\documentclass[article]{amsart}
\usepackage{amssymb,amsfonts,amsmath,amsthm}
\usepackage[all,arc]{xy}
\usepackage{enumerate}
\usepackage{mathrsfs}
\usepackage[toc,page]{appendix}
\usepackage[left=3cm, right=3cm, bottom=3cm]{geometry}
\usepackage{graphicx}
\usepackage{tabularx}
\usepackage{multirow} 
\usepackage{caption} 
\usepackage{url}
\usepackage{color}
\usepackage{tikz-cd}  
\usepackage{mathdots} 
\usepackage{romannum} 
\usepackage{hyperref} 
\usepackage{float} 

\newtheorem{thm}{Theorem}[section]

\newtheorem*{thm*}{Theorem}
\newtheorem*{cor*}{Corollary}
\newtheorem*{prop*}{Proposition}
\newtheorem{cor}[thm]{Corollary}
\newtheorem{prop}[thm]{Proposition}
\newtheorem{lem}[thm]{Lemma}

\theoremstyle{definition}
\newtheorem{defn}[thm]{Definition}

\newtheorem{notn}[thm]{Notation}
\newtheorem*{notn*}{Notation}

\theoremstyle{remark}
\newtheorem{rem}[thm]{Remark}

\newtheorem*{idea*}{Idea}

\makeatletter
\let\c@equation\c@thm
\makeatother
\numberwithin{thm}{section}
\numberwithin{equation}{section}

\bibliographystyle{plain}

\title[Meromorphic Parahoric Higgs Torsors and Filtered Stokes G-Local Systems]{Meromorphic Parahoric Higgs Torsors and Filtered Stokes G-Local Systems on Curves}

\author{Pengfei Huang and Hao Sun}

\begin{document}
\pagenumbering{arabic}
\maketitle
\begin{abstract}
In this paper, we consider the wild nonabelian Hodge correspondence for principal $G$-bundles on curves, where $G$ is a connected complex reductive group. We establish the correspondence under a ``very good" condition on the irregular type of the meromorphic $G$-connections introduced by Boalch, and thus confirm a conjecture in \cite[\S 1.5]{Boalch2018}. We first give a version of Kobayashi--Hitchin correspondence, which induces a one-to-one correspondence between stable meromorphic parahoric Higgs torsors of degree zero (Dolbeault side) and stable meromorphic parahoric connections of degree zero (de Rham side). Then, by introducing a notion of stability condition on filtered Stokes $G$-local systems, we prove a one-to-one correspondence between stable meromorphic parahoric connections of degree zero (de Rham side) and stable filtered Stokes $G$-local systems of degree zero (Betti side). When $G={\rm GL}_n(\mathbb{C})$, the main result in this paper reduces to that in \cite{BB}.
\end{abstract}
	
\flushbottom
	
	

\renewcommand{\thefootnote}{\fnsymbol{footnote}}
\footnotetext[1]{Key words: wild nonabelian Hodge correspondence, meromorphic Higgs bundle, meromorphic integrable connection, Stokes local system}
\footnotetext[2]{MSC2020 Class: 14H99, 14L15, 32Q26}

\section{Introduction}

\subsection{Background}

The nonabelian Hodge correspondence (NAHC) aims at giving a correspondence among three objects: Higgs bundles (Dolbeault side), integrable connections (de Rham side) and representations of the fundamental group (Betti side). The relation between Higgs bundles and integrable connections is established with the help of harmonic metrics, while integrable connections and representations of the fundamental group are related by the Riemann--Hilbert correspondence. The NAHC on smooth projective varieties over $\mathbb{C}$ was established by a series of seminal works \cite{Corlette,Donaldson,Hit87,Simp1988,Simp-naH,Simp Higgs Local,Simp2,Simp3}; more details from various aspects can be found in the survey papers \cite{GR2015,Huang2020,Li2019}.

In the case of punctured curves, the study of the NAHC can be distinguished into two main situations depending on the order of poles (under gauge transformations) of meromorphic connections at the punctures. In that regard, a connection is called regular singular if the order of the pole at each puncture is at most one, and irregular singular otherwise. These two algebraic conditions can be characterized analytically by the growth rate of the eigenvalues of the corresponding Higgs fields around the punctures, which leads to two analytic notions, namely, tameness and wildness respectively. By introducing tame harmonic bundles, Simpson established a one-to-one correspondence between stable filtered regular Higgs bundles of degree zero, stable filtered regular integrable connections of degree zero and stable filtered local systems of degree zero \cite{Simp}. This version of the correspondence is usually called the tame NAHC on punctured curves, and the filtered objects of Simpson are adequately characterized by a weighted flag on the fiber of the vector bundle over each puncture; this structure is commonly referred to as a parabolic structure and is due to Seshadri \cite{Seshadri}.

The problem becomes more subtle in the case of general complex reductive structure groups $G$ for the bundle, since integrable $G$-connections with regular singularities cannot be fully classified by the parabolic subgroups of $G$ as in the ${\rm GL}_n(\mathbb{C})$-case described by Simpson. By introducing parahoric objects, Boalch overcame the obstruction and gave a complete description of the Riemann--Hilbert correspondence for complex reductive groups under the tameness condition, which is called
the tame parahoric Riemann-Hilbert correspondence \cite{Bo}. This correspondence was recently extended in \cite{HKSZ} to a full one-to-one tame parahoric NAHC for any complex reductive group, as a correspondence between well-defined moduli spaces (see \cite{KSZ2parh} and \cite{HS} for the construction of the moduli spaces involved). The correspondence was built based on the analytic description of Biquard--Garc\'{i}a-Prada--Mundet i Riera \cite{BGM}, where the authors introduced $G$-Higgs bundles and $G$-connections equipped with appropriate parabolic structures and studied the correspondence within this framework.

When considering irregular singular connections for the structure group ${\rm GL}_n(\mathbb{C})$, a wild NAHC was established through combined works of Sabbah \cite{Sab} and Biquard--Boalch \cite{BB}, involving objects that are ``very good'', a condition formally formulated by Boalch in \cite[Definition 4]{Boalch2018}. This condition means that the meromorphic connections and Higgs fields have diagonal polar parts (up to gauge transformations). In further detail, Sabbah has constructed a model metric used to prove a version of the Kobayashi--Hitchin correspondence for integrable connections with irregular singularities. Biquard--Boalch then studied the Kobayashi--Hitchin correspondence for meromorphic Higgs bundles. A higher dimensional generalization was obtained by Mochizuki \cite{Mochi2011,Mochi2021}, who established a full categorical correspondence among stable good filtered Higgs bundles satisfying a certain vanishing condition, stable good filtered $\lambda$-flat bundles satisfying a vanishing condition, and good wild harmonic bundles. However, a moduli correspondence for these objects is still missing.

In the present work, we deal with the problem of generalizing the theory towards obtaining a wild NAHC for a general complex reductive group $G$. In \cite[Appendix]{Boalch2014}, Boalch showed that there is a one-to-one correspondence between meromorphic $G$-connections with irregular singularities and Stokes $G$-local systems on curves, which can be regarded as a version of Riemann--Hilbert correspondence for principal bundles in the wild case. Recently, the authors of \cite{BBMY} started from affine Springer fibers to construct the moduli stacks of $G$-Higgs bundles, $G$-connections, and Stokes $G$-local systems on $\mathbb{P}^{1}$ with level structures at 0 and $\infty$, and prescribed irregular part at $\infty$. An enhanced Riemann--Hilbert map was defined in this sense.

We summarize the known versions of the NAHC on noncompact curves as follows.

\begin{table}[h!]
\centering
\caption*{Table: NAHC on Noncompact Curves}
	\begin{tabular}{|c|c|c|}
		\hline
		\rule{0pt}{2.7ex} Structure group  & Tame NAHC & Wild NAHC  \\[0.6ex]
		\hline
		\rule{0pt}{2.7ex} $\mathrm{GL}_n(\mathbb{C})$  & Simpson \cite{Simp}  & Sabbah \cite{Sab}, Biquard--Boalch \cite{BB} \\[0.6ex]
		\hline
       \multirow{2}{*}{\rule{0pt}{3ex} $G$} & \rule{0pt}{2.6ex} Biquad--Garc\'ia-Prada--Mundet i Riera \cite{BGM}, & \\[0.5ex] 
       & Boalch \cite{Bo}, HKSZ \cite{HKSZ} &   \\[0.5ex]
		\hline
	\end{tabular}
\end{table}

\noindent In this paper, we consider the wild NAHC for principal bundles on curves, and we prove that there is a one-to-one correspondence between stable meromorphic parahoric Higgs torsors and stable filtered Stokes local systems satisfying a ``very good" condition introduced by Boalch \cite[Definition 4]{Boalch2018}, which is also referring to the unramified case in the reference \cite{BBMY}. 

\subsection{Main Result}

Let $G$ be a connected complex reductive group with a maximal torus $T$, and let $\mathfrak{g}$ and $\mathfrak{t}$ be the corresponding Lie algebras. A real weight is a co-character with coefficients in $\mathbb{R}$, which is called a weight in this paper for simplicity. Let $X$ be a smooth projective curve over $\mathbb{C}$ with a given reduced effective divisor $\boldsymbol{D}$, which is also regarded as a set of distinct points on $X$. We introduce the following notations:
\begin{itemize}
	\item $\boldsymbol\bullet=\{\bullet_x, x \in \boldsymbol D\}$ is a collection of weights labelled by points in $\boldsymbol D$, where $\bullet=\alpha,\beta,\gamma$.
	\item $\boldsymbol{Q}=\{ Q_x, x \in \boldsymbol D\}$ and $\widetilde{\boldsymbol{Q}}=\{ \widetilde{Q}_x, x \in \boldsymbol D\}$ are two collections of irregular types labelled by points in $\boldsymbol{D}$. Here an irregular type at each $x\in\boldsymbol{D}$ refers to an element 
    \begin{align*}
        Q_x\in\mathfrak{t}(K)/\mathfrak{t}(R),
    \end{align*}
    where $K:=\mathbb{C}(\!(z)\!)$ and $R:=\mathbb{C}[\![z]\!]$ under a choice of local coordinate $z$ that vanishes at $x$;
	\item $\mathcal{G}_{\boldsymbol\bullet}$ is the corresponding parahoric group scheme, where $\bullet=\alpha,\beta,\gamma$;
	\item $\varphi_{\boldsymbol\alpha},\nabla_{\boldsymbol\beta}$ are collections of elements in $\mathfrak{g}$, which are regarded as residues of Higgs fields or connections at punctures;
	\item $M_{\boldsymbol\gamma}$ is a collection of elements in $G$, which are regarded as formal monodromies of filtered Stokes $G$-local systems around punctures.
\end{itemize}
Recall that the word \emph{logahoric} is a blend of \emph{logarithmic} and \emph{parahoric}, which is introduced in \cite{Bo}. With the same idea, we introduce a new word: \emph{merohoric}, which is a blend of \emph{meromorphic} and \emph{parahoric}. A \emph{merohoric Higgs torsor} is a meromorphic parahoric Higgs torsor, and more precisely, it is a parahoric torsor equipped with a meromorphic Higgs field. Similarly, a \emph{merohoric connection} is a parahoric torsor equipped with a meromorphic connection (Definition \ref{defn_alg_meroh}). Let $X_{\boldsymbol{Q}}$ be the \emph{irregular curve} determined by the irregular types $\boldsymbol{Q}$ (see \cite[\S 8]{Boalch2014} or \S\ref{subsect_prel_stokes} for a more detailed description). In this paper, we fix irregular types $\boldsymbol{Q}$ and study the correspondence under the assumption that the \emph{merohoric connections are with irregular types $\boldsymbol{Q}$} (see Definition \ref{defn_irregular_type_parh} and Proposition \ref{prop_canonical_form_parah}).   Now we define the following categories (Dolbeault, de Rham and Betti, respectively) with the corresponding notions of stability condition defined within the paper:
\begin{itemize}
	\item $\mathcal{C}_{\rm Dol}(X,\mathcal{G}_{\boldsymbol\alpha}, \varphi_{\boldsymbol\alpha},\widetilde{\boldsymbol{Q}})$: the category of $R$-stable merohoric $\mathcal{G}_{\boldsymbol\alpha}$-Higgs torsors of degree zero on $X$ with irregular types $\widetilde{\boldsymbol{Q}}$ and the Levi factors of residues of the Higgs field at punctures are $\varphi_{\boldsymbol\alpha}$;
	\item $\mathcal{C}_{\rm dR}(X,\mathcal{G}_{\boldsymbol\beta},\nabla_{\boldsymbol\beta},\boldsymbol{Q})$: the category of $R$-stable merohoric $\mathcal{G}_{\boldsymbol\beta}$-connections of degree zero on $X$ with irregular types $\boldsymbol{Q}$ and the Levi factors of residues of the connection at punctures are $\nabla_{\boldsymbol\beta}$;
	\item $\mathcal{C}_{\rm B}(X_{\boldsymbol{Q}},G,\boldsymbol{\gamma}, M_{\boldsymbol\gamma} )$: the category of $R$-stable $\boldsymbol\gamma$-filtered Stokes $G$-local systems on $X_{\boldsymbol{Q}}$ of degree zero and the Levi factors of formal monodromies are $M_{\boldsymbol\gamma}$.  
\end{itemize}
In order to streamline notation, the local data for the objects in these categories is grouped in a table as follows:
\begin{center}
	\begin{tabular}{|c|c|c|c|}
		\hline
		\rule{0pt}{2.6ex} & \ \ \ Dolbeault\ \ \ & \ \ \ de Rham\ \ \ & \ \ \ Betti\ \ \ \\[0.5ex]
		\hline
		\rule{0pt}{2.6ex} weights & $\alpha_x$ & $\beta_x$ & $\gamma_x$ \\[0.5ex]
		\hline
		\rule{0pt}{2.6ex} residues $\backslash$ monodromies & $\varphi_{\alpha_x}$ & $\nabla_{\beta_x}$ & $M_{\gamma_x}$ \\[0.7ex]
		\hline
		\rule{0pt}{2.6ex} irregular types & $\widetilde{Q}_x$ & $Q_x$ & $Q_x$ \\[0.5ex]
		\hline
	\end{tabular}
\end{center}
Let $\nabla_{\beta_x} = s_{\beta_x} + Y_{\beta_x}$ be the Jordan decomposition, where $s_{\beta_x}$ is the semisimple part and $Y_{\beta_x}$ is the nilpotent part. We complete $Y_{\beta_x}$ into an $\mathfrak{sl}_2$-triple $(X_{\beta_x}, H_{\beta_x}, Y_{\beta_x})$. With respect to the relations in the following table,
\begin{center}
	\begin{tabular}{|c|c|c|c|}
		\hline
		\rule{0pt}{2.6ex} & Dolbeault & \ \ de Rham\ \ & Betti \\[0.5ex]
		\hline
		\rule{0pt}{2.6ex} weights & $\frac{1}{2}(s_{\beta_x} + \bar{s}_{\beta_x})$ & $\beta_x$ & $\beta_x - \frac{1}{2}(s_{\beta_x} + \bar{s}_{\beta_x})$ \\[0.7ex]
		\hline
		\rule{0pt}{2.6ex} residues $\backslash$ monodromies & $\frac{1}{2}(s_{\beta_x} - \beta_x) + (Y_{\beta_x} - H_{\beta_x} + X_{\beta_x})$ & $s_{\beta_x} + Y_{\beta_x}$ & $\exp(-2\pi \sqrt{-1}(s_{\beta_x} + Y_{\beta_x}))$ \\[0.7ex]
		\hline
		\rule{0pt}{2.6ex} irregular types & $\frac{1}{2}Q_x$ & $Q_x$ & $Q_x$ \\[0.7ex]
		\hline
	\end{tabular}
\end{center}
the main theorem in this paper is given as follows:
\begin{thm*}[Theorem \ref{thm_alg_dR_Dol} and Theorem \ref{thm_dR_Betti}]
	The categories 
	\begin{align*}
		\mathcal{C}_{\rm Dol}(X,\mathcal{G}_{\boldsymbol\alpha}, \varphi_{\boldsymbol\alpha}, \widetilde{\boldsymbol{Q}} ), \quad \mathcal{C}_{\rm dR}(X,\mathcal{G}_{\boldsymbol\beta}, \nabla_{\boldsymbol\beta}, \boldsymbol{Q} ), \quad \mathcal{C}_{\rm B}(X_{\boldsymbol{Q}},G,\boldsymbol{\gamma}, M_{\boldsymbol\gamma} )
	\end{align*}
	are equivalent.
\end{thm*}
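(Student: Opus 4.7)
The plan is to factor the three-way equivalence through the de Rham category, establishing separately the equivalences $\mathcal{C}_{\rm Dol}\simeq\mathcal{C}_{\rm dR}$ and $\mathcal{C}_{\rm dR}\simeq\mathcal{C}_{\rm B}$ via a parahoric Kobayashi--Hitchin and a parahoric wild Riemann--Hilbert correspondence, respectively. Both halves will reduce to local analysis at each puncture $x\in\boldsymbol{D}$, where the ``very good'' hypothesis on the irregular types $\boldsymbol{Q}$ allows one to work inside $\mathfrak{t}(K)/\mathfrak{t}(R)$ and so to import the ${\rm GL}_n$-results of Sabbah \cite{Sab} and Biquard--Boalch \cite{BB} and the tame parahoric results of \cite{BGM,Bo,HKSZ} essentially diagonally.

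For the Dolbeault--de Rham step, given a stable merohoric $\mathcal{G}_{\boldsymbol\alpha}$-Higgs torsor $(\mathcal{E},\theta)$ of degree zero with irregular type $\tfrac12\boldsymbol{Q}$, I would construct a harmonic metric $h$ with prescribed singular behaviour at $\boldsymbol{D}$ by combining the Sabbah--Biquard--Boalch local models at each puncture with a Donaldson--Simpson type iteration on the punctured curve, modified so that $h$ is adapted to the parahoric reduction $\mathcal{G}_{\boldsymbol\alpha}$. Assembling the Chern connection attached to $(\mathcal{E},\theta,h)$ then produces a merohoric $\mathcal{G}_{\boldsymbol\beta}$-connection with irregular type $\boldsymbol{Q}$. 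The weights and residues listed in the table emerge from the asymptotic form of $h$: the semisimple part $s_{\beta_x}$ of $\nabla_{\beta_x}$ splits into its real and imaginary parts according to $\beta_x=\tfrac12(s_{\beta_x}+\bar s_{\beta_x})+\gamma_x$, and the nilpotent part $Y_{\beta_x}$ contributes through the $\mathfrak{sl}_2$-completion $(X_{\beta_x},H_{\beta_x},Y_{\beta_x})$, exactly as in the classical tame case enlarged by the irregular term.

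For the de Rham--Betti step, I would build on Boalch's wild Riemann--Hilbert correspondence \cite[Appendix]{Boalch2014}, which identifies meromorphic $G$-connections on $X$ with irregular types $\boldsymbol{Q}$ and Stokes $G$-local systems on the irregular curve $X_{\boldsymbol{Q}}$. The parahoric enhancement is obtained by transporting the weight $\boldsymbol\beta$ together with the semisimple part of the residue into a $\boldsymbol\gamma$-filtration on the stalks of the Stokes local system at $\boldsymbol{D}$, with $\gamma_x=\beta_x-\tfrac12(s_{\beta_x}+\bar s_{\beta_x})$, and by reading off the formal monodromies as $M_{\gamma_x}=\exp(-2\pi\sqrt{-1}(s_{\beta_x}+Y_{\beta_x}))$. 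Equivalence of $R$-stability follows once I check that reductions of the merohoric $\mathcal{G}_{\boldsymbol\beta}$-connection to parahoric parabolic subgroups compatible with $\nabla_{\boldsymbol\beta}$ are in bijection with filtered reductions of the Stokes local system compatible with $M_{\boldsymbol\gamma}$, and that the corresponding parahoric degrees agree under the weight translation.

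The principal obstacle is the first step: the construction of the harmonic metric for general reductive $G$ in the presence of wild singularities together with a genuinely parahoric reduction. Sabbah's model is tailored to ${\rm GL}_n$, and while an embedding $G\hookrightarrow{\rm GL}(V)$ provides a starting point, one must argue that the resulting metric descends to a reduction of $\mathcal{G}_{\boldsymbol\alpha}$ rather than of the ambient ${\rm GL}(V)$-parahoric; this is exactly where the ``very good'' hypothesis is used, since it aligns the local models with the maximal torus $\mathfrak{t}$. A secondary technical point, which I expect to handle last, is verifying that the weight shifts recorded in the table preserve the vanishing of the total parahoric degree; this should reduce to a separate computation on each simple factor of $G$ and on the centre.
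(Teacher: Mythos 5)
Your overall architecture matches the paper's: both halves are factored through the de Rham category, with a parahoric Kobayashi--Hitchin correspondence for Dolbeault--de Rham and a filtered enhancement of Boalch's wild Riemann--Hilbert correspondence for de Rham--Betti, and your table of weight/residue/irregular-type translations is the correct one. However, the two places you defer are precisely the technical hearts of the paper's proof, and as written they are genuine gaps rather than routine verifications.

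First, for the Dolbeault--de Rham step you propose to start from an embedding $G\hookrightarrow{\rm GL}(V)$, apply Sabbah's ${\rm GL}_n$ model, and then argue that the metric descends to a reduction of $\mathcal{G}_{\boldsymbol\alpha}$; you yourself flag this descent as the principal obstacle and do not resolve it. The paper does not take this route at all: it constructs the model metric \emph{intrinsically} in $G$-terms, setting $g_0=|z|^{-\beta}(-\ln|z|^2)^{-H_\beta/2}e^{X_\beta}$ where $\beta$ is the real cocharacter weight and $(X_\beta,H_\beta,Y_\beta)$ is the $\mathfrak{sl}_2$-completion of the nilpotent part of the residue inside $\mathfrak{g}$, and then verifies acceptability of the Chern curvature and the vanishing of the pseudo-curvature by direct Lie-algebra identities (e.g.\ $e^{X_\beta}Y_\beta e^{-X_\beta}=Y_\beta+H_\beta-X_\beta$). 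This makes the very-good hypothesis do its work through the fact that the polar part $Q'(z)$ lies in $\mathfrak{t}$ and can be gauged away by $g_1=\exp\bigl(\sum_i(\tfrac{B_{-i}}{2i}z^{-i}-\tfrac{\bar B_{-i}}{2i}\bar z^{-i})\bigr)$ without disturbing the Higgs field. A descent argument from ${\rm GL}(V)$ would additionally have to show that the harmonic reduction is compatible with the $G$-structure, which is a known and nontrivial issue; your proposal does not supply this.

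Second, for the de Rham--Betti step you transport the weight into a filtration and read off the formal monodromy as $\exp(-2\pi\sqrt{-1}\nabla_{\beta_x})$, but you omit the normalization result that makes this well defined: the paper's Proposition on canonical forms for merohoric connections shows, by an induction over the grading of $\mathfrak{g}_\theta(K)$ induced by the weight, that a $G_\theta(K)$-connection with irregular type $Q$ is $G_\theta(K)$-gauge equivalent to one in canonical form with residue $A_0\in\mathfrak{p}_\theta$ commuting with the polar coefficients. This is exactly what guarantees that the formal monodromy lands in $H_x\cap P_{\gamma_x}$, so that the $\boldsymbol\gamma$-filtered Stokes local system exists, and it also underlies the bijection of compatible reductions. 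Likewise, the degree comparison is not merely a bookkeeping of weight shifts on simple factors: the paper proves $parh\deg\mathcal{V}(\varsigma,\kappa)=\deg^{\rm loc}\rho(P,\chi)$ by computing $\deg L=\langle{\rm Re}(s_{\boldsymbol\beta}),\chi\rangle$ via the residue theorem applied to the induced meromorphic connection on the line bundle $\chi_*E_\sigma$, which is where the shift $\gamma_x=\beta_x-\tfrac12(s_{\beta_x}+\bar s_{\beta_x})$ is actually forced. You should also note that invariance of a compatible parabolic (containing the fixed maximal torus at every puncture, with all the parabolics conjugate hence equal) collapses the collection $\boldsymbol{P}$ to a single $P\subseteq G$, which is what makes the stability comparison a statement about one parabolic at a time.
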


The equivalence of $\mathcal{C}_{\rm Dol}(X,\mathcal{G}_{\boldsymbol\alpha}, \varphi_{\boldsymbol\alpha}, \widetilde{\boldsymbol{Q}} )$ and $\mathcal{C}_{\rm dR}(X,\mathcal{G}_{\boldsymbol\beta}, \nabla_{\boldsymbol\beta}, \boldsymbol{Q} )$ is given in \S\ref{sect_Dol_dR} (Theorem \ref{thm_alg_dR_Dol}). This equivalence comes from a version of Kobayashi--Hitchin correspondence (Theorem \ref{thm_KH_corr}), which is proved by constructing an appropriate model metric in \S\ref{subsect_local_study}. The equivalence of $\mathcal{C}_{\rm dR}(X,\mathcal{G}_{\boldsymbol\beta}, \nabla_{\boldsymbol\beta}, \boldsymbol{Q} )$ and $\mathcal{C}_{\rm B}(X_{\boldsymbol{Q}},G,\boldsymbol{\gamma}, M_{\boldsymbol\gamma} )$ is a generalization of the correspondence between meromorphic $G$-connections and Stokes $G$-local systems (\cite[Theorem A.3 and Corollary A.4]{Boalch2014}). We generalize the correspondence to merohoric connections and filtered Stokes $G$-local systems in \S\ref{subsect_fil_stokes}. By defining an appropriate stability condition on filtered Stokes $G$-local systems (Definition \ref{defn_stab_betti}), we prove the equivalence between the de Rham and Betti categories (Theorem \ref{thm_dR_Betti}).

\subsection{Further Remarks}

Firstly, we consider the case of vector bundles, i.e. $G={\rm GL}_n(\mathbb{C})$. In \cite{BB}, Biquard--Boalch studied the wild NAHC for vector bundles on curves, where they gave a correspondence between very good meromorphic Higgs bundles and very good integrable connections with irregular singularities. In the case of ${\rm GL}_n(\mathbb{C})$, the $R$-stability condition for parahoric torsors is equivalent to the stability condition for parabolic bundles \cite[\S 5]{KSZ2parh}. Thus, the result in \S\ref{sect_Dol_dR} covers the result \cite[Theorem 6.1]{BB} in the categorical version, and the local data is exactly the same. Furthermore, as a special case, the result in this paper relates meromorphic Higgs bundles and filtered Stokes local systems on curves by taking $G={\rm GL}_n(\mathbb{C})$. 

Secondly, due to the lack of a construction of the corresponding moduli spaces, the main result in this paper is only given for categories. Despite this, the Dolbeault and de Rham moduli spaces can be constructed similarly as in \cite{KSZ2parh}. For the Betti side, Boalch constructed the moduli space of Stokes $G$-local systems as an affine GIT quotient \cite[Theorem 9.3]{Boalch2014}, of which the stable points are exactly the irreducible representations. As a special case, if we consider filtered Stokes $G$-local systems with trivial weights, then filtered Stokes $G$-local systems reduce to Stokes $G$-local systems and Boalch's result does give the construction of the corresponding Betti moduli space (Lemma \ref{lem_stokes_trivial}). However, this result does not give the construction of the Betti moduli space considered in this paper in general because an $R$-stable filtered Stokes $G$-local system of degree zero is not necessarily irreducible. Even in the tame case, a stable filtered $G$-local system may not be an irreducible representation \cite{HKSZ,Simp}. The construction of the Betti moduli spaces for filtered local systems, filtered $G$-local systems, and filtered Stokes $G$-local systems with general weights was given by the authors recently in \cite{HS}.

Thirdly, the meromorphic connections considered in this paper are supposed to be gauge equivalent to ones in canonical form (see Definition \ref{defn_cano_form} for canonical forms, and Definition \ref{defn_irregular_type}, Definition \ref{defn_irregular_type_parh} and Proposition \ref{prop_canonical_form_parah} for the setup). This is an important assumption for meromorphic connections in this paper, which satisfies the ``very good" condition in \cite[Definition 4]{Boalch2018}. Although this assumption is not very strong, there exist meromorphic connections which are not gauge equivalent to ones in canonical form. Furthermore, it is proved that any meromorphic $G$-connection is gauge equivalent to one in canonical form on an appropriate cover (see \cite[\S9.5 Theorem]{BaVa} or \cite[Theorem 4.3]{Herr}). Therefore, a complete description of the wild NAHC for principal bundles on curves is expected to be given in equivariant objects, for instance, equivariant meromorphic $G$-Higgs bundles, equivariant meromorphic $G$-connections and equivariant filtered Stokes $G$-local systems.

Finally, if the irregular types $\boldsymbol{Q}$ are trivial, i.e. $Q_x=0$ for any $x \in \boldsymbol{D}$, the main result in this paper reduces to a special tame case. More precisely, the special case requires that logarithmic connections $d+B(z)\frac{dz}{z}$ are gauge equivalent to ones of the form $d+ B_0 \frac{dz}{z}$, where $B(z) \in \mathfrak{g}(\mathbb{C}[[z]])$ and $B_0 \in \mathfrak{g}$ because we only study the unramified case in this paper (see Remark \ref{rem_diff_tame_wild}). In general, given a logarithmic connection $d+B(z)\frac{dz}{z}$, it is gauge equivalent to one $d + B_0 \frac{dw}{w}$ on an appropriate cover $z=w^d$ (see \cite{Herr} for instance). Moreover, Boalch introduced parahoric objects to give a precise description of this phenomenon and establish the Riemann--Hilbert correspondence in the tame case \cite{Bo}.

\vspace{2mm}

\textbf{Acknowledgments}.
We would like to thank G. Kydonakis and L. Zhao for many helpful discussions. We thank G. Kydonakis for his very helpful comments and suggestions on a very early version of this paper. We thank P. Boalch and C. Simpson for enlightening communications. We thank the anonymous referee for very helpful suggestions and comments. P. Huang would like to express the deep gratitude to the Institut des Hautes Études Scientifiques in Paris, France for its kind hospitality during the preparation of this paper, and acknowledges funding by the Deutsche Forschungsgemeinschaft (DFG, German Research Foundation) – Project-ID 281071066 – TRR 191. H. Sun is partially supported by National
Key R$\&$D Program of China (No. 2022YFA1006600) and NSFC (No. 12101243).

\vspace{2mm}

\section{Preliminaries}\label{sect_Pre}

In this section, we briefly review the main algebraic objects and analytic objects studied in this paper. For the algebraic part, we introduce the terminology of \emph{merohoric $\mathcal{G}_{\boldsymbol\theta}$-Higgs torsors} (resp. \emph{merohoric $\mathcal{G}_{\boldsymbol\theta}$-connections}), where \emph{merohoric} is a blend word of \emph{meromorphic} and \emph{parahoric} and $\mathcal{G}_{\boldsymbol\theta}$ is a parahoric group scheme, and introduce stability conditions (see Definitions \ref{defn_alg_meroh} and \ref{defn_alg_stab}). We refer the reader to \cite{BS,HKSZ,KSZ2parh} for a detailed overview of the parahoric notions used in this paper. For the analytic part, we follow the setup in \cite[\S 4]{HKSZ} and give the definition of \emph{metrized $(\boldsymbol\theta,\boldsymbol{n})$-adapted $G$-Higgs bundles} (resp. \emph{$G$-connections}) (Definition \ref{defn_ana_adapted}), which is regarded as the definition of \emph{wild $G$-Higgs bundles} (resp. \emph{$G$-connections}), and the corresponding stability conditions (Definition \ref{defn_ana_stab}). At the end of this section, we introduce two functors
\begin{align*}
\Xi_{\rm Dol} & : \mathcal{C}_{\rm Dol}(X_{\boldsymbol{D}},G,\boldsymbol\theta,\boldsymbol{n}) \rightarrow \mathcal{C}_{\rm Dol}(X,\mathcal{G}_{\boldsymbol\theta},\boldsymbol{n}) \\
\Xi_{\rm dR} & : \mathcal{C}_{\rm dR}(X_{\boldsymbol{D}},G,\boldsymbol\theta,\boldsymbol{n}) \rightarrow \mathcal{C}_{\rm dR}(X,\mathcal{G}_{\boldsymbol\theta},\boldsymbol{n})  ,
\end{align*}
which relate analytic objects and algebraic objects for Higgs bundles (\emph{Dolbeault}) and connections (\emph{de Rham}).

\subsection{Algebraic Part}
Let $G$ be a connected complex reductive group with a maximal torus $T$. Denote respectively by $\mathfrak{g}$ and $\mathfrak{t}$ the corresponding Lie algebras. Let $\mathcal{R}$ be the set of roots. A \emph{real weight} $\theta$ is an element in ${\rm Hom}(\mathbb{G}_m,T) \otimes_{\mathbb{Z}} \mathbb{R}$ such that $r(\theta) \leq 1$ for any root $r \in \mathcal{R}$, where $r(\theta):=\langle \theta, r \rangle$ is the natural pairing of characters and co-characters. In this paper, we call it a \emph{weight} for simplicity. When $r(\theta)<1$ for any $r \in \mathcal{R}$, it is called a \emph{small weight}. Given a weight $\theta$, we define the integer for each root $r \in \mathcal{R}$
\begin{align*}
m_r(\theta):=\lceil -r(\theta) \rceil,
\end{align*}
where $\lceil \cdot \rceil$ is the ceiling function. Then, the \emph{parahoric subgroup $G_{\theta}(K)$} of $G(K)$ is defined as
\begin{align*}
G_{\theta}(K):=\langle T(R), U_r(z^{m_r(\theta)}R), r \in \mathcal{R} \rangle,
\end{align*}
where $R:=\mathbb{C}[\![z]\!]$ and $K:=\mathbb{C}(\!(z)\!)$. An equivalent definition of $G_\theta(K)$ is
\begin{align*}
G_{\theta}(K) =\{g(z) \in G(K) \text{ }|\text{ } z^{\theta} g(z) z^{-\theta} \text{ has a limit as $z \rightarrow 0$}\},
\end{align*}
where $z^{\theta}:=e^{\theta \ln z}$ (see \cite[Lemma 2.2]{HKSZ}). Denote by $\mathcal{G}_{\theta}$ the corresponding group scheme of $G_{\theta}(K)$, which is called the \emph{parahoric group scheme}. Furthermore, 
\begin{align*}
\mathfrak{g}_\theta(K) : =\{ g(z) \in \mathfrak{g}(K) \text{ } | \text{ } z^{\theta} g(z) z^{-\theta} \text{ has a limit as $z \rightarrow 0$}\}
\end{align*}
is regarded as the Lie algebra of $G_\theta(K)$.

Now let $X$ be a smooth projective curve over $\mathbb{C}$, and let $\boldsymbol{D}$ be a fixed reduced effective divisor on $X$, which is also regarded as a collection of distinct points. Denote by $X_{\boldsymbol D}: = X \backslash \boldsymbol{D}$ the punctured (noncompact) curve. Let ${\boldsymbol\theta}=\{\theta_x, x \in \boldsymbol{D}\}$ be a collection of weights indexed by points in $\boldsymbol D$. By gluing the following local data
\begin{align*}
\mathcal{G}_{\boldsymbol\theta}|_{X_{\boldsymbol D}} \cong G \times X_{\boldsymbol D}, \quad \mathcal{G}_{\boldsymbol\theta}|_{\mathbb{D}_x} \cong \mathcal{G}_{\theta_x}, x \in \boldsymbol{D},
\end{align*}
where $\mathbb{D}_x$ is a formal disc around $x$, we obtain a smooth group scheme $\mathcal{G}_{\boldsymbol\theta}$ over $X$. This group scheme $\mathcal{G}_{\boldsymbol\theta}$ is called a \emph{parahoric (Bruhat--Tits) group scheme}. 

Recall that a parabolic subgroup $P$ of $G$ with Lie algebra $\mathfrak{p}$ is determined by a subset of roots $\mathcal{R}_P \subseteq \mathcal{R}$. We define the corresponding parahoric subgroup as follows
\begin{align*}
P_\theta(K) := \langle T(R), U_r(z^{m_r(\theta)}R), r \in \mathcal{R}_P \rangle.
\end{align*}
Denote by $\mathcal{P}_\theta$ the corresponding group scheme. For the global picture, we define the group scheme $\mathcal{P}_{\boldsymbol\theta}$ over $X$ by gluing the local data similarly
\begin{align*}
\mathcal{P}_{\boldsymbol\theta}|_{\mathbb{D}_x} \cong P \times X_{\boldsymbol D}, \quad \mathcal{P}_{\boldsymbol\theta}|_{\mathbb{D}_x} \cong \mathcal{P}_{\theta_x}, x \in \boldsymbol D.
\end{align*}

\begin{notn}\label{notn_parab_levi}
	Given a weight $\theta$, a natural parabolic subgroup is determined which we denote by $P_\theta \subseteq G$. In this paper, we use the notation $L_\theta$ for the corresponding Levi subgroup. The corresponding Lie algebras are denoted by $\mathfrak{p}_{\theta}$ and $\mathfrak{l}_\theta$, respectively. For the construction of $P_\theta(K)$ above, we fix both a weight $\theta$ and a parabolic subgroup $P$, and then define a parahoric subgroup $P_\theta(K) \subseteq P(K)$ and its group scheme $\mathcal{P}_\theta$. If we denote the Levi subgroup (resp. Lie subalgebra) of $P$ (resp. $\mathfrak{p}$) by $L$ (resp. $\mathfrak{l}$), we can define the corresponding parahoric subgroup (resp. Lie subalgebra) $L_\theta(K)$ (resp. $\mathfrak{l}_\theta(K)$) and group scheme $\mathcal{L}_\theta$ similarly. 
\end{notn}

Now let $\mathcal{E}$ be a parahoric $\mathcal{G}_{\boldsymbol\theta}$-torsor on $X$. Let $\varsigma \in \Gamma(X,\mathcal{E}/\mathcal{P}_{\boldsymbol\theta})$ be a section, which is called a \emph{reduction of structure group}. Denote by $\mathcal{E}_{\varsigma}$ the pullback of the diagram
\begin{center}
	\begin{tikzcd}
	\mathcal{E}_{\varsigma} \arrow[r, dotted] \arrow[d, dotted] & \mathcal{E} \arrow[d]\\
	X \arrow[r, "\varsigma"]  & \mathcal{E}/\mathcal{P}_{\boldsymbol\theta}.
	\end{tikzcd}
\end{center}
Then, the pullback $\mathcal{E}_{\varsigma}$ is a parahoric $\mathcal{P}_{\boldsymbol\theta}$-torsor on $X$. Let $\kappa: \mathcal{P}_{\boldsymbol\theta} \rightarrow \mathbb{G}_m$ be a morphism of group schemes over $X$, and we call it a \emph{character} of $\mathcal{P}_{\boldsymbol\theta}$. Furthermore, there is a one-to-one correspondence between characters of $\mathcal{P}_{\boldsymbol\theta}$ and characters of $P$ \cite[Lemma 4.2]{KSZ2parh}:
\begin{align*}
{\rm Hom}(\mathcal{P}_{\boldsymbol\theta},\mathbb{G}_m) = {\rm Hom}(P,\mathbb{C}^*).
\end{align*}
Denote by $\chi: P \rightarrow \mathbb{C}^*$ the corresponding character of $\kappa$. We define the following inner product
\begin{align*}
\langle \theta, \kappa \rangle := \langle \theta, \chi \rangle,
\end{align*}
for any weight $\theta$. 

We return to the parahoric $\mathcal{P}_{\boldsymbol\theta}$-torsor $\mathcal{E}_{\varsigma}$. Given a character $\kappa$, there is a line bundle $\kappa_* (\mathcal{E}_{\varsigma})$ on $X$, and denote it by $L(\varsigma,\kappa)$. As a special case, let $P=G$. If $\varsigma: X \rightarrow \mathcal{E} / \mathcal{G}_{\boldsymbol\theta}$ is trivial, then the pushforward $L(\kappa):=\kappa_* \mathcal{E}$ directly gives a line bundle on $X$. In \cite{KSZ2parh}, the authors introduced a notion of (algebraic) degree of parahoric torsors as follows:
\begin{defn}\label{defn_alg_deg}
	The \emph{parahoric degree} of a parahoric $\mathcal{G}_{\boldsymbol\theta}$-torsor $\mathcal{E}$ with respect to a given reduction $\varsigma$ and a character $\kappa$ is defined as follows
	\begin{align*}
	parh\deg \mathcal{E}(\varsigma,\kappa)=\deg L(\varsigma,\kappa)+ \langle  \boldsymbol\theta, \kappa \rangle,
	\end{align*}
	where $\langle  \boldsymbol\theta, \kappa \rangle:=\sum_{x\in \boldsymbol{D}} \langle  \theta_x, \kappa \rangle$. If $\varsigma: X \rightarrow \mathcal{E} / \mathcal{G}_{\boldsymbol\theta}$ is trivial, we define
	\begin{align*}
	parh\deg \mathcal{E}(\kappa)=\deg L(\kappa)+ \langle  \boldsymbol\theta, \kappa \rangle.
	\end{align*}
	A parahoric $\mathcal{G}_{\boldsymbol\theta}$-torsor $\mathcal{E}$ is of \emph{degree zero}, if we have $parh\deg \mathcal{E}(\kappa)=0$ for any character $\kappa$.
\end{defn}
The definition of a $G$-bundle of \emph{degree zero} comes from the \emph{$R_\mu$-stability condition}, and we refer the reader to \cite[\S 5]{KSZ2parh} and \cite[\S 4.2]{BGM} for more details. Following Ramanathan's definition of stability conditions \cite{Rama1975,Rama19961,Rama19962} for principle bundles, a notion of stability condition is given for parahoric torsors, which is called the $R$-stability condition. We refer the reader to \cite{HKSZ,KSZ2parh} for more details.

Now we move to Higgs bundles and connections. Let $\boldsymbol{n}:=\{n_x, x \in \boldsymbol{D}\}$ be a set of positive integers, and denote by $\boldsymbol{nD}:=\sum_{x \in \boldsymbol{D}} n_x \cdot x$ the corresponding divisor.
\begin{defn}\label{defn_alg_meroh}
	A \emph{meromorphic parahoric $\mathcal{G}_{\boldsymbol\theta}$-Higgs torsor} on $X$ is a pair $(\mathcal{E},\varphi)$, where
	\begin{itemize}
		\item $\mathcal{E}$ is a parahoric $\mathcal{G}_{\boldsymbol\theta}$-torsor on $X$;
		\item $\varphi \in H^0(X, \mathcal{E}(\mathfrak{g}) \otimes K_X(\boldsymbol{nD}))$ is a section, where $\mathcal{E}(\mathfrak{g})$ is the adjoint bundle.
	\end{itemize}
	The section $\varphi$ is called a \emph{meromorphic Higgs field}. Such a pair is called a \emph{merohoric $\mathcal{G}_{\boldsymbol\theta}$-Higgs torsor} for simplicity.
	
	Let $\mathcal{V}$ be a parahoric $\mathcal{G}_{\boldsymbol\theta}$-torsor. A \emph{meromorphic connection} on $\mathcal{V}$ is an integrable meromorphic connection $\nabla:\mathcal{O}_{\mathcal{V}} \rightarrow \mathcal{O}_{\mathcal{V}} \otimes K_X(\boldsymbol{nD})$ compatible with the co-multiplication map of $\mathcal{V}$. Such a pair $(\mathcal{V},\nabla)$ is called a \emph{merohoric $\mathcal{G}_{\boldsymbol\theta}$-connection}. We refer the reader to \cite[Appendix]{ChenZhu} and \cite[Appendix]{HKSZ} for more details about the definition of connections on torsors.
\end{defn}

\begin{rem}\label{rem_mero_ana_to_alg}
	Let $\varphi$ be a meromorphic Higgs field on $\mathcal{E}$. For a puncture $x \in \boldsymbol{D}$, the restriction of $\varphi$ to $\mathbb{D}_x$ can be regarded as an element in $\mathfrak{g}_{\theta_x}(K)dz/z^{n_x}$, and we use the same notation $\varphi(z)$ for the corresponding element. Thus, by the definition of $\mathfrak{g}_{\theta_x}(K)$, it is easy to check that
	\begin{align*}
	z^{\theta_x} \cdot (z^{n_x} \varphi(z)) \cdot z^{-\theta_x} \text{ is bounded as $z$ approaches zero}.
	\end{align*}
	Furthermore, if an element $\varphi(z) \in \mathfrak{g}(K)dz$ satisfies this condition, then $\varphi(z) \in \mathfrak{g}_{\theta_x}(K)dz/z^{n_x}$.
\end{rem}
Before we state the stability condition for merohoric Higgs torsors and merohoric connections, we first define \emph{compatible reductions of structure group}. Given a merohoric $\mathcal{G}_{\boldsymbol\theta}$-Higgs torsor $(\mathcal{E},\varphi)$, a reduction of structure group $\varsigma$ is said to be \emph{$\varphi$-compatible}, if $\varphi$ induces a section (Higgs field) $\varphi_\varsigma : X \rightarrow \mathcal{E}_\varsigma(\mathfrak{p}) \otimes K(\boldsymbol{nD})$. Similarly, let $(\mathcal{V},\nabla)$ be a merohoric $\mathcal{G}_{\boldsymbol\theta}$-connection, a reduction $\varsigma$ is said to be \emph{$\nabla$-compatible}, if $\nabla$ induces a meromorphic connection $\nabla_\varsigma$ on $\mathcal{V}_\varsigma$. If there is no ambiguity, we say \emph{compatible reductions of structure group} for simplicity.

\begin{defn}\label{defn_alg_stab}
	A merohoric $\mathcal{G}_{\boldsymbol\theta}$-Higgs torsor $(\mathcal{E},\varphi)$ is called \emph{$R$-stable} (resp. \emph{$R$-semistable}), if for
	\begin{itemize}
		\item any proper parabolic group $P \subseteq G$,
		\item any compatible reduction of structure group $\varsigma: X \rightarrow \mathcal{E}/\mathcal{P}_{\boldsymbol\theta}$,
		\item any nontrivial anti-dominant character $\kappa: \mathcal{P}_{\boldsymbol\theta} \rightarrow \mathbb{G}_m$, which is trivial on the center of $\mathcal{P}_{\boldsymbol\theta}$,
	\end{itemize}
	one has
	\begin{align*}
	parh\deg \mathcal{E}(\varsigma,\kappa) > 0, \quad (\text{resp. } \geq 0).
	\end{align*}
	
	A merohoric $\mathcal{G}_{\boldsymbol\theta}$-connection $(\mathcal{V},\nabla)$ is called \emph{$R$-stable} (resp. \emph{$R$-semistable}), if for
	\begin{itemize}
		\item any proper parabolic group $P \subseteq G$,
		\item any compatible reduction of structure group $\varsigma: X \rightarrow \mathcal{V}/\mathcal{P}_{\boldsymbol\theta}$,
		\item any nontrivial anti-dominant character $\kappa: \mathcal{P}_{\boldsymbol\theta} \rightarrow \mathbb{G}_m$, which is trivial on the center of $\mathcal{P}_{\boldsymbol\theta}$,
	\end{itemize}
	one has
	\begin{align*}
	parh\deg \mathcal{V}(\varsigma,\kappa) > 0, \quad (\text{resp. } \geq 0).
	\end{align*}	
\end{defn}

\subsection{Analytic Part}\label{subsect_prel_ana}

In this subsection, we review the definition and some properties of metrized $G$-Higgs bundles and metrized $G$-connections on $X_{\boldsymbol D}$. We refer the reader to \cite[\S 3 and \S 4]{HKSZ} for more details. Let $(E,\partial''_E,h)$ be a metrized $G$-bundle on $X_{\boldsymbol D}$, where $\partial''_E$ is a holomorphic structure of $E$ and $h$ is a hermitian metric on $E$. In this paper, we consider a special class of hermitian metrics, which are called \emph{adapted metrics}. 

\begin{defn}[Definition 2.12 in \cite{BGM} or Definition 3.2 in \cite{HKSZ}]
	If a metric $h$ takes the form $h_0 \cdot |z|^{2\theta} e^c$ such that
	\begin{itemize}
		\item $h_0$ is the standard constant metric;
		\item $\theta \in {\rm Hom}(\mathbb{G}_m,T) \otimes_{\mathbb{Z}} \mathbb{R}$ is a weight;
		\item ${\rm Ad}(|z|^{2 \theta_x})c = o (\ln |z|)$ and the curvature $F_h$ of the associated connection to $h$ is in $L^1$,
	\end{itemize}
	then $h$ is called a \emph{$\theta$-adapted metric} or \emph{adapted metric} for simplicity.
\end{defn}
\noindent A metric is always assumed to be an adapted metric from now on. Now let $P$ be a parabolic subgroup of $G$. We take a holomorphic section $\sigma: X_{\boldsymbol D} \rightarrow E/P$, and obtain a $P$-bundle $E_\sigma$ from the diagram
\begin{center}
	\begin{tikzcd}
	E_{\sigma} \arrow[r, dotted] \arrow[d, dotted] & E \arrow[d]\\
	X_{\boldsymbol D} \arrow[r, "\sigma"]  & E/P.
	\end{tikzcd}
\end{center}
Furthermore, a holomorphic structure $\partial''_{E_\sigma}$ and a metric $h_\sigma$ are also induced from the diagram, and thus, we obtain a metrized $P$-bundle $(E_\sigma,\partial''_{E_\sigma}, h_\sigma)$. 
\begin{defn}\label{defn_ana_degree}
	Taking a character $\chi: P \rightarrow \mathbb{C}^*$, the \emph{analytic degree} of $(E,\partial''_E,h)$ is defined as
	\begin{equation*}
	\deg^{\rm an} E(h, \sigma, \chi):= \frac{\sqrt{-1}}{2\pi} \int_{X_{\boldsymbol{D}}} \chi_* F_{h_\sigma}.
	\end{equation*}
	If $\sigma: X \rightarrow E/G$ is trivial, we define
	\begin{align*}
	\deg^{\rm an} E(h, \chi):=\frac{\sqrt{-1}}{2\pi} \int_{X_{\boldsymbol{D}}} \chi_* F_{h}.
	\end{align*}
	A metrized $G$-bundle  $(E,\partial''_E,h)$ is of \emph{degree zero} if we have $\deg^{\rm an} E(h, \chi)=0$ for any character $\chi: P \rightarrow \mathbb{C}^*$.
\end{defn}

Now we move to Higgs bundles and connections.
\begin{defn}\label{defn_met_Higgs_and_conn}
	A \emph{metrized $G$-Higgs bundle} on $X_{\boldsymbol D}$ is a tuple $(E,\partial''_E, \phi,h)$, where
	\begin{itemize}
		\item $(E,\partial''_E,h)$ is a metrized $G$-bundle on $X_{\boldsymbol D}$,
		\item $\phi$ is a Higgs field, that is, a holomorphic section of $E(\mathfrak{g}) \otimes K_{X_{\boldsymbol D}}$, where $E(\mathfrak{g})$ is the adjoint bundle.
	\end{itemize}
	A \emph{metrized $G$-connection} on $X_{\boldsymbol D}$ is a triple $(E,D,h)$, where
	\begin{itemize}
		\item $E$ is a $G$-bundle on $X_{\boldsymbol D}$ equipped with an adapted metric $h$,
		\item $D$ is an integrable connection on $E$.
	\end{itemize}
\end{defn}

Given a metrized $G$-Higgs bundle $(E,\partial''_E,\phi,h)$, we obtain a connection 
\begin{equation}\label{induced_flat_connect}
D=\partial'_h +\partial''_E + \phi + \phi^*,
\end{equation}
where $\partial'_h+\partial''_E$ is the Chern connection which preserves the metric $h$ and $\phi^*$ is adjoint to $\phi$ with respect to the metric $h$. Denote by $F_D:= D^2$ the curvature of $D$. Furthermore, we define 
\begin{align*}
D'':=\partial''_E + \phi,
\end{align*}
and denote by
\begin{align*}
F_{D''}:=(D'')^2
\end{align*}
the \emph{pseudo-curvature} of $D''$. Clearly, $F_{D''}=0$. In conclusion, we obtain a tuple $(E,D,h)$ from a metrized $G$-Higgs bundle. If the curvature $F_D = 0$, then $(E,D,h)$ is a metrized $G$-connection.

For the other direction, given a metrized $G$-connection $(E,D,h)$, we write $D=d'+d''$ as a sum of operators of type $(1,0)$ and $(0,1)$. In this way, $d''$ gives a new holomorphic structure on $E$, and denote by $V:=(E,d'')$ the corresponding holomorphic $G$-bundle. The $(1,0)$-part $d'$ gives a connection on the holomorphic $G$-bundle $V$. Then, the $G$-connection $(E,D)$ can be regarded as $(V,d')$, which is an algebraic (holomorphic) description. Thus, a metrized $G$-connection $(E,D,h)$ is also regarded as a triple $(V,d',h)$. With respect to the given metric $h$, we can find a $(1,0)$-operator $\delta'$ and a $(0,1)$-operator $\delta''$ such that both $d'+\delta''$ and $\delta'+d''$ preserve the metric $h$. Then, we define
\begin{equation}\label{eq_metric_Higgs}
\partial'':=\frac{1}{2}(d''+\delta''), \quad \phi:=\frac{1}{2}(d'-\delta').
\end{equation}
Therefore, we obtain a tuple $(E,\partial'',\phi,h)$. If the pseudo-curvature $F_{D''}=0$, where $D'':=\partial''+\phi$, then the tuple $(E,\partial'',\phi,h)$ is a metrized $G$-Higgs bundle. Moreover, we have the following relation with respect to the above data
\begin{equation}
F_D = F_h + 2 F_{D''} + [\phi, \phi^*].
\end{equation}
This implies that if $D$ is an integrable connection and the pseudo-curvature $F_{D''}$ vanishes, then the metrized $G$-bundle $(E,\partial'',h)$ is of degree zero. 

Let $\boldsymbol\theta$ be a collection of weights and $\boldsymbol{n}$ a collection of positive integers indexed by points in $\boldsymbol D$. We next introduce a special case of $G$-Higgs bundles and $G$-connections.
\begin{defn}\label{defn_ana_adapted}
	A metrized $G$-Higgs bundle $(E,\partial''_E,\phi,h)$ is called \emph{$(\boldsymbol\theta,\boldsymbol{n})$-adapted} if for each puncture $x \in \boldsymbol D$, we have
	\begin{align*}
	z^{\theta_x} \cdot (z^{n_x} \phi(z)) \cdot z^{-\theta_x} \text{ is bounded as $z$ approaches zero},
	\end{align*}
	where we take a local trivialization on a neighborhood of $x$ such that $\partial''_E=\bar{\partial}_z$ is the $(0,1)$-part of the standard connection with $z$ the local coordinate vanishing at $x$. If the above condition for Higgs fields is satisfied for $n_x=1$, then the Higgs field is called \emph{tame}, otherwise, it is called \emph{wild}.  
	
	A metrized $G$-connection $(V,d',h)$ (or $(E,D,h)$) is called \emph{$(\boldsymbol\theta,\boldsymbol{n})$-adapted} if for each puncture $x \in \boldsymbol{D}$, we have
	\begin{align*}
	z^{\theta_x} \cdot (z^{n_x} d') \cdot z^{-\theta_x} \text{ is bounded as $z$ approaches zero},
	\end{align*}
	where we take a local trivialization on a neighborhood of $x$ such that $\partial''_E=\bar{\partial}_z$ is the $(0,1)$-part of the standard connection with $z$ the local coordinate vanishing at $x$. If the above condition for connections is satisfied for $n_x=1$, then it is called \emph{tame}, otherwise, it is called \emph{wild}.
	
	Moreover, a holomorphic reduction of structure group $\sigma: X_{\boldsymbol D} \rightarrow E/P$ is called \emph{$\boldsymbol\theta$-adapted}, if we have
	\begin{align*}
	z^{\theta_x} \cdot \sigma(z) \cdot z^{-\theta_x} \text{ is bounded as $z$ approaches zero}
	\end{align*}
	for each puncture $x \in \boldsymbol{D}$. 
\end{defn}

Similar to merohoric Higgs torsors and connections, we can define \emph{compatible $\boldsymbol\theta$-adapted holomorphic reductions of structure group} for metrized Higgs bundles and connections (see \cite[\S 4]{HKSZ}). Then, the stability condition for the analytic side is given as follows:
\begin{defn}\label{defn_ana_stab}
	A metrized $(\boldsymbol\theta,\boldsymbol{n})$-adapted $G$-Higgs bundle $(E,\partial''_E, \phi,h)$ is \emph{$R_h$-stable} (resp. \emph{$R_h$-semistable}), if for
	\begin{itemize}
		\item any proper parabolic group $P \subseteq G$,
		\item any compatible $\boldsymbol\theta$-adapted holomorphic reduction of structure group $\sigma: X_{\boldsymbol D} \rightarrow E/P$,
		\item any nontrivial anti-dominant character $\chi: P \rightarrow \mathbb{C}^*$, which is trivial on the center of $P$,
	\end{itemize}
	one has
	\begin{align*}
	\deg^{\rm an} E (h, \sigma, \chi) > 0, \quad (\text{resp. } \geq 0).
	\end{align*}
	
	A metrized $(\boldsymbol\theta,\boldsymbol{n})$-adapted $G$-connection $(V,d',h)$ is \emph{$R_h$-stable} (resp. \emph{$R_h$-semistable}), if for
	\begin{itemize}
		\item any proper parabolic group $P \subseteq G$,
		\item any compatible $\boldsymbol\theta$-adapted holomorphic reduction of structure group $\sigma: X_{\boldsymbol D} \rightarrow V/P$,
		\item any nontrivial anti-dominant character $\chi: P \rightarrow \mathbb{C}^*$, which is trivial on the center of $P$,
	\end{itemize}
	one has
	\begin{align*}
	\deg^{\rm an} V (h, \sigma,\chi) > 0, \quad (\text{resp. } \geq 0).
	\end{align*}
\end{defn}

A notion of polystability condition can be given for $G$-bundles. Since the polystability condition only appears in the statement of a version of Kobayashi--Hitchin correspondence (Theorem \ref{thm_KH_corr}) in this paper, we refer the reader to \cite[\S 3.8]{HKSZ} for more details. 	

\subsection{Correspondence}\label{subsect_func_Xi}

We fix some notations for categories:
\begin{itemize}
    \item $\mathcal{C}(X_{\boldsymbol D},G,\boldsymbol\theta)$: the category of metrized $G$-bundles $(E,\partial''_E,h)$ with $\boldsymbol\theta$-adapted metrics $h$ on $X_{\boldsymbol{D}}$;
    \item $\mathcal{C}(X,\mathcal{G}_{\boldsymbol\theta})$: the category of parahoric $\mathcal{G}_{\boldsymbol\theta}$-torsors $\mathcal{E}$ on $X$;
	\item $\mathcal{C}_{\rm Higgs}(X,\mathcal{G}_{\boldsymbol\theta},\boldsymbol{n})$: the category of merohoric $\mathcal{G}_{\boldsymbol\theta}$-Higgs torsors $(\mathcal{E},\varphi)$ on $X$, where $\varphi: X \rightarrow \mathcal{E}(\mathfrak{g}) \otimes K_X(\boldsymbol{nD})$ is a meromorphic Higgs field;
	\item $\mathcal{C}_{\rm Conn}(X,\mathcal{G}_{\boldsymbol\theta},\boldsymbol{n})$: the category of merohoric $\mathcal{G}_{\boldsymbol\theta}$-connections $(\mathcal{V},\nabla)$ on $X$, where $\nabla: \mathcal{O}_{\mathcal{V}} \rightarrow \mathcal{O}_{\mathcal{V}} \otimes K_X(\boldsymbol{nD})$ is a meromorphic connection;
	\item $\mathcal{C}_{\rm Higgs}(X_{\boldsymbol{D}},G,\boldsymbol\theta,\boldsymbol{n})$: the category of metrized  $(\boldsymbol\theta,\boldsymbol{n})$-adapted $G$-Higgs bundles on $X_{\boldsymbol{D}}$;
	\item $\mathcal{C}_{\rm Conn}(X_{\boldsymbol{D}},G,\boldsymbol\theta,\boldsymbol{n})$: the category of metrized  $(\boldsymbol\theta,\boldsymbol{n})$-adapted $G$-connections on $X_{\boldsymbol{D}}$;
\end{itemize}

In this subsection, we define two functors relating analytic and algebraic objects:
\begin{align*}
\Xi_{\rm Higgs} & : \mathcal{C}_{\rm Higgs}(X_{\boldsymbol{D}},G,\boldsymbol\theta,\boldsymbol{n}) \rightarrow \mathcal{C}_{\rm Higgs}(X,\mathcal{G}_{\boldsymbol\theta},\boldsymbol{n}), \\
\Xi_{\rm Conn} & : \mathcal{C}_{\rm Conn}(X_{\boldsymbol{D}},G,\boldsymbol\theta,\boldsymbol{n}) \rightarrow \mathcal{C}_{\rm Conn}(X,\mathcal{G}_{\boldsymbol\theta},\boldsymbol{n})  . 
\end{align*}
We first review the construction of the functor
\begin{align*}
    \Xi: \mathcal{C}(X_{\boldsymbol D},G,\boldsymbol\theta) \rightarrow \mathcal{C}(X,\mathcal{G}_{\boldsymbol\theta})
\end{align*}
given in \cite[\S 3.1]{HKSZ}. Given a metrized $G$-bundle $(E,\partial''_E,h)$ with $h$ a $\boldsymbol{\theta}$-adapted metrix on $X_{\boldsymbol{D}}$, we want to extend it to a parahoric $\mathcal{G}_{\boldsymbol\theta}$-torsor on $X$. Since such parahoric $\mathcal{G}_{\boldsymbol\theta}$-torsors on $X$ are classified by
\begin{align*}
	\left[ \prod_{x \in \boldsymbol D} G_{\theta_x}(K) \backslash \prod_{x \in \boldsymbol D} G(K) / G(\mathbb{C}[X_{\boldsymbol D}])  \right],
\end{align*}
the extension is not unique in general. We pick the identity element in the double coset and denote by $\mathcal{E}$ the corresponding parahoric $\mathcal{G}_{\boldsymbol\theta}$-torsor, of which the restriction $\mathcal{E}|_{X_{\boldsymbol{D}}}$ is isomorphic to the holomorphic $G$-bundle $E$. With respect to this particular choice of extension, we get the functor $\Xi$
\begin{align*}
    \Xi: \mathcal{C}(X_{\boldsymbol D},G,\boldsymbol\theta) \rightarrow \mathcal{C}(X,\mathcal{G}_{\boldsymbol\theta}).
\end{align*}
When $G={\rm GL}_n(\mathbb{C})$, this construction is exactly the same as in \cite[\S 10]{Simp1988} and \cite[\S 3]{Simp}. 

Now we generalize the functor $\Xi$ to Higgs bundles and connections. We take meromorphic Higgs bundles as an example. Let $(E,\partial''_E,\phi,h)$ be a metrized $(\boldsymbol\theta,\boldsymbol{n})$-adapted $G$-Higgs bundle on $X_{\boldsymbol{D}}$, and denote by $\mathcal{E}$ the corresponding parahoric $\mathcal{G}_{\boldsymbol\theta}$-torsor of $(E,\partial''_E,h)$ under the functor $\Xi$. Note that the meromorphic Higgs field $\phi$ satisfies the condition 
\begin{align*}
	z^{\theta_x} \cdot (z^{n_x} \phi(z)) \cdot z^{-\theta_x} \text{ is bounded as $z$ approaches zero},
\end{align*}
for each puncture $x \in \boldsymbol{D}$, which implies that $z^{\theta_x} \cdot (z^{n_x} \phi(z)) \cdot z^{-\theta_x}$ is in $\mathfrak{g}(R)dz$. With the help of Remark \ref{rem_mero_ana_to_alg}, the meromorphic Higgs field $\phi(z)$ is a well-defined element in $\mathfrak{g}_{\theta_x}(K) dz/ z^{n_x}$. Thus, the Higgs field $\phi$ can be extended to a meromorphic Higgs field $\varphi \in H^0(X,\mathcal{E}(\mathfrak{g}) \otimes K_X(\boldsymbol{nD}))$ naturally. In sum, given a metrized $(\boldsymbol\theta,\boldsymbol{n})$-adapted $G$-Higgs bundle $(E,\partial''_E,\phi,h)$, we construct a merohoric $\mathcal{G}_{\boldsymbol\theta}$-Higgs torsor $(\mathcal{E},\varphi)$ on $X$, and then, we obtain a functor and denote it by
\begin{align*}
    \Xi_{\rm Higgs} & : \mathcal{C}_{\rm Higgs}(X_{\boldsymbol{D}},G,\boldsymbol\theta,\boldsymbol{n}) \rightarrow \mathcal{C}_{\rm Higgs}(X,\mathcal{G}_{\boldsymbol\theta},\boldsymbol{n}).
\end{align*}
The functor for meromorphic connections
\begin{align*}
    \Xi_{\rm Conn}: \mathcal{C}_{\rm Conn}(X_{\boldsymbol{D}},G,\boldsymbol\theta,\boldsymbol{n}) \rightarrow \mathcal{C}_{\rm Conn}(X,\mathcal{G}_{\boldsymbol\theta},\boldsymbol{n}).
\end{align*}
can be defined in a similar way. 

These two functors $\Xi_{\rm Higgs}$ and $\Xi_{\rm Conn}$ not only build a bridge between analytic and algebraic objects, but also preserve the degree and stability conditions. 
\begin{prop}\label{prop_ana=alg Higgs and conn}
	A metrized $(\boldsymbol\theta,\boldsymbol{n})$-adapted $G$-Higgs bundle is $R_h$-stable (resp. $R_h$-semistable) if and only if the corresponding merohoric $\mathcal{G}_{\boldsymbol\theta}$-Higgs torsor is $R$-stable (resp. $R$-semistable). Similarly, a metrized $(\boldsymbol\theta,\boldsymbol{n})$-adapted $G$-connection is $R_h$-stable (resp. $R_h$-semistable) if and only if the corresponding merohoric $\mathcal{G}_{\boldsymbol\theta}$-connection is $R$-stable (resp. $R$-semistable).
\end{prop}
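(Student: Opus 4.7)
The plan is to reduce the statement to the bundle-level result already proved in \cite{HKSZ}, and then verify that passing from the analytic side to the algebraic side through $\Xi_{\rm Higgs}$ (resp.\ $\Xi_{\rm Conn}$) preserves exactly the three pieces of data used to formulate stability: the reductions of structure group, the compatibility condition with $\phi$ (resp.\ $D$), and the degree attached to a character. Since both $R$-stability and $R_h$-stability are tested against \emph{the same} combinatorial data (proper parabolic $P \subseteq G$, nontrivial anti-dominant character $\chi$ of $P$, which corresponds bijectively to characters $\kappa$ of $\mathcal{P}_{\boldsymbol\theta}$ by \cite[Lemma 4.2]{KSZ2parh}), the proposition follows once these three matches are established uniformly in $P$ and $\chi$.

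First I would set up the bijection between $\boldsymbol\theta$-adapted holomorphic reductions on the analytic side and reductions to $\mathcal{P}_{\boldsymbol\theta}$ on the algebraic side. Given a metrized $(\boldsymbol\theta,\boldsymbol{n})$-adapted Higgs bundle $(E,\partial''_E,\phi,h)$, its underlying metrized $G$-bundle already corresponds under $\Xi$ to $\mathcal{E}$. By the parahoric bundle case treated in \cite{HKSZ}, any $\boldsymbol\theta$-adapted holomorphic reduction $\sigma: X_{\boldsymbol D}\to E/P$ extends uniquely to an algebraic section $\varsigma: X\to \mathcal{E}/\mathcal{P}_{\boldsymbol\theta}$, and conversely; the boundedness condition $z^{\theta_x}\sigma(z)z^{-\theta_x}=O(1)$ is precisely what makes $\sigma$ take values, at each puncture, in the $P_{\theta_x}(K)$-orbit representing the parahoric reduction.

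Next I would verify that this bijection preserves the compatibility condition. On the analytic side, $\sigma$ is $\phi$-compatible iff $\phi$ is a section of $E_\sigma(\mathfrak p)\otimes K_{X_{\boldsymbol D}}$. Under $\Xi_{\rm Higgs}$, the local expression of $\phi$ at each $x$ defines, by Remark \ref{rem_mero_ana_to_alg}, an element of $\mathfrak g_{\theta_x}(K)\,dz/z^{n_x}$; the fact that this local expression lies in $\mathfrak p(K)\,dz$ (from $\phi$-compatibility on $X_{\boldsymbol D}$) combined with the boundedness condition for $\sigma$ forces it to lie in $\mathfrak p_{\theta_x}(K)\,dz/z^{n_x}$, which is exactly the condition that $\varphi$ is a section of $\mathcal E_\varsigma(\mathfrak p)\otimes K_X(\boldsymbol{nD})$. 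The reverse direction is immediate by restriction to $X_{\boldsymbol D}$. The argument for meromorphic connections is identical, replacing $\mathfrak p_{\theta_x}(K)\,dz/z^{n_x}$ by the corresponding space of parahoric $\mathfrak p$-valued connection forms and appealing to the analogous description in Definition \ref{defn_ana_adapted}.

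Finally I would establish the degree matching
\begin{equation*}
\deg^{\rm an} E(h,\sigma,\chi) \;=\; parh\deg\mathcal E(\varsigma,\kappa).
\end{equation*}
This is the main technical point, and it is independent of whether one has a Higgs field or a connection; it compares $\frac{\sqrt{-1}}{2\pi}\int_{X_{\boldsymbol D}}\chi_* F_{h_\sigma}$ with $\deg L(\varsigma,\kappa)+\langle\boldsymbol\theta,\kappa\rangle$. This identity is the Chern--Weil/Simpson-type computation carried out at the level of metrized parahoric bundles in \cite[\S\,3]{HKSZ}, and since neither $\phi$ nor $D$ enters the definition of either degree, the result transports verbatim to our setting once the reduction $\varsigma$ corresponding to $\sigma$ has been fixed as above. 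The only step that requires genuine care is showing that the integral converges and equals the stated algebraic expression with the correct parabolic weight contribution $\langle\boldsymbol\theta,\kappa\rangle$; this is where the $\boldsymbol\theta$-adapted hypothesis on $h$ and $\sigma$ (controlling the asymptotics of $F_{h_\sigma}$ near each puncture) does all the work. With the three matches in place, the equivalences of $R_h$-stability and $R$-stability (and their semistable variants) follow by testing on the same $(P,\sigma\leftrightarrow\varsigma,\chi\leftrightarrow\kappa)$ data.
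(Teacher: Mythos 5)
Your proposal is correct and follows essentially the same route as the paper: the paper likewise reduces the statement to the bijections between characters (\cite[Lemma 4.2]{KSZ2parh}), between compatible $\boldsymbol\theta$-adapted reductions and compatible parahoric reductions (\cite[Lemma 3.14 and Lemma 5.6]{HKSZ}), and the degree identity $\deg^{\rm an} E(h,\sigma,\chi)=parh\deg\mathcal{E}(\varsigma,\kappa)$ (\cite[Proposition 3.15]{HKSZ}), then concludes from Definitions \ref{defn_alg_stab} and \ref{defn_ana_stab}. Your extra verification that $\phi$-compatibility transfers through $\Xi_{\rm Higgs}$ via Remark \ref{rem_mero_ana_to_alg} is a step the paper absorbs into its citations, but it is the same argument, not a different one.
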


\begin{proof}
We only give the proof for Higgs bundles. Given an object $(E,\partial''_E,\phi,h) \in \mathcal{C}_{\rm Higgs}(X_{\boldsymbol{D}},G,\boldsymbol\theta,\boldsymbol{n})$, let $(\mathcal{E},\varphi)$ be the corresponding merohoric $\mathcal{G}_{\boldsymbol\theta}$-Higgs torsor on $X$. There is a one-to-one correspondence between
\begin{itemize}
	\item characters $\chi$ of $P$ and characters $\kappa$ of $\mathcal{P}_{\boldsymbol\theta}$ (\cite[Lemma 4.2]{KSZ2parh});
	\item compatible $\boldsymbol\theta$-adapted holomorphic reductions $\sigma: X_{\boldsymbol D} \rightarrow E/P$ and compatible reductions $\varsigma: X \rightarrow \mathcal{E}/\mathcal{P}_{\boldsymbol\theta}$ (\cite[Lemma 3.14 and Lemma 5.6]{HKSZ}).
\end{itemize}
Under the above one-to-one correspondence, we have
\begin{align*}
\deg^{\rm an} E(h, \sigma, \chi)=parh\deg \mathcal{E}(\varsigma,\kappa),
\end{align*}
and in other words, the analytic degree equals to the algebraic (parahoric) degree (\cite[Proposition 3.15]{HKSZ}). With respect to the above discussion, the proposition follows directly from Definition \ref{defn_alg_stab} and Definition \ref{defn_ana_stab}.
\end{proof}

At the end of this section, we define subcategories $\mathcal{C}_{\rm Dol} \subseteq \mathcal{C}_{\rm Higgs}$ (resp. $\mathcal{C}_{\rm dR} \subseteq \mathcal{C}_{\rm Conn}$) for Higgs bundles (resp. integrable connections) by adding two additional conditions: $R$-stable and degree zero. Then, we obtain two well-defined functors by Proposition \ref{prop_ana=alg Higgs and conn}:
\begin{align*}
\Xi_{\rm Dol} & : \mathcal{C}_{\rm Dol}(X_{\boldsymbol{D}},G,\boldsymbol\theta,\boldsymbol{n}) \rightarrow \mathcal{C}_{\rm Dol}(X,\mathcal{G}_{\boldsymbol\theta},\boldsymbol{n}), \\
\Xi_{\rm dR} & : \mathcal{C}_{\rm dR}(X_{\boldsymbol{D}},G,\boldsymbol\theta,\boldsymbol{n}) \rightarrow \mathcal{C}_{\rm dR}(X,\mathcal{G}_{\boldsymbol\theta},\boldsymbol{n}). 
\end{align*}

\section{Dolbeault Category vs. de Rham Category}\label{sect_Dol_dR}

In this section, we introduce the definition of \emph{irregular type} for meromorphic connections and meromorphic Higgs fields (Definition \ref{defn_irregular_type_parh}). With respect to the additional data of irregular types, we construct a model metric for a meromorphic connection and then give a version of Kobayashi--Hitchin correspondence (Theorem \ref{thm_KH_corr}) following the proof in \cite[\S 6]{HKSZ}. In this way, we obtain a correspondence between $R$-stable merohoric Higgs torsors of degree zero and $R$-stable merohoric connections of degree zero (Theorem \ref{thm_alg_dR_Dol}).

\subsection{Canonical Forms and Irregular Types}\label{subsect_cano_form}
In this subsection, we sometimes add subscripts $R_z:=\mathbb{C}[\![z]\!]$ and $K_z:=\mathbb{C}(\!(z)\!)$ to emphasize the choice of local coordinate. With the same notation as in \S\ref{sect_Pre}, an element $B(z) \in \mathfrak{g}(K)$ can be written in the following form
\begin{align*}
B(z) = \sum_{i \geq -n} B_i z^i = B_{-n} z^{-n} + \dots + B_{-1} z^{-1} + \cdots ,
\end{align*}
where $B_{-n} \neq 0$. Denote by $d+ B(z)\frac{dz}{z}$ the corresponding \emph{$G$-connection}, where $d$ is the exterior differential operator. In this paper, we focus on \emph{meromorphic $G$-connections}, i.e. $n \geq 0$. Given $g \in G(K)$, there is a natural action of $g$ on $d + B(z)\frac{dz}{z}$, i.e.
\begin{align*}
g \circ (d + B(z) \frac{dz}{z}) = d + \left( - dg \cdot g^{-1} + {\rm Ad}(g) \cdot B(z) \frac{dz}{z} \right),
\end{align*}
which is called the \emph{gauge action}. Two $G$-connections $d+A(z)\frac{dz}{z}$ and $d + B(z)\frac{dz}{z}$ are \emph{gauge equivalent} if there exists $g \in G(K)$ such that $g \circ (d+A(z)\frac{dz}{z}) = d+ B(z)\frac{dz}{z}$.

\begin{defn}[Canonical form]\label{defn_cano_form}
	A meromorphic connection $d+B(z)\frac{dz}{z}$, where
	\begin{align*}
	B(z) = B_{-n} z^{-n} + \cdots + B_{0} + \cdots ,
	\end{align*}
	is said to be in \emph{canonical form} if
	\begin{itemize}
		\item the holomorphic part of $d+ B(z)\frac{dz}{z}$ vanishes, i.e. $B(z)=B_{-n} z^{-n} + \dots + B_{0}$,
		\item $B_i$ is a semisimple element in $\mathfrak{g}$ for $i \leq -1$,
		\item the elements $B_{-n},\dots,B_0$ are pairwise commutative.
	\end{itemize}
\end{defn}

Note that if $B(z)$ is in canonical form, the coefficients $B_i$ for $i \leq -1$ can be supposed to be in $\mathfrak{t}$ under conjugation. The following important result says that given any meromorphic $G$-connection, there exists a cover, expressed in local coordinates as $z=w^d$ for a positive integer $d$, such that the connection (in variable $w$) is gauge equivalent to one in canonical form:
\begin{thm}[Theorem in {\cite[\S9.5]{BaVa}} or  Theorem 4.3 in \cite{Herr}]\label{thm_canonical_form}
	Let $G$ be a connected complex reductive group. Let $d + B(z)\frac{dz}{z}$ be a meromorphic $G$-connection. There exists a positive integer $d$ and an element $g \in G(K_w)$, where $z=w^d$, such that $g \circ (d+ B(w) \frac{d(w^d)}{w^d} )$ is in canonical form.
\end{thm}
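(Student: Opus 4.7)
My plan is to prove this by the classical Babbit--Varadarajan / Levelt--Turrittin strategy, running a double induction on the pole order $n$ and on the semisimple rank of $G$. The guiding principle is that at each stage the leading polar coefficient $B_{-n}$ should be ``diagonalized'' after a gauge transformation and (possibly) a ramified cover, enabling a reduction either to a lower pole order or to a smaller reductive group, namely the centralizer of the semisimple part of $B_{-n}$.

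I would start with the base case $n=0$, a logarithmic connection $d+B(z)\tfrac{dz}{z}$ with $B(z)=B_0+B_1 z+B_2 z^2+\cdots$. Writing an ansatz $g=\exp(z C_1)\exp(z^2 C_2)\cdots\in G(R)$ and comparing coefficients, one obtains a recursive system of the form $(\mathrm{ad}(B_0)-k\cdot \mathrm{id})C_k = F_k(B_0,B_1,\ldots,B_k,C_1,\ldots,C_{k-1})$. This is solvable whenever $k\in\mathbb{Z}_{>0}$ is not an eigenvalue of $\mathrm{ad}(B_0)$ on $\mathfrak{g}$. A ramified cover $z=w^d$ with $d$ sufficiently divisible shifts the eigenvalues by $1/d$ and removes all such resonances, after which the holomorphic tail $B_1 w+\cdots$ can be eliminated.

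For the inductive step $n\ge 1$, I decompose $B_{-n}=S+N$ (Jordan decomposition in $\mathfrak{g}$). After conjugation, I may assume $S\in\mathfrak{t}$, and after a cover $z=w^d$ the eigenvalues of $\mathrm{ad}(S)$ may be assumed to be ``generic enough'' (in particular free of unwanted coincidences and of positive integer shifts). Using the root-space decomposition $\mathfrak{g}=\mathfrak{l}\oplus\bigoplus_{\alpha}\mathfrak{g}_\alpha$ with $\mathfrak{l}=\mathfrak{z}_\mathfrak{g}(S)$, I construct a gauge transformation order-by-order that kills the components of the remaining $B_i$ lying in $\bigoplus_\alpha \mathfrak{g}_\alpha$, reducing the structure group to the Levi $L=Z_G(S)$, on which I invoke the inductive hypothesis (since $\dim L<\dim G$ when $S\ne 0$). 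When $S=0$ and $B_{-n}$ is purely nilpotent, a shearing substitution adapted to an $\mathfrak{sl}_2$-triple containing $N$ reduces the effective pole order of the nilpotent contribution, and after finitely many such reductions either the nilpotent part disappears or a nontrivial semisimple part emerges on a deeper cover, letting the previous case apply.

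The hardest part will be the bookkeeping: (i) ensuring that each gauge transformation used to treat the leading coefficient does \emph{not} reintroduce bad terms at lower polar orders already brought into canonical form, and (ii) guaranteeing that the surviving coefficients $B_{-n},\ldots,B_{-1}$ are pairwise commuting and semisimple. The way around (i) is to perform the reductions in order of decreasing pole order and to stay within the iterated centralizer, so that previously cleaned coefficients are not touched; issue (ii) is then automatic, since everything at the final stage lies in a common maximal torus of the nested centralizers. Throughout, using that $G$ is connected reductive is essential, because centralizers of semisimple elements are again connected reductive Levi subgroups, keeping the induction hypothesis applicable at each step.
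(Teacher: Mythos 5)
First, a point of reference: the paper does not actually prove Theorem \ref{thm_canonical_form} --- it quotes it from Babbitt--Varadarajan and Herrero and only sketches the argument in Remark \ref{rem_cano_form_proof}. At the level of architecture your plan (induction on pole order and on the group, Jordan decomposition of the leading coefficient, reduction to the centralizer of the semisimple part, shearing via an $\mathfrak{sl}_2$-triple in the nilpotent case, with ramified covers entering along the way) is exactly the strategy of those references and of the paper's sketch, so the overall route is the right one.

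There is, however, a concrete error in how you use the cover, and it already breaks your base case. You claim that a ramified cover $z=w^{d}$ ``shifts the eigenvalues by $1/d$ and removes all such resonances.'' It does not: under $z=w^{d}$ the connection becomes $d+ d\,B(w^{d})\frac{dw}{w}$, so the residue is $dB_{0}$ and the eigenvalues of $\mathrm{ad}$ of the residue are multiplied by $d$, while the holomorphic tail now sits only in degrees divisible by $d$; the solvability condition $(\mathrm{ad}(dB_{0})-jd)C_{jd}=F_{jd}$ reduces to exactly the original condition $j\notin\mathrm{spec}(\mathrm{ad}(B_{0}))$, so every resonance survives the pullback (indeed the set of resonant eigenvalues only enlarges, from $\mathbb{Z}_{>0}$ to $\frac{1}{d}\mathbb{Z}_{>0}$). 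The correct device for killing resonances is not a cover but a twist by $z^{\theta}$ for a cocharacter $\theta$: this shifts the eigenvalue of $\mathrm{ad}(B_{0})$ on each root space $\mathfrak{g}_{r}$ by the integer $r(\theta)$ and lets you normalize the eigenvalues into a non-resonant window --- this is precisely the ``parahoric'' phenomenon the paper emphasizes and the role of $g=z^{\theta}$ in step~0 of Remark \ref{rem_cano_form_proof}. The same objection applies to your later assertion that after a cover ``the eigenvalues of $\mathrm{ad}(S)$ may be assumed to be generic enough.'' The cover is genuinely needed for a different reason: the shearing transformation $z^{\theta_{1}}$ used to desingularize a nilpotent leading coefficient has \emph{rational} $\theta_{1}$, and one passes to $z=w^{d}$ only to make it single-valued. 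Finally, a smaller bookkeeping gap: your dimension induction via $\dim Z_{G}(S)<\dim G$ fails when $S$ is a nonzero \emph{central} semisimple element, since then $Z_{G}(S)=G$; in that case the leading term must instead be peeled off (it commutes with everything) and the induction run on a finer invariant, such as the number of polar coefficients not yet normalized.
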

This result for $G={\rm GL}_n(\mathbb{C})$ is well-known especially in the area of ordinary differential equations (see \cite{Malg,Wasow,vanSin} for instance).  

\begin{rem}\label{rem_cano_form_proof}
	We would like to briefly state the idea of the proof of the theorem. Given a meromorphic $G$-connection $d + B(z)\frac{dz}{z}$, we suppose that the meromorphic part $B(z)$ is not nilpotent, otherwise we can take an appropriate element $g=z^{\theta}$ such that $g \circ (d + B(z)\frac{dz}{z})$ is a logarithmic connection, where $\theta$ is a co-character. 
	
	Under this assumption, although the meromorphic part of $B(z)$ is not nilpotent, the leading coefficient $B_{-n}$ might be nilpotent. Thus, the first step is to find an appropriate element $g_1$ such that the first nontrivial coefficient of 
	\begin{align*}
	d+ C(z)\frac{dz}{z}=g_1 \circ (d+ B(z)\frac{dz}{z}) 
	\end{align*}
	is semisimple. In this step, $g_1$ is usually in the form $z^{\theta_1}$, where $\theta_1$ is a rational weight. This actually implies that the element $g$ we find in the theorem is in $G(K_w)$ with $z=w^d$ for some positive integer $d$. 
	
	Next, the leading coefficient $C_{-n_c}$ of $C(z)$ is not regular semisimple in general. Despite this, we can find an appropriate element $g_2$ and obtain a new connection 
	\begin{align*}
	d+ D(z)\frac{dz}{z}=g_2 \circ (d+ C(z)\frac{dz}{z}) 
	\end{align*}
	such that if a nilpotent element in $\mathfrak{g}$ commutes with the leading coefficient $D_{-n_d}$ of $D(z)$, then it could only be (part of) the coefficient of $z^{i}$ in $D(z)$ with $i \geq 0$. 
	
	For the last step, we can choose $g_3 \in G(K_z)$ such that $g_3 \circ (d + D(z)\frac{dz}{z})$ is in canonical form. In this step, $g_3$ is a product of elements in the form $\exp(X_k z^k)$, where $X_k \in \mathfrak{g}$. Moreover, for the last step, $g_3$ is actually in $G(R)$.
\end{rem}

Based on the result of Theorem \ref{thm_canonical_form}, we introduce the definition of \emph{irregular types} for meromorphic $G$-connections.

\begin{defn}\label{defn_irregular_type}
For a meromorphic $G$-connection and a choice of local coordinate $z$ at the points in $\boldsymbol{D}$, an \emph{irregular type} is an element $Q(z) \in \mathfrak{t}(K)/\mathfrak{t}(R)$ (or $Q$ for simplicity), and such an element is usually regarded as a $\mathfrak{t}$-valued meromorphic function (modulo holomorphic terms). 
	
Let $d + B(z)\frac{dz}{z}$ be a meromorphic $G$-connection. Given an element $Q \in \mathfrak{t}(K)/\mathfrak{t}(R)$, if $d + B(z)\frac{dz}{z}$ is equivalent to a connection in the form
\begin{align*}
d + dQ + (\text{terms with degree $\geq 0$})\frac{dz}{z}
\end{align*}
under the (gauge) action of $G(R)$, then we say that the connection $d + B(z)\frac{dz}{z}$ is with \emph{irregular type $Q$}.
	
Let $A(z) \in \mathfrak{g}(K)$ be an element and take $A(z)\frac{dz}{z}$ as a meromorphic $G$-Higgs field. If  $A(z)\frac{dz}{z}$ is equivalent to a Higgs field in the form
\begin{align*}
dQ + (\text{terms with degree $\geq 0$ }) \frac{dz}{z}
\end{align*} 
under the (adjoint) action of $G(R)$, then we say that the Higgs field $A(z)\frac{dz}{z}$ is with \emph{irregular type $Q$}. 
\end{defn}

In this paper, we always assume that the irregular type is nontrivial because when it is trivial, the study of the regular singular connections goes back to the tame case \cite{Bo}. Now we move to the parahoric side and give the definition of \emph{irregular types} for parahoric objects considered in this paper. 

\begin{defn}\label{defn_irregular_type_parh}
	Given an irregular type $Q \in \mathfrak{t}(K)/\mathfrak{t}(R)$ and an element $B(z) \in \mathfrak{g}_\theta(K)/z^n$, the meromorphic $G_\theta(K)$-connection $d+B(z)\frac{dz}{z}$ is with \emph{irregular type $Q$}, if it is equivalent to a connection in the form
	\begin{align*}
	d + dQ + (\sum_{i=0}^{\infty} B'_i z^i) \frac{dz}{z}
	\end{align*}
	under the (gauge) action of $G_\theta(K)$, where $\sum_{i=0}^{\infty} B'_i z^i \in \mathfrak{g}_\theta(K)$. Similarly, a $G_\theta(K)$-Higgs field $A(z)\frac{dz}{z}$ is with \emph{irregular type $Q$} if it is equivalent to one in the form
	\begin{align*}
	    dQ + (\sum_{i=0}^{\infty} A'_i z^i) \frac{dz}{z}
	\end{align*}
	under the (adjoint) action of $G_\theta(K)$, where $\sum_{i=0}^{\infty} A'_i z^i \in \mathfrak{g}_\theta(K)$.
\end{defn}

\begin{rem}
In this definition, given an irregular type $Q$, a meromorphic $G_\theta(K)$-connection is of irregular type $Q$ if it is gauge equivalent (under $G_\theta(K)$) to one in the desired form $d + dQ + (\text{terms with degree} \geq 0) \frac{dz}{z}$. However, there exist meromorphic $G_\theta(K)$-connections which cannot be transformed into the form $d + dQ + (\text{terms with degree} \geq 0) \frac{dz}{z}$ for any irregular type $Q$. The statement is also true for meromorphic $G$-connections, and moreover, this is one of the reasons why one has to find an appropriate cover to transform meromorphic connections to the ones in canonical form. 
\end{rem}

\begin{prop}\label{prop_canonical_form_parah}
    Given a weight $\theta$ and an irregular type $Q$, let $d + B(z)\frac{dz}{z}$ be a meromorphic $G_\theta(K)$-connection with irregular type $Q$. There exists an element $g \in G_\theta(K)$ such that the meromorphic $G_\theta(K)$-connection
    \begin{align*}
        d + A(z)\frac{dz}{z} : = g \circ (d + B(z)\frac{dz}{z})
    \end{align*}
    is in canonical form. Moreover, $A_0 \in \mathfrak{p}_\theta$ (see Notation \ref{notn_parab_levi}).
\end{prop}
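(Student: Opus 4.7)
The plan is to construct the gauge $g \in G_\theta(K)$ as a composition $g_2 g_1$, where $g_1$ is obtained directly from the definition of the irregular type and $g_2$ is the parahoric incarnation of the final step of the classical Balser--Varadarajan / Levelt--Turrittin normal form procedure; the assertion $A_0 \in \mathfrak{p}_\theta$ will then follow automatically from the parahoric nature of $g$.

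By Definition \ref{defn_irregular_type_parh}, the irregular-type hypothesis furnishes $g_1 \in G_\theta(K)$ with
\[
g_1 \circ \Bigl(d + B(z)\tfrac{dz}{z}\Bigr) \;=\; d + dQ + \Bigl(\sum_{i \ge 0} C_i z^i\Bigr)\tfrac{dz}{z},
\]
where $C(z) := \sum_{i \ge 0} C_i z^i \in \mathfrak{g}_\theta(K)$. Writing $dQ = \sum_{i \le -1} iQ_i z^i \tfrac{dz}{z}$, the negative-order coefficients already lie in $\mathfrak{t}$ and are automatically semisimple and pairwise commutative, so only the holomorphic tail $C(z)$ remains to be normalized. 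Following the strategy of the last step in Remark \ref{rem_cano_form_proof}, I would proceed inductively on $k \ge 1$, applying gauges $\exp(Y_k z^k)$ with $Y_k \in \mathfrak{g}$ chosen to eliminate $C_k$: a direct expansion shows that the coefficient at position $z^k \tfrac{dz}{z}$ is shifted by $-kY_k + [Y_k, C_0]$, while corrections $i[Y_k, Q_i]$ appear at positions $z^{k+i}\tfrac{dz}{z}$ for $-m \le i \le -1$. Decomposing $\mathfrak{g} = \mathfrak{l}_Q \oplus \mathfrak{l}_Q^\perp$ with $\mathfrak{l}_Q$ the joint centralizer of the $Q_i$'s, and using the invertibility of $\operatorname{ad}(Q_{-m})$ on $\mathfrak{l}_Q^\perp$, one can solve the resulting linear system for $Y_k$ order by order; a final conjugation inside the Levi $L_Q$ then normalizes the remaining constant term. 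The formal infinite product $g_2 := \prod_{k} \exp(Y_k z^k)$ converges in $G_\theta(K)$, and $g_2 \circ \bigl(d + dQ + C(z)\tfrac{dz}{z}\bigr) = d + dQ + A_0 \tfrac{dz}{z}$ with $A_0 \in \mathfrak{l}_Q$, which is the required canonical form.

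Setting $g := g_2 g_1 \in G_\theta(K)$, we obtain $A(z) \in \mathfrak{g}_\theta(K)/z^n$, hence $z^\theta A_0 z^{-\theta}$ is bounded as $z \to 0$. Decomposing $A_0 = A_{0,\mathfrak{t}} + \sum_r A_{0,r}$ along the root spaces and using $\operatorname{Ad}(z^\theta) A_{0,r} = z^{r(\theta)} A_{0,r}$ forces $r(\theta) \ge 0$ for every root $r$ with $A_{0,r} \ne 0$, i.e., $A_0 \in \mathfrak{p}_\theta$. The main technical obstacle is the inductive verification that each $Y_k$ can be chosen inside $\mathfrak{g}_\theta(K)$ (so that $\exp(Y_k z^k) \in G_\theta(K)$) and that the iterative corrections do not destroy the irregular type $Q$; both points follow from tracking the root-space constraints imposed by $C(z) \in \mathfrak{g}_\theta(K)$ against the parahoric exponents $m_r(\theta) = \lceil -r(\theta) \rceil$. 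This is essentially a parahoric refinement of the classical argument in \cite{BaVa, Herr}.
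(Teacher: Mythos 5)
Your overall strategy (reduce to the form $d+dQ+C(z)\frac{dz}{z}$ via Definition \ref{defn_irregular_type_parh}, then normalize the holomorphic tail by an infinite product of exponentials) is the right shape, but the recursion at its core does not close up. By your own expansion, gauging by $\exp(Y_kz^k)$ modifies position $z^k\frac{dz}{z}$ through the operator $-k+\operatorname{ad}(C_0)$ and simultaneously dumps corrections $i[Y_k,Q_i]$ at the \emph{lower} positions $z^{k+i}\frac{dz}{z}$, $-m\le i\le-1$ --- for small $k$ these land at nonpositive or negative powers, i.e.\ they destroy the already-normalized constant term and even the irregular type. The invertibility of $\operatorname{ad}(Q_{-m})$ on $\mathfrak{l}_Q^{\perp}$ that you invoke is not the operator appearing at position $k$ in your scheme; to use it one must gauge by $\exp(Yz^{k+m})$ so that the bracket with the most singular coefficient is what lands at position $k$ and \emph{all} other corrections land strictly higher. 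The two points you defer to ``tracking root-space constraints'' --- that each $Y_k$ can be taken so that $\exp(Y_kz^k)\in G_\theta(K)$, and that the corrections do not spoil what was already achieved --- are precisely the content of the proposition, and a plain induction on the $z$-degree cannot deliver them. The paper's proof resolves exactly this by introducing the combined grading $\mathfrak{g}(K)_\mu$ ($\mu=i+\lambda$, with $\lambda$ the $\operatorname{ad}(\theta)$-eigenvalue), inducting on the finitely many graded pieces $\mu_0<\mu_1<\cdots$ of $\mathfrak{g}_\theta(K)=\prod_{\mu\ge0}\mathfrak{g}(K)_\mu$, and gauging by $\exp(X_{\mu_k}(z)z^n)$ so that $[X_{\mu_k}(z)z^n,B_{-n}z^{-n}]$ hits degree $\mu_k$ while the derivative term and all other brackets land in $\mathfrak{g}(K)_{>\mu_k}$; membership of the gauge in $G_\theta(K)$ is then automatic from the grading. (Note also that the paper's target is to make each graded piece commute with the polar coefficients, handled one coefficient $B_{-n},B_{-(n-1)},\dots$ at a time, rather than to annihilate the tail outright.)

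Your final step is also a non sequitur: from $A(z)\in\mathfrak{g}_\theta(K)/z^n$ you can only conclude that $z^{\theta}\bigl(z^nA_0\bigr)z^{-\theta}$ is controlled, i.e.\ $r(\theta)\ge-n$ for the roots occurring in $A_0$, which is far weaker than $A_0\in\mathfrak{p}_\theta$. The correct source of $A_0\in\mathfrak{p}_\theta$ is that Definition \ref{defn_irregular_type_parh} places the holomorphic tail $\sum_{i\ge0}B_i'z^i$ in $\mathfrak{g}_\theta(K)$, which forces $r(\theta)\ge0$ on the roots occurring in the constant term, and the graded gauge transformations preserve this; your weaker deduction does not recover it.
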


\begin{proof}
    We extend the argument of \cite[Theorem 6]{Bo} and \cite[Theorem 4.3]{Herr} to the merohoric case. We first briefly review the graded structure of $\mathfrak{g}$ and $\mathfrak{g}_\theta (K)$ given by $\theta$. For each real number $\lambda$, denote by $\mathfrak{g}_\lambda \subseteq \mathfrak{g}$ the $\lambda$-eigenspace of the adjoint operator ${\rm ad}(\theta)$. Clearly, we have
    \begin{align*}
        \mathfrak{g} = \bigoplus_{\lambda \in \mathbb{R}} \mathfrak{g}_{\lambda} \quad \text{ and } \quad [\mathfrak{g}_{\lambda_1}, \mathfrak{g}_{\lambda_2}] \subseteq \mathfrak{g}_{\lambda_1 + \lambda_2}.
    \end{align*}
    Since $\mathfrak{g}$ is a finite dimensional vector space, there are only finitely many nontrivial subspaces $\mathfrak{g}_\lambda$. In a similar way, we define
    \begin{align*}
        \mathfrak{g}(K)_\mu := \{ \sum X_i z^i \in \mathfrak{g}(K) \text{ } | \text{ } X_i \in \mathfrak{g}_\lambda, \text{ where $\lambda+i=\mu$} \}
    \end{align*}
    for all $\mu \in \mathbb{R}$, which is a finite dimensional vector space. We also have
    \begin{align*}
        [\mathfrak{g}(K)_{\mu_1},\mathfrak{g}(K)_{\mu_2}] \subseteq \mathfrak{g}(K)_{\mu_1 + \mu_2}\quad \text{ and } \quad \prod_{\mu \in \mathbb{R}_{\geq 0}} \mathfrak{g}(K)_{\mu} = \mathfrak{g}_\theta(K).
    \end{align*}
    Let $0 = \mu_0 < \mu_1 < \cdots$ be the sequence of real numbers such that $\mathfrak{g}(K)_{\mu_n}$ is nontrivial. We also introduce the notation $\mathfrak{g}(K)_{ > \mu'} := \prod_{\mu > \mu'} \mathfrak{g}(K)_{\mu}$, and then $\mathfrak{g}_\theta(K)=\mathfrak{g}(K)_{\geq 0}$.
    
    Now we go back to the proof of the proposition and suppose that the meromorphic connection $d + B(z) \frac{dz}{z}$ is exactly in the form
    \begin{align*}
        B(z)\frac{dz}{z} = dQ + B_+(z) \frac{dz}{z},
    \end{align*}
    where $B_{i} \in \mathfrak{t}$ for $i \leq -1$ and $B_+(z):=\sum_{i=0}^{\infty} B_i z^i \in \mathfrak{g}_\theta(K)$. Denote by $B_{\mu_i}(z)$ the corresponding piece of $B_+(z)$ in $\mathfrak{g}(K)_{\mu_i}$. Thus, $B_+(z) = \sum_{i \geq 0} B_{\mu_i}(z)$ and each summand $B_{\mu_i}(z) = \sum B_{\mu_i j} z^j$ is a finite sum. It is enough to show that each piece $B_{\mu_i}(z)$ of $B_+(z)$ can be transformed to one commuting with $B_{-i}$ for all $i \leq -1$. If the piece $B_{\mu_i}(z)$ satisfies the condition, we say that the $\mu_i$-piece $B_{\mu_i}(z)$ is normalized.
    
    We start from the base step $i=0$. We claim that there exists an element $X_{\mu_0}(z) \in \mathfrak{g}(K)_{\mu_0}$ such that $B_{\mu_0}(z) + [X_{\mu_0}(z),B_{-n}]$ lies in the zero eigenspace of ${\rm ad}(B_{-n})$. This claim follows from the proof of \cite[Theorem 6]{Bo}. Then, the gauge action of
    ${\rm exp}(X_{\mu_0}(z)z^n)$ on $d + B(z)\frac{dz}{z}$ satisfies the following properties
    \begin{itemize}
        \item it does not change the part of $B(z)$ with negative degree;
        \item it holds that
        \begin{align*}
            [X_{\mu_0}(z)z^n, B_{\mu_i}(z)] \in \mathfrak{g}(K)_{\mu_0+\mu_i+n};
        \end{align*}
        \item it holds that 
        \begin{align*}
            \frac{\partial {\rm exp}(X_{\mu_0}(z)z^n)}{\partial z} ({\rm exp}(X_{\mu_0}(z)z^n))^{-1} \in \mathfrak{g}(K)_{\mu_0+n}.
        \end{align*}
    \end{itemize}
    We then have
    \begin{align*}
        {\rm exp}(X_{\mu_0}(z)z^n) \circ (d + B(z)\frac{dz}{z}) = d + dQ + (B_{\mu_0}(z) + [X_{\mu_0}(z),B_{-n}] )\frac{dz}{z} \text{ mod } \mathfrak{g}(K)_{> \mu_0} \frac{dz}{z}.
    \end{align*}
    Thus, the $\mu_0$-piece $B_{\mu_0}(z) + [X_{\mu_0}(z),B_{-n}]$ of the new meromorphic connection ${\rm exp}(X_{\mu_0}(z)z^n) \circ (d + B(z)\frac{dz}{z})$ commutes with $B_{-n}$. For simplicity, we use the same notation $B(z)\frac{dz}{z}$ for the resulting meromorphic connection, of which the $\mu_0$-piece $B_{\mu_0}(z)$ commutes with $B_{-n}$. With the same process, we find an element $X'_{\mu_0}(z) \in \mathfrak{g}_{\mu_0}$ such that $B_{\mu_0}(z) + [X'_{\mu_0}(z),B_{-{n-1}}]$ lies in the zero eigenspace of ${\rm ad}(B_{-(n-1)})$. Since $B_{\mu_0}(z)$ commutes with $B_{-n}$, the element $X'_{\mu_0}(z)$ also commutes with $B_{-n}$, and then 
    \begin{align*}
        {\rm Ad}({\rm exp}(X'_{\mu_0}(z)z^{n-1})) B_{-n} = B_{-n}.
    \end{align*}
    Thus, under the gauge action of ${\rm exp}(X'_{\mu_0}(z)z^{n-1})$, we obtain a meromorphic connection, of which the $\mu_0$-piece commutes with both $B_{-n}$ and $B_{-(n-1)}$. In sum, repeating this process, we can find an element $X_{\mu_0}(z)$ such that the $\mu_0$-piece of the meromorphic connection ${\rm Ad}({\rm exp}(X_{\mu_0}(z))) \circ (d + B(z)\frac{dz}{z})$ is normalized.
    
    For the inductive step, suppose that the $\mu_i$-piece $B_{\mu_i}(z)$ of the meromorphic connection $d+ B(z)\frac{dz}{z}$ has been normalized for $0 \leq i \leq k-1$. We can choose $X_{\mu_k}(z) \in \mathfrak{g}(K)_{\mu_k}$ such that $B_{\mu_k}(z) + [X_{\mu_k}(z),B_{-n}]$ lies in the zero eigenspace of ${\rm ad}(B_{-n})$. Now, we consider the element ${\rm exp}(X_{\mu_k}(z)z^n)$. By the graded structure of $\mathfrak{g}_\theta(K)$, the operator ${\rm exp}(X_{\mu_k}(z)z^n)$ fixes the $\mu_i$-piece $B_{\mu_i(K)}$. Therefore, applying the induction hypothesis, we have
    \begin{align*}
        {\rm exp}(X_{\mu_k}(z)z^n) \circ (d + B(z)\frac{dz}{z}) & = d + dQ + \left( \sum_{i=0}^{k-1} B_{\mu_i}(z) \right) \\
        & + (B_{\mu_k}(z) + [X_{\mu_k}(z),B_{-n}] )\frac{dz}{z} \text{ mod } \mathfrak{g}(K)_{> \mu_k} \frac{dz}{z}.
    \end{align*}
    With the same process as we did in the base step, the $\mu_k$-piece $B_{\mu_k}(z)$ can be normalized. 
\end{proof}

\begin{rem}\label{rem_diff_tame_wild}
    Note that in the setup of this paper (Definition \ref{defn_irregular_type_parh}), Proposition \ref{prop_canonical_form_parah} shows that a meromorphic connection with irregular type $Q$ is gauge equivalent to one in canonical form without working on an appropriate cover. When the irregular type $Q$ is trivial, the result of Proposition \ref{prop_canonical_form_parah} is not true in general, because the coefficients in the holomorphic part cannot be always transformed to elements commuting with the leading coefficient, which is $B_0$ in this case (see \cite[Theorem 6]{Bo}).
\end{rem}

\begin{cor}\label{cor_canonical_form}
	There is a one-to-one correspondence between the $G_\theta(K)$-orbits of meromorphic $G_\theta(K)$-connections $d + A(z)\frac{dz}{z}$ with irregular type $Q$, where $A(z) \in \mathfrak{g}_\theta(K)/z^n$, and the $G(R)$-orbits of meromorphic $G$-connections $d + B(z)\frac{dz}{z}$ with irregular type $Q$, where $B(z) \in \mathfrak{g}(R)/z^n$ with $B_0 \in \mathfrak{p}_\theta$.
\end{cor}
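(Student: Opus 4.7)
The plan is to set up the bijection using canonical forms (Definition \ref{defn_cano_form}) as common representatives. The key observation is that a canonical form $d + A(z)\frac{dz}{z}$ has $A(z) = A_{-n}z^{-n} + \cdots + A_0$ with all coefficients in $\mathfrak{g}$ and no positive-degree terms, so automatically $A(z) \in \mathfrak{g}(R)/z^n$; combined with Proposition \ref{prop_canonical_form_parah} (which also gives $A_0 \in \mathfrak{p}_\theta$), every canonical form is simultaneously admissible on both sides of the claimed correspondence.

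I would define the map $\Phi : \mathrm{LHS} \to \mathrm{RHS}$ by sending a $G_\theta(K)$-orbit to the $G(R)$-orbit of a canonical-form representative obtained via Proposition \ref{prop_canonical_form_parah}, and the map $\Psi : \mathrm{RHS} \to \mathrm{LHS}$ by the inclusion $G(R) \hookrightarrow G_\theta(K)$ together with $\mathfrak{g}(R)/z^n \hookrightarrow \mathfrak{g}_\theta(K)/z^n$. The composition $\Psi \circ \Phi = \mathrm{id}_{\mathrm{LHS}}$ is immediate since the canonical form lies in the original $G_\theta(K)$-orbit. The content is in the other composition and in the well-definedness of $\Phi$.

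For $\Phi \circ \Psi = \mathrm{id}_{\mathrm{RHS}}$, I would establish a refinement of Proposition \ref{prop_canonical_form_parah}: when the input $d + B(z)\frac{dz}{z}$ already lies in the RHS, each gauge element $\exp(X_{\mu_k}(z) z^n)$ constructed inductively in the proof can in fact be chosen inside $G(R)$. This is plausible because $B(z) \in \mathfrak{g}(R)/z^n$ forces each graded piece $B_{\mu_k}(z)$ to have terms only in degrees $\geq -n$, so the linear equation $[X_{\mu_k}(z), B_{-n}] \equiv -B_{\mu_k}(z) \pmod{\ker \mathrm{ad}(B_{-n})}$ admits a solution with $X_{\mu_k}(z) z^n \in \mathfrak{g}(R)$; the base-case condition $B_0 \in \mathfrak{p}_\theta$ is exactly what is needed to initiate the induction within $G(R)$. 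Well-definedness of $\Phi$ then reduces to showing that two canonical forms related by $G_\theta(K)$ are already related by $G(R)$: the rigidity of canonical forms with prescribed irregular type $Q$ forces any such intertwiner to preserve the polar part $dQ$ and the semisimple coefficients $A_i \in \mathfrak{t}$ for $i<0$, hence to lie in the Levi-parahoric $\mathcal{L}_\theta(K)$ of their common centralizer $L \subseteq G$; a direct analysis of the residual action on $A_0 \in \mathfrak{p}_\theta$ shows this stabilizer sits inside $L(R) \subseteq G(R)$.

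The main obstacle is precisely this refinement of Proposition \ref{prop_canonical_form_parah} to $G(R)$-gauges: one must track, through each inductive normalization step, that the solutions $X_{\mu_k}(z)$ can be chosen with $X_{\mu_k}(z) z^n \in \mathfrak{g}(R)$, and verify that this degree condition is preserved under the preceding gauge transformations. Once this technical point is settled, the other steps of the argument are either immediate or follow from the rigidity properties of irregular types already encoded in Definition \ref{defn_irregular_type_parh}.
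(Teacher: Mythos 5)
Your overall strategy is the same as the paper's: the proof of Corollary \ref{cor_canonical_form} given there is a three-line version of your outline, invoking Theorem \ref{thm_canonical_form} and Proposition \ref{prop_canonical_form_parah} to reduce each side to canonical forms and then asserting an ``obvious'' matching. Your two flagged verification points --- that the normalization on the right-hand side can be carried out inside $G(R)$, and that two canonical forms related by one gauge group are related by the other --- are exactly where the real content sits, and the paper leaves both essentially implicit as well. So as a road map your proposal is faithful to the intended argument and in places more honest about what must be checked.

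There is, however, one concrete step that fails: the map $\Psi$ is built on the inclusions $G(R) \hookrightarrow G_\theta(K)$ and $\mathfrak{g}(R)/z^n \hookrightarrow \mathfrak{g}_\theta(K)/z^n$, and neither holds once $\theta \neq 0$. Parahoric subgroups attached to distinct points of the apartment are not nested: if $r(\theta) \in (0,1)$ then $m_{-r}(\theta)=1$, so for $0 \neq X \in \mathfrak{g}_{-r}$ one has $\exp(X) \in U_{-r}(R) \subseteq G(R)$, while $z^{\theta}\exp(X)z^{-\theta} = \exp\bigl(z^{-r(\theta)}X\bigr)$ blows up as $z \to 0$, so $\exp(X) \notin G_\theta(K)$. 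Similarly a constant $X \in \mathfrak{g}\setminus\mathfrak{p}_\theta$ lies in $\mathfrak{g}(R)$ but not in $\mathfrak{g}_\theta(K)$, and for $B(z) \in \mathfrak{g}(R)/z^n$ the boundedness test of Remark \ref{rem_mero_ana_to_alg} fails on the leading coefficient unless $B_{-n} \in \mathfrak{p}_\theta$. Consequently a $G(R)$-orbit on the right-hand side neither lands in $\mathfrak{g}_\theta(K)/z^n\,\frac{dz}{z}$ nor is a priori contained in a single $G_\theta(K)$-orbit, so $\Psi$ as you define it is not well-posed. The repair is to route the right-to-left direction through canonical forms too: normalize a right-hand-side connection to canonical form inside $G(R)$ (here your observation is correct and useful --- the gauge elements $\exp(X_{\mu_k}(z)z^n)$ of Proposition \ref{prop_canonical_form_parah} involve only nonnegative powers of $z$ once multiplied by $z^n$ with $n\geq 1$, so they already lie in $G(R)\cap G_\theta(K)$), observe that a canonical form belongs to both ambient spaces because its polar coefficients lie in $\mathfrak{t}$ and its residue in $\mathfrak{p}_\theta$, and only then pass to the $G_\theta(K)$-orbit. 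With that modification your argument coincides with the paper's; the remaining uniqueness/rigidity statement for canonical forms, which you isolate and sketch via the centralizer $H=Z_G(Q)$, still needs to be written out, but that burden is shared by the published proof.
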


\begin{proof}
    By Theorem \ref{thm_canonical_form} and Proposition \ref{prop_canonical_form_parah}, we know that both meromorphic $G$-connections and meromorphic $G_\theta(K)$-connections with irregular type $Q$ can be transformed into a unique canonical form under the (gauge) action of $G(R)$ and $G_\theta(K)$ respectively. Now, between the canonical forms, there is an obvious one-to-one correspondence. Thus, we prove the lemma.
\end{proof}

\subsection{Statement of Results}
The setup in this subsection is the same as in \S\ref{sect_Pre}. Let $X$ be a smooth projective curve over $\mathbb{C}$. We fix a reduced effective divisor $\boldsymbol{D}$ on $X$, which is also regarded as a set of distinct points. Denote by $X_{\boldsymbol D}:= X\backslash \boldsymbol{D}$ the punctured curve. We also fix a connected complex reductive group $G$. Let $\boldsymbol{Q}=\{Q_x, x \in \boldsymbol{D}\}$ be a set of irregular types indexed by points in $\boldsymbol D$. Denote by $\boldsymbol{n}=\{n_x, x \in \boldsymbol{D}\}$ the set of positive integers, where $n_x = \deg Q_x + 1$.
\begin{defn}
	A merohoric $\mathcal{G}_{\boldsymbol\theta}$-Higgs torsor $(\mathcal{E},\varphi)$ is with \emph{irregular type} $\boldsymbol{Q}$, if for each puncture $x \in \boldsymbol{D}$, the meromorphic $G_{\theta_x}(K)$-Higgs field $\varphi(z)$ is with irregular type $Q_x$. A metrized $G$-Higgs bundle $(E,\partial''_E, \phi,h)$ is with \emph{irregular type} $\boldsymbol{Q}$ if the corresponding merohoric $\mathcal{G}_{\boldsymbol\theta}$-Higgs torsor (under the functor $\Xi_{\rm Higgs}$) is with irregular type $\boldsymbol{Q}$.
	
	A merohoric $\mathcal{G}_{\boldsymbol\theta}$-connection $(\mathcal{V},\nabla)$ is with \emph{irregular type} $\boldsymbol{Q}$ if for each puncture $x \in \boldsymbol D$, the meromorphic $G_{\theta_x}(K)$-connection $\nabla(z)$ is with irregular type $Q_x$. A metrized $G$-connection $(V,d',h)$ is with \emph{irregular type} $\boldsymbol{Q}$ if the corresponding merohoric $\mathcal{G}_{\boldsymbol\theta}$-connection (under the functor $\Xi_{\rm Conn}$) is with irregular type $\boldsymbol{Q}$.
\end{defn}

Now we are ready to state the main results in this section: Theorem \ref{thm_KH_corr}, Corollary \ref{cor_ana_dR_Dol} and Theorem \ref{thm_alg_dR_Dol}.

\begin{thm}[Kobayashi--Hitchin Correspondence]\label{thm_KH_corr}
	Let $(E,\partial''_E,\phi,h_0)$ be a metrized $\boldsymbol\theta$-adapted $G$-Higgs bundle  of degree zero with irregular type $\boldsymbol{Q}$, where $h_0$ is a given adapted metric. Then, $(E,\partial''_E,\phi)$ admits an adapted harmonic metric $h$, which is quasi-isometric to $h_0$, if and only if $(E,\partial''_E,\phi,h_0)$ is $R_{h_0}$-polystable. Moreover, the metric $h$ is unique up to automorphisms of Higgs bundles. Similarly, a metrized $\boldsymbol\theta$-adapted $G$-connection $(V,d',h)$  of degree zero with irregular type $\boldsymbol{Q}$ admits an adapted harmonic metric $h$ if and only if $(V,d',h)$ is $R_{h_0}$-polystable.
\end{thm}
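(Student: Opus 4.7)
The proof proceeds by reduction to the tame parahoric Kobayashi--Hitchin correspondence of \cite{HKSZ}, with the wild behavior absorbed into a carefully designed local model metric near each puncture. The strategy has three parts: construct a model metric locally on a formal disc around each $x \in \boldsymbol{D}$, glue these local models to a global adapted metric on $X_{\boldsymbol{D}}$, and then run a Donaldson-type perturbation argument that uses the $R_{h_0}$-polystability to produce a genuine harmonic metric quasi-isometric to $h_0$.

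The first and most substantial step is the local model. By Proposition \ref{prop_canonical_form_parah}, after a gauge transformation in $G_{\theta_x}(K)$ one may write the local Higgs field as
\begin{equation*}
\phi = dQ_x + A(z)\frac{dz}{z}, \qquad A(z) \in \mathfrak{g}_{\theta_x}(K), \quad A_0 \in \mathfrak{p}_{\theta_x}.
\end{equation*}
Since $Q_x$ takes values in the Cartan subalgebra $\mathfrak{t}$, it can be simultaneously diagonalized with the semisimple part of $A_0$, and one builds a model metric $h_{\mathrm{mod}}$ by combining Sabbah's construction from \cite{Sab,BB} (which absorbs the exponential factor coming from $dQ_x$ into weights of the form $|z|^{2\alpha}e^{-2\mathrm{Re}\,q(z)}$) with the tame parahoric model of \cite{HKSZ,BGM} (which encodes the weight $\theta_x$). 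One verifies that $h_{\mathrm{mod}}$ is $\boldsymbol\theta$-adapted, that its Chern curvature lies in $L^1$, and that the Hitchin equation $F_{h_{\mathrm{mod}}} + [\phi,\phi^{*h_{\mathrm{mod}}}] = 0$ holds on the punctured disc up to an exponentially decaying error. A local growth analysis then shows that any two adapted metrics compatible with $\boldsymbol\theta$ and $\boldsymbol{Q}$ are mutually bounded, so the quasi-isometry class of $h_0$ is fixed by the parahoric and irregular data, and the local models may be glued to a global adapted metric on $X_{\boldsymbol{D}}$ agreeing with $h_0$ up to quasi-isometry.

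For the easy direction, assume $h$ is an adapted harmonic metric. For any proper parabolic $P$, any compatible $\boldsymbol\theta$-adapted reduction $\sigma$, and any nontrivial anti-dominant character $\chi$, the standard Chern--Weil calculation applied to $F_{h_\sigma}$ together with the positivity $\chi_{*}[\phi,\phi^{*h}] \geq 0$ in the $(1,1)$ sense yields $\deg^{\mathrm{an}} E(h,\sigma,\chi) \geq 0$, the boundary contributions from the punctures being controlled by the adapted behavior; Proposition \ref{prop_ana=alg Higgs and conn} translates this to $parh\deg\mathcal{E}(\varsigma,\kappa) \geq 0$, and examining the equality case produces the polystable splitting. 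For the hard direction, one exhausts $X_{\boldsymbol{D}}$ by an increasing family of compact subsurfaces with boundary, solves the Hermitian--Einstein equation on each with Dirichlet boundary data from $h_{\mathrm{mod}}$, and extracts a limit. The key a priori $C^{0}$-bound on the self-adjoint endomorphism $s = h_{\mathrm{mod}}^{-1}h$ comes from the Donaldson functional together with a Simpson-type main estimate; the Uhlenbeck--Yau obstruction to convergence is precisely ruled out by the $R_{h_0}$-polystability hypothesis via Proposition \ref{prop_ana=alg Higgs and conn}. Uniqueness up to automorphisms of Higgs bundles follows from the convexity of the Donaldson functional along geodesics in the space of metrics, exactly as in \cite{HKSZ,BB}. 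The connection statement is obtained by the same scheme, with $\phi$ replaced by the $(1,0)$-part of $D$ extracted via \eqref{eq_metric_Higgs} and the model metric adjusted so that the pseudo-curvature vanishes at the model level.

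The main technical obstacle is the construction and analysis of the local model metric: one must simultaneously track the parahoric weight $\theta_x$, the irregular type $Q_x$, the Levi factor of the residue $A_0$ governed by Notation \ref{notn_parab_levi}, and the noncommutative Lie-algebraic structure of $\mathfrak{g}$, while verifying the $L^1$-curvature bound and quasi-isometry statement uniformly across all punctures. Once the model is in hand, the Donaldson/Uhlenbeck--Yau machinery adapted to parahoric principal bundles in \cite{HKSZ} transplants to this setting with essentially routine modifications coming from the exponential decay of the correction terms.
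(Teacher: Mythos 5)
Your proposal matches the paper's strategy: the paper likewise reduces everything to the construction of a local model metric near each puncture (its \S 3.3, generalizing Sabbah's construction to reductive $G$) and then defers the global existence, uniqueness, and polystability arguments verbatim to \cite[Theorem 5.1]{BGM} and \cite[Theorem 6.2 and Lemma 6.7]{HKSZ}, which is exactly the Donaldson/Uhlenbeck--Yau machinery you sketch. The only cosmetic difference is in the local model: you place a factor $e^{-2\mathrm{Re}\,q(z)}$ into the metric, whereas the paper keeps the tame-looking model $h_0=|z|^{2\beta}(-\ln|z|^2)^{H_\beta/2}e^{-Y_\beta}e^{-X_\beta}(-\ln|z|^2)^{H_\beta/2}$ and absorbs the $\mathfrak{t}$-valued irregular part by the unitary frame change $g_1=\exp\bigl(\sum_{i}(\tfrac{B_{-i}}{2i}z^{-i}-\tfrac{\bar B_{-i}}{2i}\bar z^{-i})\bigr)$, which leaves the Hitchin equation untouched because the polar part commutes with its adjoint.
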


As a corollary of this theorem, we establish a correspondence between $R_h$-stable metrized $G$-Higgs bundles of degree zero and $R_h$-stable metrized $G$-connections of degree zero. Before we state the result, we fix the following notations:
\begin{itemize}
	\item $\widetilde{\boldsymbol{Q}}=\{\widetilde{Q}_x, x \in \boldsymbol{D}\}$ and $\boldsymbol{Q}=\{Q_x, x \in \boldsymbol{D}\}$ are two collections of irregular types;
	\item $\boldsymbol\alpha = \{\alpha_x, x \in \boldsymbol{D}\}$ and $\boldsymbol\beta =\{\beta_x, x \in \boldsymbol{D}\}$ are two collections of weights;
	\item $\phi_{\boldsymbol\alpha}=\{\phi_{\alpha_x}, x \in \boldsymbol{D}\}$ and $d'_{\boldsymbol\beta}=\{d'_{\beta_x}, x \in \boldsymbol{D}\}$ are two collections of elements in $\mathfrak{g}$, which are regarded as \emph{residues},
\end{itemize}
and then, we define the following two categories:

\begin{itemize}
\item $\mathcal{C}_{\rm Dol}(X_{\boldsymbol D},G,\boldsymbol\alpha, \phi_{\boldsymbol\alpha},\widetilde{\boldsymbol{Q}})$: the category of $R_h$-stable metrized $\boldsymbol\alpha$-adapted $G$-Higgs bundles of degree zero on $X_{\boldsymbol D}$ with irregular types $\widetilde{\boldsymbol{Q}}$ and the Levi factors of residues of the Higgs field are $\phi_{\boldsymbol\alpha}$ at punctures;
\item $\mathcal{C}_{\rm dR}(X_{\boldsymbol D},G,\boldsymbol\beta,d'_{\boldsymbol\beta},\boldsymbol{Q})$: the category of $R_h$-stable metrized $\boldsymbol\beta$-adapted $G$-connections of degree zero on $X_{\boldsymbol D}$ with irregular types $\boldsymbol{Q}$ and the Levi factors of residues of the connection are $d'_{\boldsymbol\beta}$ at punctures.
\end{itemize}

For each puncture $x \in \boldsymbol{D}$, we collect the above data in the following table
\begin{center}
	\begin{tabular}{|c|c|c|}
		\hline
		\rule{0pt}{2.6ex} &\ \ \  Dolbeault\ \ \ &\ \ \ de Rham\ \ \  \\[0.5ex]
		\hline
		\rule{0pt}{2.6ex} weights & $\alpha_x$ & $\beta_x$  \\[0.7ex]
		\hline
		\rule{0pt}{2.6ex} residues & $\phi_{\alpha_x}$ & $d'_{\beta_x}$ \\[0.7ex]
		\hline
		\rule{0pt}{2.6ex} irregular types & $\widetilde{Q}_x$ & $Q_x$ \\[0.5ex]
		\hline
	\end{tabular}
\end{center}
\begin{cor}\label{cor_ana_dR_Dol}
	The categories $\mathcal{C}_{\rm Dol}(X_{\boldsymbol D},G,\boldsymbol\alpha, \phi_{\boldsymbol\alpha},\widetilde{\boldsymbol{Q}})$ and $\mathcal{C}_{\rm dR}(X_{\boldsymbol D},G,\boldsymbol\beta,d'_{\boldsymbol\beta},\boldsymbol{Q})$ are equivalent with respect to the following table of data
	\begin{center}
		\begin{tabular}{|c|c|c|}
			\hline
			\rule{0pt}{2.6ex} & Dolbeault &\ \ \ de Rham\ \ \  \\[0.5ex]
			\hline
			\rule{0pt}{2.6ex} weights & $\frac{1}{2}(s_{\beta_x}+\bar s_{\beta_x})$ & $\beta_x$  \\[0.7ex]
			\hline
			\rule{0pt}{2.6ex} residues & $\ \frac{1}{2}(s_{\beta_x}-\beta_x)+(Y_{\beta_x}-H_{\beta_x}+X_{\beta_x})\ $ & $s_{\beta_x}+Y_{\beta_x}$ \\[0.7ex]
			\hline
			\rule{0pt}{2.6ex} irregular types & $\frac{1}{2}Q_x$ & $Q_x$ \\[0.7ex]
			\hline
		\end{tabular}
	\end{center}
	where $d'_{\beta_x} = s_{\beta_x}+Y_{\beta_x}$ is the Jordan decomposition, $Y_{\beta_x}$ is nilpotent part and $(X_{\beta_x},Y_{\beta_x},H_{\beta_x})$ is an appropriate $\mathfrak{sl}_2$-triple.
\end{cor}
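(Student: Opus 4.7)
The plan is to apply Theorem \ref{thm_KH_corr} in both directions to construct functors between the two categories via harmonic metrics, and to perform a local analysis at each puncture to verify the prescribed transformation of data.

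For the functor $\mathcal{C}_{\rm Dol} \to \mathcal{C}_{\rm dR}$, given a stable object $(E, \partial''_E, \phi, h_0)$ of degree zero with irregular type $\widetilde{\boldsymbol Q}$, Theorem \ref{thm_KH_corr} yields a harmonic metric $h$ quasi-isometric to $h_0$, and then the flat connection $D = \partial'_h + \partial''_E + \phi + \phi^*$ gives an object of $\mathcal{C}_{\rm dR}$ with $d' = \partial'_h + \phi$ and $d'' = \partial''_E + \phi^*$. Conversely, starting from $(V, d', h) \in \mathcal{C}_{\rm dR}$, the harmonic metric from Theorem \ref{thm_KH_corr} produces $\partial''$ and $\phi$ via \eqref{eq_metric_Higgs}. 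Uniqueness of the harmonic metric up to automorphism makes the two assignments mutually inverse on isomorphism classes, and by construction they preserve degree zero and $R_h$-stability. The remaining content of the corollary is the identification of the local data at each puncture.

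Fix a puncture $x \in \boldsymbol D$. By Proposition \ref{prop_canonical_form_parah}, the local Higgs field may be normalized as $\phi = d\widetilde Q_x + \phi_{\alpha_x}\tfrac{dz}{z} + \cdots$ with $\phi_{\alpha_x}$ commuting with the polar part. The harmonic metric then takes the model form $h \sim h_0 \cdot |z|^{2\alpha_x} \cdot \exp(2\,\mathrm{Re}\,\widetilde Q_x)$ (in the toral reduction), so the Chern $(1,0)$-operator satisfies $\partial'_h \sim d + d\widetilde Q_x + \alpha_x \tfrac{dz}{z}$ to top order in the polar part. Summing with $\phi$ produces $d' = d + 2\,d\widetilde Q_x + (\alpha_x + \phi_{\alpha_x})\tfrac{dz}{z} + \cdots$. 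Matching this with the de Rham local data $(Q_x, \beta_x, s_{\beta_x} + Y_{\beta_x})$ forces $\widetilde Q_x = \tfrac{1}{2}Q_x$ and, after separating the real and imaginary parts of the semisimple part of the residue, gives the weight identity $\alpha_x = \tfrac{1}{2}(s_{\beta_x} + \bar s_{\beta_x})$. The nilpotent contribution $Y_{\beta_x}$ is then absorbed into the Dolbeault residue by rotation through the $\mathfrak{sl}_2$-triple $(X_{\beta_x}, H_{\beta_x}, Y_{\beta_x})$, exactly as in the tame parahoric correspondence treated in \cite[\S 4.2]{HKSZ}, producing $\phi_{\alpha_x} = \tfrac{1}{2}(s_{\beta_x} - \beta_x) + (Y_{\beta_x} - H_{\beta_x} + X_{\beta_x})$, which recovers the Dolbeault entries of the table.

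The main obstacle is the construction of the model harmonic metric with the required polar asymptotics, so that the difference between the model and the true harmonic metric can be absorbed by an elliptic existence argument. This local analytic input, generalizing the work of Sabbah \cite{Sab} and Biquard--Boalch \cite{BB} to the parahoric setting for general $G$, is the content of \S\ref{subsect_local_study}; granting it, the equivalence of categories then follows directly from the uniqueness statement in Theorem \ref{thm_KH_corr}.
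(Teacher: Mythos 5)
Your global skeleton coincides with the paper's: Theorem \ref{thm_KH_corr} together with the uniqueness of the harmonic metric gives the equivalence of categories, and everything reduces to a local computation at each puncture; the paper likewise defers the global part to \cite{BGM,HKSZ} and devotes \S\ref{subsect_local_study} to the local model. The difficulty is in your local computation, which contains a genuine error.

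The step ``the harmonic metric then takes the model form $h \sim h_0 \cdot |z|^{2\alpha_x}\cdot \exp(2\,\mathrm{Re}\,\widetilde Q_x)$, so $\partial'_h \sim d + d\widetilde Q_x + \alpha_x\tfrac{dz}{z}$'' is not correct. To compute the Chern operator $\partial'_h = \partial + h^{-1}\partial h$ one must express $h$ in a frame holomorphic for $\partial''_E$, and in such an adapted frame the model harmonic metric has only power and logarithmic factors: in the paper it is $|z|^{2\beta}(-\ln|z|^2)^{H_\beta/2}e^{-Y_\beta}e^{-X_\beta}(-\ln|z|^2)^{H_\beta/2}$ on the de Rham side and $|z|^{s_\beta+\bar s_\beta}e^{Y_\beta}(-\ln|z|^2)^{H_\beta}e^{X_\beta}$ on the Dolbeault side. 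There is no $e^{2\mathrm{Re}\,\widetilde Q_x}$ factor; a metric with such growth would not be adapted and would not compute the parahoric degree (exponential factors appear only when one passes to flat frames). Consequently $\partial'_h$ contributes no $d\widetilde Q_x$ term, and your mechanism for producing $d' = d + 2\,d\widetilde Q_x + \cdots$ fails as stated. The factor of $2$ in fact arises because the de Rham holomorphic structure $d'' = \partial''_E + \phi^*$ differs from $\partial''_E$ by $\phi^* \approx \overline{d\widetilde Q_x}$: one must re-trivialize holomorphically with respect to $d''$, and this change of frame --- realized in the paper by the unitary gauge $g_1=\exp\bigl(\sum_i(\tfrac{B_{-i}}{2i}z^{-i}-\tfrac{\bar B_{-i}}{2i}\bar z^{-i})\bigr)$ followed by $g_2$ --- transfers the irregular part of $\phi^*$ into $d'$ and doubles the polar term. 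Your argument omits this change of holomorphic structure entirely, which is precisely where the table's entry $\widetilde Q_x=\tfrac12 Q_x$ comes from.

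Two smaller gaps: you invoke Proposition \ref{prop_canonical_form_parah} to normalize the Higgs field, but that proposition is stated for connections (the Higgs analogue rests on Definition \ref{defn_irregular_type_parh} and requires its own, easier, argument); and before Theorem \ref{thm_KH_corr} can be applied one must check that the model metric is adapted with acceptable curvature and that the pseudo-curvature $F_{D''_{h_0}}$ vanishes --- these verifications occupy most of \S\ref{subsect_local_study} and are only alluded to, not supplied, in your proposal.
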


The key part of the proof of Theorem \ref{thm_KH_corr}, Corollary \ref{cor_ana_dR_Dol} is to construct a model metric $h_0$ as defined in \S\ref{subsect_prel_ana} and study the relation between Higgs bundles and integrable connections locally. After the local study, the proofs are the same as in \cite[Theorem 5.1]{BGM}, \cite[Theorem 6.2]{HKSZ} and \cite[Lemma 6.7]{HKSZ}. Thus, in the next subsection, we only give the construction of a model metric $h_0$ and the explicit correspondence for the local data.

\begin{rem}
In the papers \cite{Boalch2012,Boalch2018,CDDNP2020,DDP2018}, the authors consider the connection in the form $d-B(z)\frac{dz}{z}$, and thus the corresponding irregular types for Dolbeault and de Rham are $-\frac{1}{2}Q$ and $Q$, respectively.
\end{rem}

For the algebraic side, we define
\begin{itemize}
\item $\mathcal{C}_{\rm Dol}(X,\mathcal{G}_{\boldsymbol\alpha}, \varphi_{\boldsymbol\alpha},\widetilde{\boldsymbol{Q}})$: the category of $R$-stable merohoric $\mathcal{G}_{\boldsymbol\alpha}$-Higgs torsors of degree zero on $X$ with irregular types $\widetilde{\boldsymbol{Q}}$ and the Levi factors of residues of the Higgs field are $\varphi_{\boldsymbol\alpha}$ at punctures;

\item $\mathcal{C}_{\rm dR}(X,\mathcal{G}_{\boldsymbol\beta},\nabla_{\boldsymbol\beta},\boldsymbol{Q})$: the category of $R$-stable merohoric $\mathcal{G}_{\boldsymbol\beta}$-connections of degree zero on $X$ with irregular types $\boldsymbol{Q}$ and the Levi factors of residues of the connection are $\nabla_{\boldsymbol\beta}$ at punctures.
\end{itemize}

Based on the functors $\Xi_{\rm Dol}$ and $\Xi_{\rm dR}$, we have the desired correspondence for algebraic sides.
\begin{thm}\label{thm_alg_dR_Dol}
	The categories $\mathcal{C}_{\rm Dol}(X,\mathcal{G}_{\boldsymbol\alpha}, \varphi_{\boldsymbol\alpha},\widetilde{\boldsymbol{Q}})$ and $\mathcal{C}_{\rm dR}(X,\mathcal{G}_{\boldsymbol\beta},\nabla_{\boldsymbol\beta},\boldsymbol{Q})$ are equivalent, where $\varphi_{\boldsymbol\alpha} = \phi_{\boldsymbol\alpha}$, $\nabla_{\boldsymbol\beta}=d'_{\boldsymbol\beta}$ and the data are exactly the same as the data in the table of Corollary \ref{cor_ana_dR_Dol}.
\end{thm}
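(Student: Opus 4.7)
The plan is to prove Theorem 3.6 by building a commutative square of functors and reducing the algebraic equivalence to the analytic one already encoded in Corollary \ref{cor_ana_dR_Dol}. More precisely, I would establish that the functors
\begin{align*}
\Xi_{\rm Dol}\colon \mathcal{C}_{\rm Dol}(X_{\boldsymbol D},G,\boldsymbol\alpha,\phi_{\boldsymbol\alpha},\widetilde{\boldsymbol Q}) &\longrightarrow \mathcal{C}_{\rm Dol}(X,\mathcal{G}_{\boldsymbol\alpha},\varphi_{\boldsymbol\alpha},\widetilde{\boldsymbol Q}),\\
\Xi_{\rm dR}\colon \mathcal{C}_{\rm dR}(X_{\boldsymbol D},G,\boldsymbol\beta,d'_{\boldsymbol\beta},\boldsymbol Q) &\longrightarrow \mathcal{C}_{\rm dR}(X,\mathcal{G}_{\boldsymbol\beta},\nabla_{\boldsymbol\beta},\boldsymbol Q)
\end{align*}
constructed in \S\ref{subsect_func_Xi} are equivalences of categories when restricted to the given subcategories, after which the theorem follows by composing $\Xi_{\rm dR}\circ(\text{equivalence of Cor. \ref{cor_ana_dR_Dol}})\circ\Xi_{\rm Dol}^{-1}$.

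The verification that $\Xi_{\rm Higgs}$ and $\Xi_{\rm Conn}$ are faithful and compatible with residues, irregular types and parahoric weights is tautological from their construction (see Remark \ref{rem_mero_ana_to_alg}), and Proposition \ref{prop_ana=alg Higgs and conn} already identifies the $R_h$-stability/degree-zero conditions with the $R$-stability/degree-zero conditions. So the real content of the step is \emph{essential surjectivity}: given a merohoric $\mathcal{G}_{\boldsymbol\alpha}$-Higgs torsor $(\mathcal{E},\varphi)$ with irregular type $\widetilde{\boldsymbol Q}$ and Levi-residue data $\varphi_{\boldsymbol\alpha}$, one must produce a metrized $(\boldsymbol\alpha,\boldsymbol n)$-adapted $G$-Higgs bundle whose image under $\Xi_{\rm Higgs}$ is isomorphic to $(\mathcal{E},\varphi)$. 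To do this I would restrict $(\mathcal{E},\varphi)$ to $X_{\boldsymbol D}$ (which is automatically a holomorphic $G$-Higgs bundle), then on each formal disc $\mathbb{D}_x$ use Proposition \ref{prop_canonical_form_parah} to put $\varphi$ in canonical form and read off an $\boldsymbol\alpha$-adapted local model metric $h_0 = h_{0,\rm std}\cdot|z|^{2\alpha_x}$; patching these local metrics to a smooth metric on $X_{\boldsymbol D}$ via a partition of unity produces a global adapted metric $h$, and one checks the boundedness conditions of Definition \ref{defn_ana_adapted} directly from the canonical-form bound $z^{\alpha_x}(z^{n_x}\varphi)z^{-\alpha_x}\in\mathfrak g(R)$ guaranteed by Remark \ref{rem_mero_ana_to_alg}. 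The identity coset chosen in the construction of $\Xi$ then ensures $\Xi_{\rm Higgs}(E,\partial''_E,\phi,h)\cong(\mathcal{E},\varphi)$, and the same argument on the de Rham side handles $\Xi_{\rm dR}$; fullness is obtained by extending algebraic isomorphisms over $X_{\boldsymbol D}$ and observing that the parahoric structure at the punctures is preserved because both sides have the same irregular type and Levi residues.

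With both $\Xi_{\rm Dol}$ and $\Xi_{\rm dR}$ upgraded to categorical equivalences onto their images, the statement of the theorem is reduced to Corollary \ref{cor_ana_dR_Dol}; the translation table is precisely the one stated, with $\varphi_{\boldsymbol\alpha}=\phi_{\boldsymbol\alpha}$ and $\nabla_{\boldsymbol\beta}=d'_{\boldsymbol\beta}$, since the Levi factor of a residue and the irregular type are invariants of the gauge class under $G(R)$ (equivalently under $G_{\theta}(K)$, by Corollary \ref{cor_canonical_form}), so they are intrinsic to the algebraic object and agree with the analytic ones.

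The main obstacle I anticipate is the essential surjectivity, specifically showing that an algebraic merohoric object equipped with the prescribed irregular type $\widetilde{\boldsymbol Q}$ (resp.\ $\boldsymbol Q$) and Levi-residue data admits a genuinely $\boldsymbol\alpha$-adapted (resp.\ $\boldsymbol\beta$-adapted) metric whose curvature is in $L^1$ at each puncture; the canonical-form input from Proposition \ref{prop_canonical_form_parah} is what makes this possible, and without the ``very good'' hypothesis one would be forced onto a ramified cover in the sense of Theorem \ref{thm_canonical_form}. The remaining checks, namely compatibility with compatible reductions of structure group and with parahoric characters, follow from the bijections of \cite[Lemma 3.14, Lemma 5.6]{HKSZ} and \cite[Lemma 4.2]{KSZ2parh} already invoked in the proof of Proposition \ref{prop_ana=alg Higgs and conn}.
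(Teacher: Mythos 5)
Your proposal follows essentially the same route as the paper: reduce to the analytic equivalence of Corollary \ref{cor_ana_dR_Dol} via the commutative square with $\Xi_{\rm Dol}$ and $\Xi_{\rm dR}$, and establish essential surjectivity by restricting to $X_{\boldsymbol D}$ and patching the local model metric of \S\ref{subsect_local_study} with a partition of unity. The only (minor) difference is that the paper makes explicit two appeals to Theorem \ref{thm_KH_corr} that you leave implicit: uniqueness of the harmonic metric to get injectivity of $\Xi_{\rm Dol}$ on objects, and a second application to upgrade the patched adapted metric $h_0$ to a harmonic metric $h$ before invoking the analytic correspondence.
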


\begin{proof}
The idea of the proof is given in the following diagram
\begin{center}
	\begin{tikzcd}
	 \mathcal{C}_{\rm Dol}(X_{\boldsymbol D},G,\boldsymbol\alpha, \phi_{\boldsymbol\alpha},\widetilde{\boldsymbol{Q}}) \arrow[rr, equal, "\text{ Corollary } \ref{cor_ana_dR_Dol}"] \arrow[d, "\Xi_{\rm Dol}"] & & \mathcal{C}_{\rm dR}(X_{\boldsymbol D},G,\boldsymbol\beta,d'_{\boldsymbol\beta},\boldsymbol{Q}) \arrow[d, "\Xi_{\rm dR}"] \\
	\mathcal{C}_{\rm Dol}(X,\mathcal{G}_{\boldsymbol\alpha}, \varphi_{\boldsymbol\alpha},\widetilde{\boldsymbol{Q}}) \arrow[rr, dash, dotted] & & \mathcal{C}_{\rm dR}(X,\mathcal{G}_{\boldsymbol\beta},\nabla_{\boldsymbol\beta},\boldsymbol{Q}).
	\end{tikzcd}
\end{center}
Corollary \ref{cor_ana_dR_Dol} gives the equivalence of two categories of analytic objects. The functors $\Xi_{\rm Dol}$ and $\Xi_{\rm dR}$ transfer analytic objects to algebraic objects. To prove this theorem, it is enough to show that the functors $\Xi_{\rm Dol}$ and $\Xi_{\rm dR}$ provide equivalences. We only give the proof for the functor $\Xi_{\rm Dol}$. By the uniqueness of the harmonic metric (Theorem \ref{thm_KH_corr}), the functor $\Xi_{\rm Dol}$ is injective on the objects. Now we consider the surjectivity. Take a merohoric $\mathcal{G}_{\boldsymbol\theta}$-Higgs torsor $(\mathcal{E},\varphi)$ in the category $\mathcal{C}_{\rm Dol}(X,\mathcal{G}_{\boldsymbol\alpha}, \varphi_{\boldsymbol\alpha},\widetilde{\boldsymbol{Q}})$. We obtain a holomorphic $(\boldsymbol\theta,\boldsymbol{n})$-adapted $G$-Higgs bundle $(E,\partial''_E,\phi)$ with irregular type $\widetilde{\boldsymbol{Q}}$. By the construction of the model metric in \S\ref{subsect_local_study}, we obtain an adapted metric $h_0$ on $E$ (by partition of unity). Therefore, we obtain an element $(E,\partial''_E,\phi,h_0)$ in the category $\mathcal{C}_{\rm Higgs}(X_{\boldsymbol D},G,\boldsymbol\alpha, \phi_{\boldsymbol\alpha},\widetilde{\boldsymbol{Q}})$ such that it is $R_h$-stable and of degree zero. By Theorem \ref{thm_KH_corr} again, we obtain a harmonic metric $h$ on $E$, and therefore an element $(E,\partial''_E,\phi,h)$ in the category $\mathcal{C}_{\rm Dol}(X_{\boldsymbol D},G,\boldsymbol\alpha, \phi_{\boldsymbol\alpha},\widetilde{\boldsymbol{Q}})$. This finishes the proof of this theorem.
\end{proof}

\subsection{Local Study}\label{subsect_local_study}
In this subsection, we give an appropriate model metric $h_0$ by generalizing Sabbah's result \cite{Sab} to general complex reductive groups. Let $x\in\boldsymbol{D}$ be a point in the divisor, and denote by $\mathbb{D}^*$ a punctured unit disc around $x$ with local coordinate $z$. To simplify the notation, the weight is denoted by $\beta$ and the irregular type is denoted by $Q$. Given a meromorphic $G$-connection with irregular type $Q$, one may take a local holomorphic basis $\mathbf{e}$ so that the connection $D$ has the form
\begin{align*}
d +  B(z)\frac{dz}{z} & = d + (B_{-n} z^{-n} + \dots + B_{0})\frac{dz}{z} \\ 
& = d + (B_0 + z Q'(z))\frac{dz}{z},
\end{align*}
with $B_{0} \in\mathfrak{l}_\beta$, and $B_{-i} \in\mathfrak{t}$ for $i \geq 1$. Let $\beta$ be a weight for the connection. We apply the Jordan decomposition for $B_{0}$ as
\begin{align*}
B_0=s_\beta+Y_\beta,
\end{align*}
where $s_\beta\in\mathfrak{t}$ is the semisimple part and $Y_\beta$ the nilpotent part. Completing $Y_\beta$ into a natural $\mathfrak{sl}_2$-triple $(X_\beta,H_\beta,Y_\beta)$ so that $X_\beta$ is the ``hermitian dual'' of $Y_\beta$ and $H_\beta=[X_\beta,Y_\beta]\in\mathfrak{t}$. They satisfy the relations $[H_\beta,X_\beta]=2X_\beta, [H_\beta,Y_\beta]=-2Y_\beta$. Then, we can write 
\begin{align*}
D\mathbf{e}=\mathbf{e}\cdot(s_\beta+Y_\beta+zQ'(z))\frac{dz}{z}.
\end{align*}
Decompose the exterior differential operator into different types as $d=\partial'_0+\partial''_0$. The model metric we consider is denoted by $h_0$ so that the new basis $\mathbf{e}_0:=\mathbf{e}\cdot g_0$ is orthonormal, where
\begin{align*}
g_0=|z|^{-\beta}(-\ln|z|^2)^{-H_\beta/2}e^{X_\beta}.
\end{align*}
Therefore, under the trivialization $\mathbf{e}$, the model metric $h_0$ has the form
\begin{align}\label{model metric}
h_0=|z|^{2\beta}(-\ln|z|^2)^{H_\beta/2}e^{-Y_\beta}e^{-X_\beta}(-\ln|z|^2)^{H_\beta/2},
\end{align}
and
\begin{align}\label{residue-con}
|\mathbf{e}|^2_{h_0}=|z|^{2\beta}(-\ln|z|^2)^{H_\beta/2}e^{-Y_\beta}e^{-X_\beta}(-\ln|z|^2)^{H_\beta/2},
\end{align}
this characterizes the jump, i.e. weight $\beta$ of the connection $D$.

\begin{lem}[Lemma 3.2 in \cite{Sab}]
	The following formulas will be used frequently
	\begin{align*}
	\partial_z\Big((-\ln|z|^2)^{\pm H_\beta/2}\Big)&=\pm\frac{H_\beta}{2\ln|z|^2}(-\ln|z|^2)^{\pm H_\beta/2}\cdot z^{-1};\\
	\partial_{\bar z}\Big((-\ln|z|^2)^{\pm H_\beta/2}\Big)&=\pm\frac{H_\beta}{2\ln|z|^2}(-\ln|z|^2)^{\pm H_\beta/2}\cdot {\bar z}^{-1};\\
	(-\ln|z|^2)^{\pm H_\beta/2}X_\beta(-\ln|z|^2)^{\mp H_\beta/2}&=(-\ln|z|^2)^{\pm 1}X_\beta;\\
	(-\ln|z|^2)^{\pm H_\beta/2}Y_\beta(-\ln|z|^2)^{\mp H_\beta/2}&=(-\ln|z|^2)^{\mp 1}Y_\beta;\\
	e^{X_\beta}H_\beta e^{-X_\beta}&=H_\beta-2X_\beta;\\
	e^{Y_\beta}H_\beta e^{-Y_\beta}&=H_\beta+2Y_\beta;\\
	e^{X_\beta}Y_\beta e^{-X_\beta}&=Y_\beta+H_\beta-X_\beta,
	\end{align*}
	which in turn imply
	\begin{align*}
	e^{-X_\beta}H_\beta e^{X_\beta}&=H_\beta+2X_\beta;\\
	e^{-Y_\beta}H_\beta e^{Y_\beta}&=H_\beta-2Y_\beta;\\
	e^{-X_\beta}Y_\beta e^{X_\beta}&=Y_\beta-H_\beta-X_\beta.
	\end{align*}
\end{lem}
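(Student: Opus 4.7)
The lemma collects a family of elementary identities built from the fixed $\mathfrak{sl}_2$-triple $(X_\beta, H_\beta, Y_\beta)$ and the scalar function $-\ln|z|^2$. My plan is to verify each identity by direct computation, grouping them into three families that all rest on the same tool: scalarizing matrix exponentials via the $\mathfrak{sl}_2$ relations $[H_\beta, X_\beta] = 2X_\beta$, $[H_\beta, Y_\beta] = -2Y_\beta$, $[X_\beta, Y_\beta] = H_\beta$.

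For the two derivative identities, I would write $(-\ln|z|^2)^{\pm H_\beta/2} = \exp\bigl(\pm \tfrac{H_\beta}{2}\ln(-\ln|z|^2)\bigr)$, note that $\partial_z(-\ln|z|^2) = -z^{-1}$ and $\partial_{\bar z}(-\ln|z|^2) = -\bar z^{-1}$, and apply the chain rule. Since $H_\beta$ is a fixed matrix the derivative passes through its own power series, producing the extra scalar factor $\mp \tfrac{H_\beta/2}{(-\ln|z|^2)z}$, which after a sign rearrangement becomes the asserted $\pm \tfrac{H_\beta}{2\ln|z|^2}\cdot z^{-1}$ (and similarly with $\bar z$). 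For the two scalar conjugation identities $(-\ln|z|^2)^{\pm H_\beta/2}X_\beta(-\ln|z|^2)^{\mp H_\beta/2}$ and $(-\ln|z|^2)^{\pm H_\beta/2}Y_\beta(-\ln|z|^2)^{\mp H_\beta/2}$, I would use the standard adjoint-action formulas $e^{sH_\beta}X_\beta e^{-sH_\beta} = e^{2s}X_\beta$ and $e^{sH_\beta}Y_\beta e^{-sH_\beta} = e^{-2s}Y_\beta$, which are immediate from the two bracket relations above. Setting $s = \pm\tfrac12\ln(-\ln|z|^2)$ gives $e^{2s} = (-\ln|z|^2)^{\pm 1}$ and $e^{-2s} = (-\ln|z|^2)^{\mp 1}$, which matches the claimed right-hand sides exactly.

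The remaining six exponential conjugation identities follow from the finite Taylor expansion
\begin{equation*}
e^A B e^{-A} \;=\; \sum_{k \ge 0} \frac{1}{k!}(\operatorname{ad}_A)^k B,
\end{equation*}
which truncates at low order in each case because the $\mathfrak{sl}_2$ string is short. For $A = \pm X_\beta$, $B = H_\beta$ one has $\operatorname{ad}_A B = \mp 2X_\beta$ and $\operatorname{ad}_A^2 B = 0$, yielding $H_\beta \mp 2X_\beta$; for $A = \pm Y_\beta$, $B = H_\beta$ one similarly gets $\pm 2Y_\beta$ at first order and then zero; and for $A = \pm X_\beta$, $B = Y_\beta$ the successive brackets are $\pm H_\beta$, $-2X_\beta$, $0$, giving $Y_\beta \pm H_\beta - X_\beta$. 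The whole argument is routine bookkeeping, and the only point requiring genuine care is the sign conventions in the chain rule and in $[X_\beta, H_\beta] = -[H_\beta, X_\beta]$; there is no conceptual obstacle, which is consistent with the fact that the lemma is simply being quoted from \cite{Sab} for later use.
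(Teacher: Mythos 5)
Your proposal is correct and follows essentially the same route as the paper: the paper verifies only the third identity via $\mathrm{Ad}(e^{A}) = e^{\mathrm{ad}(A)}$ applied to $A = \pm\tfrac{H_\beta}{2}\ln(-\ln|z|^2)$ and declares the rest analogous, which is exactly the adjoint-action/truncated-$\mathrm{ad}$-series computation you carry out for all ten identities. Your sign bookkeeping (in particular $\partial_z(-\ln|z|^2) = -z^{-1}$ and $[X_\beta,H_\beta] = -2X_\beta$) checks out against every stated right-hand side.
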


\begin{proof}
	Here we just give for the reader a proof to the third one, and the others can be proved analogously. In fact,
	\begin{align*}
	(-\ln|z|^2)^{\pm H_\beta/2}X_\beta(-\ln|z|^2)^{\mp H_\beta/2}&=\mathrm{Ad}(e^{\pm H_\beta/2\ln(-\ln|z|^2)})X_\beta\\
	&=e^{\mathrm{ad}(\pm H_\beta/2\ln(-\ln|z|^2))}X_\beta\\
	&=(-\ln|z|^2)^{\pm 1}X_\beta.
	\end{align*}
\end{proof}

\begin{lem}
	The curvature $F_{h_0}=(D_{h_0})^2$ of the Chern connection $D_{h_0}:=\partial_{h_0}'+\partial_0''$ associated to $h_0$ and $\partial_0''$ is acceptable, namely
	\begin{align*}
	\|F_{h_0}\|_{h_0}\leq\frac{C}{|z^2|(\ln|z|^2)^2}
	\end{align*}
	holds for some constant $C>0$.
\end{lem}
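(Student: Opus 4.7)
The plan is a direct computation of $F_{h_0}$ in the original holomorphic frame $\mathbf{e}$, followed by an estimate of its $h_0$-pointwise norm by passing to the orthonormal frame $\mathbf{e}_0=\mathbf{e}\cdot g_0$. In the frame $\mathbf{e}$, the Chern connection form is $\omega:=h_0^{-1}\partial h_0$, so the curvature is $F_{h_0}=\bar\partial\omega$, viewed as a $\mathrm{End}(E)$-valued $(1,1)$-form.

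First I would decompose $h_0=|z|^{2\beta}\widetilde h$, with $\widetilde h=(-\ln|z|^2)^{H_\beta/2}e^{-Y_\beta}e^{-X_\beta}(-\ln|z|^2)^{H_\beta/2}$. Using $\partial\log|z|^{2\beta}=\beta\,dz/z$ together with the first identity of the preceding lemma, I would write $\omega$ as a sum of terms of the form $P\cdot dz/z$ and $R\cdot dz/(z\ln|z|^2)$, where $P,R$ are expressions built from $\beta$, $H_\beta$, $X_\beta$, $Y_\beta$, the exponentials $e^{\pm X_\beta}, e^{\pm Y_\beta}$, and bounded powers of $(-\ln|z|^2)^{\pm 1}$ produced by pushing the $H_\beta/2$-power factors through $X_\beta, Y_\beta$ via the third and fourth identities of the lemma. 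The piece $\beta\,dz/z$, being holomorphic, is annihilated when I next apply $\bar\partial$.

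Taking $\bar\partial\omega$, the dominant singular contribution comes from $\bar\partial\bigl((\ln|z|^2)^{-1}\bigr)=\bigl(\bar z\,(\ln|z|^2)^2\bigr)^{-1}$, which combined with the $dz/z$ factor gives the target decay $dz\wedge d\bar z/(|z|^2(\ln|z|^2)^2)$. All other contributions from $\bar\partial$ acting on the $(-\ln|z|^2)^{\pm H_\beta/2}$-factors are at worst of the same order, and at best strictly smaller (by an extra power of $\ln|z|^2$). To compute the $h_0$-norm, I would conjugate $F_{h_0}$ into the orthonormal frame, where the norm equals the Frobenius norm of $g_0^{-1}F_{h_0}\,g_0$. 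The remaining identities of the preceding lemma let me commute every $(-\ln|z|^2)^{\pm H_\beta/2}$, $e^{\pm X_\beta}$, $e^{\pm Y_\beta}$ across $H_\beta, X_\beta, Y_\beta$, producing only polynomial-in-$\ln|z|^2$ factors of bounded degree, while the $|z|^{\pm 2\beta}$ factors coming from $g_0^{\pm 1}$ cancel against those inside $F_{h_0}$.

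The main obstacle is the bookkeeping: tracking how each factor in $g_0^{\pm 1}$ conjugates through the $\mathfrak{sl}_2$-triple, and verifying that the potential $1/|z|^2$ terms (without any log factor) present in the naive computation actually cancel. This cancellation is exactly what motivates the specific ``nilpotent dressing'' $e^{X_\beta}$ in the definition of $g_0$, and it is what distinguishes an \emph{acceptable} model metric from a generic one. The computation is the reductive-group analogue of \cite[Lemma 3.2]{Sab}, and the reliance on the $\mathfrak{sl}_2$-triple $(X_\beta,H_\beta,Y_\beta)$ means that once the commutation identities of the preceding lemma are in hand, the estimate proceeds formally as in the $\mathrm{GL}_n(\mathbb{C})$ case, yielding $\|F_{h_0}\|_{h_0}\le C/(|z|^2(\ln|z|^2)^2)$ for some $C>0$.
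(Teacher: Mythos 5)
Your proposal is correct and follows essentially the same route as the paper: compute the Chern connection form $h_0^{-1}\partial h_0$ in the holomorphic frame $\mathbf{e}$ using the commutation identities, observe that the constant (and hence $\bar\partial$-closed) pieces drop out so the curvature consists only of terms decaying like $(\ln|z|^2)^{-2}$ and $(\ln|z|^2)^{-3}$ times $dz\wedge d\bar z/|z|^2$, and then conjugate by $g_0$ into the orthonormal frame $\mathbf{e}_0$ to read off the norm bound. The only cosmetic difference is that the paper carries out the computation explicitly (obtaining $F_{h_0}(\mathbf{e}_0)=\tfrac{2H_\beta}{(\ln|z|^2)^2}\tfrac{dz\wedge d\bar z}{|z|^2}$), whereas you argue the vanishing of the log-free $1/|z|^2$ terms somewhat indirectly; in fact they vanish simply because the corresponding coefficients $\beta-Y_\beta$ of the connection form are constant in $z$, not because of a cancellation forced by the dressing $e^{X_\beta}$.
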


\begin{proof}
	Under the trivialization $\mathbf{e}$, 
	\begin{align*}
	\partial_{h_0}'&=\partial_0'+h_0^{-1}\cdot\partial h_0 \\
	&=\partial_0'+\Big(\beta-Y_\beta+\frac{2H_\beta}{\ln|z|^2}+\frac{2X_\beta}{(\ln|z|^2)^2}\Big)\frac{dz}{z},
	\end{align*}
	so the Chern connection is
	\begin{align*}
	D_{h_0}=d+\Big(\beta-Y_\beta+\frac{2H_\beta}{\ln|z|^2}+\frac{2X_\beta}{(\ln|z|^2)^2}\Big)\frac{dz}{z},
	\end{align*}
	and the curvature $F_{h_0}(\mathbf{e})$ under the trivialization $\mathbf{e}$ has the expression
	\begin{align*}
	F_{h_0}(\mathbf{e})=(D_{h_0})^2&=-{\bar z}\partial_{\bar z}\Big(\beta-Y_\beta+\frac{2H_\beta}{\ln|z|^2}+\frac{2X_\beta}{(\ln|z|^2)^2}\Big)\frac{dz\wedge d\bar z}{|z|^2}\\
	&=\Big(\frac{2H_\beta}{(\ln|z|^2)^2}+\frac{4X_\beta}{(\ln|z|^2)^3}\Big)\frac{dz\wedge d\bar z}{|z|^2}.
	\end{align*}
	Under the new trivialization $\mathbf{e}_0$, the curvature $F_{h_0}(\mathbf{e}_0)$ as the following expression
	\begin{align*}
 F_{h_0}(\mathbf{e}_0)=g_0^{-1}F_{h_0}(\mathbf{e})g_0=\frac{2H_\beta}{(\ln|z|^2)^2}\frac{dz\wedge d\bar z}{|z|^2},
	\end{align*}
	and the acceptability condition immediately follows.
\end{proof}

As introduced in \S\ref{subsect_prel_ana}, given the metric $h_0$, the meromorphic connection $D$ determines a tuple $(E,\partial''_{h_0},\phi_{h_0},h_0)$ (see \eqref{eq_metric_Higgs}). Let $D''_{h_0} : = \partial_{h_0}''+\phi_{h_0}$. Now we will check that the pseudo-curvature $F_{D''_{h_0}} =(D''_{h_0})^2$ is trivial, which implies that the tuple $(E,\partial''_{h_0},\phi_{h_0},h_0)$ is a metrized $G$-Higgs bundle.

\begin{lem}
	We have $F_{D''_{h_0}}=0$.
\end{lem}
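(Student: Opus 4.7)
The plan is to verify $F_{D''_{h_0}}=0$ by an explicit local computation in the orthonormal frame $\mathbf{e}_0 = \mathbf{e}\cdot g_0$, in which the metric $h_0$ becomes the identity matrix and Hermitian adjoints are simply conjugate transposes. First I would use the conjugation identities of the preceding lemma to compute the gauge-transformed connection form $\omega_0 = g_0^{-1}\omega g_0 + g_0^{-1}dg_0$ in this frame and split it into its $(1,0)$- and $(0,1)$-parts. The operator $\delta'$ (resp.\ $\delta''$), characterized by $\delta'+d''$ (resp.\ $d'+\delta''$) preserving $h_0=I$, is then read off from the anti-Hermiticity of the corresponding total connection form, and $\phi_{h_0}=\tfrac12(d'-\delta')$, $\partial''_{h_0}=\tfrac12(d''+\delta'')$ become explicit. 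The crucial structural inputs are that $\beta$ lies in the center of $\mathfrak{l}_\beta$ and hence commutes with the entire $\mathfrak{sl}_2$-triple $(X_\beta,H_\beta,Y_\beta)\subset\mathfrak{l}_\beta$; that (by Jordan decomposition together with Jacobson--Morozov) the $\mathfrak{sl}_2$-triple can be chosen so that $s_\beta$ centralizes it; and that the polar coefficients $B_{-i}$ (hence $Q'(z)$) centralize the triple as well, by virtue of the canonical-form requirement that they pairwise commute with $B_0=s_\beta+Y_\beta$.

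Once these explicit expressions are in hand, $F_{D''_{h_0}}=(D''_{h_0})^2$ reduces on the curve to $\partial''_{h_0}(\phi_{h_0})$, since $(\partial''_{h_0})^2$ and $\phi_{h_0}^2$ vanish automatically. Writing $\partial''_{h_0}(\phi_{h_0})=\bar\partial\phi_{h_0}+[A,\phi_{h_0}]$ with $A$ the connection form of $\partial''_{h_0}$, the $\bar\partial$-derivative picks up only the non-holomorphic summand of $\phi_{h_0}$, a multiple of $Y_\beta/L$ with $L=-\ln|z|^2$; using $\partial_{\bar z}(1/L)=1/(\bar z L^2)$ this yields a $(1,1)$-form proportional to $Y_\beta/(z\bar z L^2)$. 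In the commutator $[A,\phi_{h_0}]$, all $\mathfrak{t}$-valued pieces commute among themselves, and by the centralizer properties above they also commute with $Y_\beta$; hence the only surviving term is $[H_\beta/L,\,2Y_\beta/L]$, which via $[H_\beta,Y_\beta]=-2Y_\beta$ produces precisely the opposite multiple of $Y_\beta/(z\bar z L^2)$ and cancels $\bar\partial\phi_{h_0}$.

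The main obstacle is the centralizer bookkeeping needed to ensure that, after all the conjugations by $g_0$, the only non-vanishing commutator contribution comes from $[H_\beta,Y_\beta]$: the model metric $h_0$ is engineered so that this single $\mathfrak{sl}_2$-relation delivers the cancellation. An equivalent route is to verify instead the Hitchin-type identity $F_{h_0}+[\phi_{h_0},\phi_{h_0}^*]=0$ (by an analogous computation, this time using $[X_\beta,Y_\beta]=H_\beta$) and then conclude $F_{D''_{h_0}}=0$ from the decomposition $F_D = F_{h_0}+2F_{D''_{h_0}}+[\phi_{h_0},\phi_{h_0}^*]$ together with the flatness $F_D=0$ of an integrable connection on a curve.
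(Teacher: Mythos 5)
Your proposal follows essentially the same route as the paper: pass to the orthonormal frame $\mathbf{e}_0=\mathbf{e}\cdot g_0$, read off $\partial''_{h_0}$ and $\phi_{h_0}$ from the anti-Hermiticity characterization of $\delta'$, $\delta''$, discard the $\mathfrak{t}$-valued irregular part (which commutes with everything relevant), and observe that $\bar z\partial_{\bar z}$ of the $Y_\beta/\ln|z|^2$ term cancels exactly against the commutator of the $H_\beta$- and $Y_\beta$-terms via $[H_\beta,Y_\beta]=-2Y_\beta$. This is precisely the paper's computation of $F_{D''_{h_0}}=-\bigl(\bar z\partial_{\bar z}M^{\mathrm{reg}}+[K^{\mathrm{reg}},M^{\mathrm{reg}}]\bigr)\frac{dz\wedge d\bar z}{|z|^2}$, so no further comparison is needed.
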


\begin{proof}
	Decompose $D=d_0'+d_0''$ into different types, then under the trivialization $\mathbf{e}_0$, we have
	\begin{align*}
	d_0'&=\partial_0'+g_0^{-1}\cdot(s_\beta+Y_\beta+zQ'(z))\cdot g_0\frac{dz}{z}+g_0^{-1}\partial g_0\\
	&=\partial_0'+(s_\beta+zQ'(z))\frac{dz}{z}+g_0^{-1}Y_\beta g_0\frac{dz}{z}+g_0^{-1}\partial g_0\\
	&=\partial_0'+(s_\beta+zQ'(z))\frac{dz}{z}-\frac{\beta}{2}\frac{dz}{z}-\frac{2Y_\beta-H_\beta}{2\ln|z|^2}\frac{dz}{z};\\
	d_0''&=\partial_0''+g_0^{-1}\bar\partial g_0\\
	&=\partial_0''-\frac{\beta}{2}\frac{d\bar z}{\bar z}-\frac{H_\beta+2X_\beta}{2\ln|z|^2}\frac{d\bar z}{\bar z}.
	\end{align*}
	Denote respectively by $\delta_{h_0}'$ and $\delta_{h_0}''$ the operators of type $(1,0)$ and $(0,1)$ so that both $d_0'+\delta_{h_0}''$ and $d_0''+\delta_{h_0}'$ preserve the metric $h_0$. Since $\mathbf{e}_0$ is orthonormal,
	\begin{align*}
	\partial_{h_0}'&=\frac{1}{2}(d_0'+\delta_{h_0}')=\partial_0'+\frac{1}{2}(d_0'-(\overline{d_0''})^\mathrm{T})=\partial_0'+\frac{1}{2}\Big(s_\beta+zQ'(z)+\frac{H_\beta}{\ln|z|^2}\Big)\frac{dz}{z};\\
	\partial_{h_0}''&=\frac{1}{2}(d_0''+\delta_{h_0}'')=\partial_0''+\frac{1}{2}(d_0''-(\overline{d_0'})^\mathrm{T})=\partial_0''-\frac{1}{2}\Big(\bar s_\beta+\bar z\overline{Q'(z)}+\frac{H_\beta}{\ln|z|^2}\Big)\frac{d\bar z}{\bar z};\\
	\phi_{h_0}&=\frac{1}{2}(d_0'-\delta_{h_0}')=\frac{1}{2}(d_0'+(\overline{d_0''})^\mathrm{T})=\frac{1}{2}\Big(s_\beta+zQ'(z)-\beta-\frac{2Y_\beta}{\ln|z|^2}\Big)\frac{dz}{z};\\
	\phi_{h_0}^*&=\frac{1}{2}(d_0''-\delta_{h_0}'')=\frac{1}{2}(d_0''+(\overline{d_0'})^\mathrm{T})=\frac{1}{2}\Big(\bar s_\beta+\bar{z}\overline{Q'(z)}-\beta-\frac{2X_\beta}{\ln|z|^2}\Big)\frac{d\bar z}{\bar z}.
	\end{align*}
	Write 
	\begin{align*}
	\partial_{h_0}''&=:\partial_0''-\frac{1}{2}\bar{z}\overline{Q'(z)}\frac{d\bar z}{\bar z}+K^{\mathrm{reg}}\frac{d\bar z}{\bar z},\\
	\phi_{h_0}&=:\frac{1}{2}zQ'(z)\frac{dz}{z}+M^{\mathrm{reg}}\frac{dz}{z},
	\end{align*}
	with 
	\begin{align*}
	K^{\mathrm{reg}}=-\frac{1}{2}\bar s_\beta-\frac{H_\beta}{2\ln|z|^2},\quad M^{\mathrm{reg}}=\frac{1}{2}(s_\beta-\beta)-\frac{Y_\beta}{\ln|z|^2}.
	\end{align*}
	Then
	\begin{align}\label{pseudo}
	F_{D''_{h_0}}=\partial_{h_0}''(\phi_{h_0})=-\Big(\bar{z}\partial_{\bar z}M^{\mathrm{reg}}+[K^{\mathrm{reg}},M^{\mathrm{reg}}]\Big)\frac{dz\wedge d\bar z}{|z|^2},
	\end{align}
	so we can simplify the calculation by taking $Q(z)=0$. We now have
	\begin{align*}
	\bar{z}\partial_{\bar z}M^{\mathrm{reg}}&=\frac{Y_\beta}{(\ln|z|^2)^2};\\
	[K^{\mathrm{reg}},M^{\mathrm{reg}}]&=\Big[\frac{H_\beta}{2\ln|z|^2},\frac{Y_\beta}{\ln|z|^2}\Big]=-\frac{Y_\beta}{(\ln|z|^2)^2},
	\end{align*}
	and substituting into \eqref{pseudo}, we have $F_{D''_{h_0}}=0$.
\end{proof}

Note that
\begin{align*}
Q'(z)=\sum_{i=1}^n\frac{B_{-i}}{z^{i+1}}
\end{align*}
with each $B_i\in\mathfrak{t}$. Consider the new orthonormal basis
\begin{align*}
\mathbf{e}_1:=\mathbf{e}_0\cdot g_1
\end{align*}
with 
\begin{align*}
g_1=\exp\Big(\sum_{i=1}^n\Big(\frac{B_{-i}}{2i}z^{-i}-\frac{\bar{B}_{-i}}{2i}\bar{z}^{-i}\Big)\Big),
\end{align*}
and then under the new trivialization $\mathbf{e}_1$, we have
\begin{align*}
g_1^{-1}(\partial_{h_0}'+\partial_{h_0}'')g_1+g_1^{-1}dg_1 &= d+\frac{1}{2}\Big(s_\beta+\frac{H_\beta}{\ln|z|^2}\Big)\frac{dz}{z}-\frac{1}{2}\Big(\bar s_\beta+\frac{H_\beta}{\ln|z|^2}\Big)\frac{d\bar{z}}{\bar z};\\
g_1^{-1}(\phi_{h_0}+\phi_{h_0}^*)g_1 &= \phi_{h_0}+\phi_{h_0}^*.
\end{align*}
Namely the holomorphic structure of the inducing Higgs bundle has the expression
\begin{align*}
\partial''=\partial_0''-\frac{1}{2}\Big(\bar s_\beta+\frac{H_\beta}{\ln|z|^2}\Big)\frac{d\bar{z}}{\bar z}.
\end{align*}
Finally, we take the following holomorphic basis
\begin{align*}
\mathbf{e}_2=\mathbf{e}_1\cdot g_2
\end{align*}
with 
\begin{align*}
g_2=|z|^{\bar s_\beta}(-\ln|z|^2)^{H_\beta/2}e^{X_\beta},
\end{align*}
and then 
\begin{align}\label{weight-higgs}
|\mathbf{e}_2|_{h_0}^2=|z|^{s_\beta+\bar s_\beta}e^{Y_\beta}(-\ln|z|^2)^{H_\beta}e^{X_\beta}.
\end{align}
Under the trivialization $\mathbf{e}_2$, the Higgs field has the form
\begin{align}\label{residue-higgs}
\phi=g_2^{-1}\phi_{h_0}g_2=\left[ \frac{1}{2}(s_\beta-\beta+zQ'(z)) +(Y_\beta-H_\beta+X_\beta) \right]\frac{dz}{z}.
\end{align}
In conclusion, we have the following table between Dolbeault and de Rham data:
\bigskip
\begin{center}
	\begin{tabular}{|c|c|c|}
		\hline
		\rule{0pt}{2.6ex} & Dolbeault &\ \ \ de Rham\ \ \  \\[0.5ex]
		\hline
		\rule{0pt}{2.6ex} weights & $\frac{1}{2}(s_\beta+\bar s_\beta)$ & $\beta$  \\[0.7ex]
		\hline
		\rule{0pt}{2.6ex} residues $\backslash$ monodromies & $\ \frac{1}{2}(s_\beta-\beta)+(Y_\beta-H_\beta+X_\beta)\ $ & $s_\beta+Y_\beta$ \\[0.7ex]
		\hline
		\rule{0pt}{2.6ex} irregular types & $\frac{1}{2}Q$ & $Q$ \\[0.7ex]
		\hline
	\end{tabular}
\end{center}
\medskip
In particular, when $Y_\beta=0$, we recover Biquard--Boalch's data \cite{BB}:
\medskip
\begin{center}
	\begin{tabular}{|c|c|c|}
		\hline
		\rule{0pt}{2.6ex} & \ \ Dolbeault\ \ & \ \ \ de Rham\ \ \  \\[0.5ex]
		\hline
		\rule{0pt}{2.6ex} weights & $\ \ \frac{1}{2}(s_\beta+\bar s_\beta)\ \ $ & $\beta$  \\[0.7ex]
		\hline
		\rule{0pt}{2.6ex} residues $\backslash$ monodromies & $\frac{1}{2}(s_\beta-\beta)$ & $s_\beta$ \\[0.7ex]
		\hline
		\rule{0pt}{2.6ex} irregular types & $\frac{1}{2}Q$ & $Q$ \\[0.7ex]
		\hline
	\end{tabular}
\end{center}

\section{de Rham Category vs. Betti Category}\label{sect_dR_Betti}
In this section, we give the correspondence between merohoric connections and filtered Stokes $G$-local systems. By introducing a notion of  stability condition for filtered Stokes $G$-local systems (Definition \ref{defn_stab_betti}), we prove a one-to-one correspondence between $R$-stable merohoric connections of degree zero and $R$-stable filtered Stokes $G$-local systems of degree zero (Theorem \ref{thm_dR_Betti}).

\subsection{Stokes Local System}\label{subsect_prel_stokes}

We first briefly review Stokes $G$-local systems and details can be found in \cite{Boalch2014}. In this paper, a Stokes local system will be actually referring to the corresponding Stokes representation. Let $(X,\boldsymbol{D})$ be as before, and let $\boldsymbol{n}:=\{n_x, x \in \boldsymbol{D}\}$ be a set of positive integers. Let $\boldsymbol{Q}=\{Q_x, x \in \boldsymbol{D}\}$ be a set of irregular types so that $\deg(Q_x)=n_x-1$ for each $x \in \boldsymbol{D}$. Let $(E,D)$ be a meromorphic $G$-connection on $(X,\boldsymbol{D})$ with irregular type $\boldsymbol{Q}$. Let $H_x \subseteq G$ be the stabilizer of $Q_x$ under the adjoint action, i.e. $H_x=Z_G(Q_x)$. Denote by $T_x$ the maximal torus in $H_x$. Let $\widehat{X}$ be the real oriented blow-up of $X$ at each $x\in\boldsymbol{D}$. It is the surface obtained from $X$ by replacing each $x\in\boldsymbol{D}$ with an oriented boundary circle $\partial_x$, and points of $\partial_x$ correspond to real oriented directions emanating from $x$. 

We first work locally around one puncture $x$, and to simplify the notation, we omit the subscript $x$. Each root $r \in \mathcal{R}$ determines a meromorphic function 
\begin{align*}
q_r (z):=r(Q(z)),
\end{align*}
and an \emph{anti-Stokes direction} (or \emph{singular direction}) supported by $r$ is a direction $d\in\partial$ so that the meromorphic function $\exp(q_r (z))$ has maximal decay as $z\to0$ along the direction $d$. If we denote by $\frac{c_r}{z^k}$ the most singular term of $q_r (z)$, then along an anti-Stokes direction $d\in\partial$, the term $\frac{c_r}{z^k}$ is real and negative. Denote by $\mathbb{A} \subset\partial$ the set of all anti-Stokes directions on $\partial$. Note that when we rotate an anti-Stokes direction $d\in\mathbb{A}$ by an angle $\pi/k$, the corresponding direction on $\partial$ is still an anti-Stokes direction. This implies that $\mathbb{A}$ has $\pi/k$ rotational symmetry and thus $l:=\#(\mathbb{A})/2k$ is an integer. In particular, if the leading coefficient $A_{n-1}$ of the irregular type $Q$ is regular semisimple, then $\# \mathbb{A}/2(n-1)$ is an integer. Any $l$  consecutive anti-Stokes direction of $\mathbb{A}$ is called a \emph{half-period}.

For an anti-Stokes direction $d\in\mathbb{A}$, denote by $\mathcal{R}(d)\subset \mathcal{R}$ the set of all roots that support $d$, then the following product map 
\begin{align}\label{sto-group}
\phi: \prod_{r \in\mathcal{R}(d)}U_r \longrightarrow G
\end{align}
is an algebraic isomorphism of spaces (not as groups) into its image, where each $U_r:=\exp(\mathfrak{g}_r)\subset G$ is the unipotent group associated to $r$. The image of $\phi$ is called the \emph{Stokes group} determined by $d$, and we denote it by $\mathbb{S}\mathrm{to}_d$. For an irregular type $Q$, we define the \emph{space of Stokes data} as
\begin{align*}
\mathbb{S}\mathrm{to}(Q):=\prod_{d\in\mathbb{A}}\mathbb{S}\mathrm{to}_d.
\end{align*}
It follows from \cite{Boalch2014} that the image of \eqref{sto-group} does not depend on the order of the product, so it is a well-defined map.

The Stokes data $\mathbb{S}\mathrm{to}(Q)$ can be characterized via unipotent radicals of $G$ defined by half-periods \cite[\S 7]{Boalch2014}. By marking all the $\#\mathbb{A}_x=2kl$ anti-Stokes directions in consecutive order as (by choosing an arbitrary one as $d_1$)
\begin{align*}
d_1, \cdots, d_{2kl},
\end{align*}
the half-period $\boldsymbol{d}_+:=\{d_1,\cdots,d_l\}$ yields a parabolic subgroup $P_+\subset G$ which contains $H$ as a Levi subgroup, and the opposite parabolic $P_-\subset G$ with Levi subgroup $H$ is associated to another half-period $\boldsymbol{d}_-:=\{d_{l+1},\cdots,d_{2l}\}$. Denote respectively by $U_+$ and $U_-$ the unipotent radicals of $P_+$ and $P_-$. Then, we get
\begin{align*}
\mathbb{S}\mathrm{to}(Q)\xrightarrow{\cong} (U_+\times U_-)^k.
\end{align*}
Based on this property, we define
\begin{align*}
\mathcal{A}(Q): = G \times H \times \mathbb{S}{\rm to}(Q) \cong G \times H \times (U_+\times U_-)^k.
\end{align*}

Now we move to the global picture. Suppose that there are $m$ points in $\boldsymbol{D}$, i.e. $\boldsymbol{D}=\{x_1,\dots,x_m\}$. For each $x\in\boldsymbol{D}$, draw a concentric circle (called a \emph{halo}) on $\widehat{X}$ near $\partial_x$, and then puncture the halo at $\#\mathbb{A}_x$ distinct points in accordance with the $\#\mathbb{A}_x$ anti-Stokes directions so that all the $\#\mathbb{A}_x$ auxiliary small cilia between each anti-Stokes direction and its nearby puncture do not cross. Denote by $\mathbb{H}_x$ the region between the halo and $\partial_x$ (as a tubular neighborhood of $\partial_x$). The resulting punctured curve $X_{\boldsymbol{Q}}\subset\widehat{X}$ is called the \emph{irregular curve} determined by the irregular type $\boldsymbol{Q}$. Note that the irregular types defined in this paper (see Definition \ref{defn_irregular_type}) are unramified, but the irregular curves can be defined even for ramified irregular types (see {\cite[Remark 8.6]{Boalch2014}} and {\cite[\S3.5]{BY15}} for the description of ramified irregular types).

Now for each boundary circle $\partial_x$ of $X_{\boldsymbol{Q}}$, we choose a base point $b_x$, and denote by $\boldsymbol{b}:=\{b_x, x\in\boldsymbol{D}\}$ the set of base points. Let $\Pi:=\Pi_1(X_{\boldsymbol{Q}},\boldsymbol{b})$ be the fundamental groupoid of $X_{\boldsymbol{Q}}$ with $\boldsymbol{b}$ as the set of base points. Here is an explicit description of the generators of $\Pi$:
\begin{enumerate}
	\item $\alpha_1,\beta_1,\cdots,\alpha_g,\beta_g$, the loops determined by the genus of $X$;
	\item for each $x\in\boldsymbol{D}$, the simple closed loop $\gamma_x$ based at $b_x$ going once around $\partial_x$;
	\item for each $d\in\mathbb{A}_x$, the loop $\tilde{\gamma}_d$ based at $b_x$ going once around the nearby puncture $\tilde{d}$ of $d$ so that $\tilde{d}$ is the only puncture inside $\tilde{\gamma}_d$;
	\item from distinct base points $b_x$ and $b_{x'}$, the simple path $\gamma_{xx'}$ connecting them.
\end{enumerate}
\noindent Let $\mathrm{Hom}(\Pi,G)$ be the space of $G$-representations of $\Pi$. 

An element $\rho\in\mathrm{Hom}(\Pi,G)$ is called a \emph{Stokes $G$-representation} or a \emph{Stokes $G$-local system} on $X_{\boldsymbol{Q}}$ if for each $x\in\boldsymbol{D}$ and $d\in\mathbb{A}_x$, we have $\rho(\gamma_x)\in H_x$ and $\rho(\tilde{\gamma}_d)\in\mathbb{S}\mathrm{to}_d$. Denote by $\mathrm{Hom}_{\mathbb{S}}(\Pi,G)$ the space of all Stokes $G$-representations. Recall that $H_x$ is the stabilizer of $Q_x$ for $x \in \boldsymbol{D}$, and we define
\begin{align*}
\boldsymbol{H}:=\prod_{x \in \boldsymbol{D}} H_x = H_{x_1} \times \dots \times H_{x_m}.
\end{align*} 
The space of Stokes $G$-local systems $\mathrm{Hom}_{\mathbb{S}}(\Pi,G)$ is a smooth affine variety and furthermore, a quasi-Hamiltonian $\boldsymbol{H}$-space \cite[Theorem 8.2]{Boalch2014}, where the $\boldsymbol{H}$-action is given by
\begin{align*}
((k_1,\cdots,k_m)\cdot\rho)(\gamma):=k_j\cdot\rho(\gamma)\cdot k_i^{-1},
\end{align*}
where $\gamma$ is any path in $\Pi$ from $b_{x_i}$ to $b_{x_j}$. 

Here is an equivalent description of ${\rm Hom}_{\mathbb{S}}(\Pi,G)$. For each $x_i\in\boldsymbol{D}$, we define 
\begin{align*}
\mathcal{A}(Q_{x_i})=G\times H_{x_i}\times\mathbb{S}\mathrm{to}(Q_{x_i}). 
\end{align*}
Then, the space of Stokes $G$-local systems $\mathrm{Hom}_{\mathbb{S}}(\Pi,G)$ can be characterized by a closed subvariety $\mathcal{U}_{\boldsymbol{Q}}$ of the affine variety $\mathcal{R}_{\boldsymbol{Q}}:=(G\times G)^g\times\mathcal{A}(Q_{x_1})\times\cdots\times\mathcal{A}(Q_{x_m})$ (see also \cite[(2.36)]{Witten2007} for the case of one irregular point):
\begin{align}\label{space-Sto}
\begin{aligned}
\mathcal{U}_{\boldsymbol{Q}}&:=\Big\{((A_i,B_i)_{1\leq i\leq g}, (C_{x_i},h_{x_i},S_{x_i,j})_{1\leq i\leq m,1\leq j\leq\#\mathbb{A}_{x_i}})\in\mathcal{R}_{\boldsymbol{Q}}\ :\ \\ \
&\hspace{10em}\prod_{i=1}^g[A_i,B_i]\prod_{i=1}^m\Big(C_{x_i}^{-1}h_{x_i}S_{x_i,\#\mathbb{A}_{x_i}}\cdots S_{x_i,1}C_{x_i}\Big)=\mathrm{Id}
\Big\},
\end{aligned}
\end{align}
the $\boldsymbol{H}$-action on $\mathrm{Hom}_{\mathbb{S}}(\Pi,G)$ is represented by the $(G\times\boldsymbol{H})$-action on $\mathcal{U}_{\boldsymbol{Q}}$ via
\begin{align*}
(g,(k_{x_i}&)_{1\leq i\leq m})\cdot((A_i,B_i)_{1\leq i\leq g}, (C_{x_i},h_{x_i},S_{x_i,j})_{1\leq i\leq m,1\leq j\leq\#\mathbb{A}_{x_i}}):=\\
&\hspace{5em}((gA_ig^{-1},gB_ig^{-1})_{1\leq i\leq g}, (k_{x_i}C_{x_i}g^{-1},k_{x_i} h_{x_i} k^{-1}_{x_i},k_{x_i}S_{x_i,j}k_{x_i}^{-1})_{1\leq i\leq m,1\leq j\leq\#\mathbb{A}_{x_i}}).
\end{align*}

\begin{thm}[Theorem A.3 and Corollary A.4 in \cite{Boalch2014}]\label{thm_Boal_equiv}
Let $G$ be a complex reductive group. There is an equivalence between the category of meromorphic $G$-connections on $X$ with irregular type $\boldsymbol{Q}$ and the category of Stokes $G$-local systems on $X_{\boldsymbol{Q}}$.
\end{thm}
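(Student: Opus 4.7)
The plan is to establish the equivalence by a local-to-global argument: treat each puncture with classical Stokes theory and then assemble the data via the fundamental groupoid $\Pi_1(X_{\boldsymbol{Q}}, \boldsymbol{b})$. The strategy adapts the classical $\mathrm{GL}_n$ theory (Hukuhara--Turrittin--Sibuya at the level of formal and sectorial solutions, and Malgrange--Sibuya for the cocycle description) to principal $G$-connections.

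First, I would work locally at each puncture $x \in \boldsymbol{D}$. Theorem \ref{thm_canonical_form} together with the irregular-type hypothesis lets me assume the connection has the normal form $d + dQ_x + \Lambda_x \frac{dz}{z}$ with $\Lambda_x$ commuting with $Q_x$, so $\Lambda_x \in \mathrm{Lie}(H_x)$. The formal fundamental solution $\exp(Q_x(z))\, z^{\Lambda_x}$ lifts uniquely to a genuine holomorphic horizontal section on each sector of opening less than $\pi/k$ between consecutive anti-Stokes directions (Sibuya's theorem), and on overlapping sectors two such lifts differ by an element of $G$ whose prescribed asymptotic behavior along each root direction places it in the Stokes group $\mathbb{S}\mathrm{to}_d$. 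This produces the Stokes data $S_{x,j}$; the formal monodromy around $x$ yields $h_x = \exp(2\pi\sqrt{-1}\,\Lambda_x) \in H_x$.

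Next I would globalize. Fix base points $b_x \in \partial_x$ together with cilia back to the selected sectors. Parallel transport along the genus loops $\alpha_i,\beta_i$ and the paths $\gamma_{xx'}$ of \S\ref{subsect_prel_stokes} yields the remaining generators of $\Pi$. Collecting these with the transition elements $C_x \in G$ (which compare the canonical trivialization at $x$ with the chosen frame at $b_x$), the formal monodromies $h_x$, and the Stokes factors $(S_{x,j})$ produces a tuple in $\mathcal{U}_{\boldsymbol{Q}}$ satisfying the monodromy relation displayed in \eqref{space-Sto}; this is the associated Stokes $G$-local system. For the inverse functor, I would use the Malgrange--Sibuya construction: a Stokes cocycle determines, by asymptotic resummation, sectorial germs of trivializations on each formal punctured disk that glue to a holomorphic $G$-bundle with meromorphic connection whose irregular type is $Q_x$, and these local models are patched to the topological local system on $X_{\boldsymbol{D}}$ via the transition data $C_x$. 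Functoriality on morphisms and the fact that the two constructions are mutually inverse both follow from the uniqueness of sectorial lifts.

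The main obstacle is ensuring that the classical analytic constructions, which are usually stated for $\mathrm{GL}_n(\mathbb{C})$, descend faithfully to the principal $G$-setting. I would handle this through two complementary observations. First, the Stokes groups $\mathbb{S}\mathrm{to}_d \subset G$ are defined intrinsically as the image of the product map $\prod_{r \in \mathcal{R}(d)} U_r \to G$, so that once the $\mathrm{GL}_n$-Stokes data are known to respect tensor products and duals, they automatically land in the right subgroups of $G$. Second, for the existence of sectorial lifts, one can either apply the $\mathrm{GL}(V)$-theorem to every finite-dimensional representation $V$ of $G$ and invoke Tannakian reconstruction to descend, or, more directly, run the Sibuya fixed-point iteration inside $G$ using the exponential map on the filtration of $\mathfrak{g}(\!(z)\!)$ by Stokes valuations. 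This is essentially Boalch's proof in \cite[Appendix]{Boalch2014}, which the theorem cites.
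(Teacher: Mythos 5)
The paper does not actually prove this statement: it is imported verbatim as Theorem A.3 and Corollary A.4 of Boalch's \emph{Geometry and braiding of Stokes data}, and the authors use it as a black box (the only added remark being that the ${\rm GL}_n$ case is classical). So there is no internal proof to compare against; the relevant benchmark is Boalch's appendix, and your outline is a faithful reconstruction of that argument --- local normal form, canonical sectorial lifts of the formal solution, Stokes factors on adjacent sectors, formal monodromy in $H_x$, globalization over the fundamental groupoid of $X_{\boldsymbol{Q}}$ with the relation defining $\mathcal{U}_{\boldsymbol{Q}}$, and the Malgrange--Sibuya direction for the inverse functor, with Tannakian or direct Lie-theoretic descent from ${\rm GL}_n$ to $G$. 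Two small imprecisions are worth flagging. First, uniqueness of the sectorial lift is \emph{not} a consequence of existence on a narrow sector between consecutive anti-Stokes directions (such sectors have opening $\pi/(kl)$, and asymptotic expansions on narrow sectors do not pin down the lift); the canonical choice comes from requiring the lift to be asymptotic to the formal transformation on the associated \emph{supersector}, which extends by $\pi/2k$ beyond each bounding anti-Stokes ray, or equivalently from multisummability along the directions of the sector --- this is precisely where the Stokes factors acquire their triangular form. Second, your formal monodromy $h_x=\exp(2\pi\sqrt{-1}\,\Lambda_x)$ differs by a sign in the exponent from the convention the paper records in its de Rham/Betti table ($\exp(-2\pi\sqrt{-1}\,\nabla_{\beta_x})$); this is only an orientation convention for the loop $\gamma_x$, but it should be fixed consistently with the relation in \eqref{space-Sto}. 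Neither point is a gap in the mathematics, only in the statement of the lifting theorem you invoke.
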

The theorem for the case of ${\rm GL}_n(\mathbb{C})$ is well-known (see \cite[Appendix]{Su2019} for instance). Here is the table for the relation of residues (monodromies) and irregular types between meromorphic $G$-connections and Stokes $G$-local systems around a particular puncture $x$.
\begin{center}
	\begin{tabular}{|c|c|c|}
		\hline
		\rule{0pt}{2.6ex} &\ \ \  de Rham\ \ \ & Betti  \\[0.5ex]
		\hline
		\rule{0pt}{2.6ex} residues $\backslash$ monodromies & $\nabla_{\beta_x}$ & $h_x = \exp(-2\pi \sqrt{-1}\nabla_{\beta_x})$ \\[0.7ex]
		\hline
		\rule{0pt}{2.6ex} irregular types & $Q_x$ & $Q_x$ \\[0.7ex]
		\hline
	\end{tabular}
\end{center}

Now we review the definition of irreducibility for Stokes $G$-local systems \cite[\S 9]{Boalch2014}. For each puncture $x \in \boldsymbol{D}$, we fix a parabolic group $P_x$. We say that the collection $\boldsymbol{P}=\{P_{x_i}, 1 \leq i \leq m\}$ is \emph{compatible} with an element $\rho \in {\rm Hom}_{\mathbb{S}}(\Pi,G)$, if we have
\begin{align*}
\rho(\gamma) P_{x_i} \rho(\gamma)^{-1} = P_{x_j}
\end{align*}
for any path $\gamma$ in $X_{\boldsymbol{Q}}$ from $b_i$ to $b_j$ (for arbitrary $i,j$). A collection of parabolic subgroups $\boldsymbol{P}$ is called \emph{invariant}, if the maximal torus $T_x$ (of $H_x$) is included in $P_x$ for each $x \in \boldsymbol{D}$. Furthermore, it is called \emph{proper}, if each $P_x$ is a proper parabolic subgroup of $G$. Based on these definitions, Boalch introduced the definition of an \emph{irreducible Stokes $G$-local system}.

\begin{defn}\label{defn_Boal_irre_rep}
A Stokes $G$-local system $\rho \in {\rm Hom}_{\mathbb{S}}(\Pi,G)$ is called \emph{reducible} if there is an invariant proper collection $\boldsymbol{P}$ of parabolic subgroups compatible with $\rho$. Otherwise, it is called \emph{irreducible}. 
\end{defn}

\subsection{Filtered Stokes $G$-local Systems and Betti Category}\label{subsect_fil_stokes}

Let $\boldsymbol\theta=\{\theta_x, x \in \boldsymbol{D}\}$ be a collection of weights. Denote by $P_{\theta_x} \subseteq G$ the corresponding parabolic subgroup determined by the weight $\theta_x$, and let $L_{\theta_x}$ be the Levi component of $P_{\theta_x}$. Note that an element $\rho \in {\rm Hom}_{\mathbb{S}}(\Pi,G)$ can be regarded as a tuple 
\begin{align*}
	((A_i,B_i)_{1\leq i\leq g}, (C_{x_i},h_{x_i},S_{x_i,j})_{1\leq i\leq m,1\leq j\leq\#\mathbb{A}_{x_i}})\in\mathcal{R}_{\boldsymbol{Q}}.
\end{align*}

\begin{defn}
A \emph{$\boldsymbol\theta$-filtered Stokes $G$-local system} is a representation $\rho \in {\rm Hom}_{\mathbb{S}}(\Pi,G)$ such that the formal monodromy $h_x$ is an element in $P_{\theta_x}$ for each $x \in \boldsymbol{D}$.
\end{defn}

\begin{rem}
Given an irregular type $Q_x$, we define its stabilizer group $H_x$. By properties of Stokes $G$-representations, the formal monodromy $h_x$ is included in $H_x$. Now for a $\boldsymbol\theta$-filtered Stokes $G$-local system, we require $h_x \in P_{\theta_x}$. Thus, the formal monodromy actually lies in $H_x \cap P_{\theta_x}$. This condition is also expected from the side of merohoric $\mathcal{G}_{\boldsymbol\theta}$-connections. Proposition \ref{prop_canonical_form_parah} shows that the coefficient $B_0$ (the residue of a meromorphic connection $d+B(z)\frac{dz}{z}$) is in $\mathfrak{p}_{\theta_x}$ and also commutes with $B_{i}$ for $i \leq -1$. This property is equivalent to saying that $\exp(-2\pi \sqrt{-1}B_0)$ is in $H_x \cap P_{\theta_x}$.
\end{rem}

\begin{lem}\label{lem_conn_and_rep_trivial}
If the weights $\boldsymbol\theta$ are trivial, i.e. $\theta_x = 0$ for any $x \in \boldsymbol{D}$, then the category of merohoric $\mathcal{G}_{\boldsymbol\theta}$-connections on $X$ with irregular type $\boldsymbol{Q}$ is equivalent to the category of $\boldsymbol\theta$-filtered Stokes $G$-local systems on $X_{\boldsymbol{Q}}$.
\end{lem}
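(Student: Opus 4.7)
The plan is to observe that when all weights $\theta_x$ vanish, both categories in the statement degenerate to the two categories related by Boalch's correspondence (Theorem \ref{thm_Boal_equiv}), so that the lemma becomes a direct consequence of that result together with some bookkeeping about the definitions.

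First I would carry out the reduction on the de Rham side. For $\theta_x = 0$ one has $m_r(\theta_x) = \lceil 0 \rceil = 0$ for every root $r$, hence the parahoric subgroup $G_{\theta_x}(K) = \langle T(R), U_r(R),\ r \in \mathcal{R}\rangle = G(R)$. Gluing this local data with the trivial description on $X_{\boldsymbol{D}}$ shows that the parahoric Bruhat--Tits group scheme $\mathcal{G}_{\boldsymbol\theta}$ is canonically isomorphic to the constant group scheme $G \times X$. Consequently, a merohoric $\mathcal{G}_{\boldsymbol\theta}$-torsor on $X$ is nothing but an algebraic principal $G$-bundle, and a merohoric $\mathcal{G}_{\boldsymbol\theta}$-connection reduces to a meromorphic $G$-connection on $X$ with pole divisor bounded by $\boldsymbol{nD}$. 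Moreover the irregular type condition of Definition \ref{defn_irregular_type_parh} specializes under trivial weight to the condition of Definition \ref{defn_irregular_type}, so the de Rham category in question coincides precisely with the category of meromorphic $G$-connections on $X$ with irregular type $\boldsymbol{Q}$ appearing in Theorem \ref{thm_Boal_equiv}.

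Next I would perform the analogous reduction on the Betti side. The parabolic $P_{\theta_x}$ attached by Notation \ref{notn_parab_levi} to the weight $\theta_x$ is determined by the roots on which $\theta_x$ is non-positive; taking $\theta_x = 0$ therefore forces $P_{\theta_x} = G$. Thus the defining constraint ``$h_x \in P_{\theta_x}$'' of a $\boldsymbol\theta$-filtered Stokes $G$-local system becomes vacuous, and the Betti category reduces to the category of ordinary Stokes $G$-local systems on $X_{\boldsymbol{Q}}$. Since morphisms on both sides are simply the morphisms of the underlying unfiltered objects, the reductions are genuine equivalences of categories.

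Combining the two reductions with Theorem \ref{thm_Boal_equiv} then yields the claim. The only mild obstacle is the compatibility of pole orders: one must check that the prescription $n_x = \deg Q_x + 1$ used throughout this paper is precisely the pole order built into Boalch's framework, which follows from Proposition \ref{prop_canonical_form_parah} (specialized to trivial $\theta_x$, so $\mathfrak{p}_{\theta_x} = \mathfrak{g}$) since the canonical form of a meromorphic $G$-connection with irregular type $Q_x$ has pole of order exactly $\deg Q_x + 1$ in the expression $d + (B_{-n_x} z^{-n_x} + \cdots + B_0)\tfrac{dz}{z}$.
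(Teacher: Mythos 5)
Your proposal is correct and follows essentially the same route as the paper: observe that trivial weights make $\mathcal{G}_{\boldsymbol\theta}$ the constant group scheme so merohoric connections are just meromorphic $G$-connections, that $P_{\theta_x}=G$ makes the filtration condition vacuous so filtered Stokes local systems are just Stokes local systems, and then invoke Theorem \ref{thm_Boal_equiv}. The paper's proof is a terser version of exactly this argument; your extra remarks on $m_r(\theta)=0$ and on pole orders are fine but not needed.
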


\begin{proof}
When the weights are trivial, merohoric $\mathcal{G}_{\boldsymbol\theta}$-connections are exactly meromorphic $G$-connections. Also, the group $P_{\theta_x}$ is exactly $G$. Thus, $\boldsymbol\theta$-filtered Stokes $G$-local systems are exactly Stokes $G$-local systems when the weights $\boldsymbol\theta$ are trivial. Therefore, the two categories are equivalent by Theorem \ref{thm_Boal_equiv}.
\end{proof}

Now we move to general weights, and we first introduce the following relation of weights between de Rham side and Betti side,
\begin{center}
	\begin{tabular}{|c|c|c|}
		\hline
		\rule{0pt}{2.6ex} & \ \ \ de Rham\ \ \ & \ \ \ Betti\ \ \  \\[0.5ex]
		\hline
		\rule{0pt}{2.6ex} weights & $\beta$ & $\gamma$  \\[0.7ex]
		\hline
		\rule{0pt}{2.6ex} residues $\backslash$ monodromies & $\nabla_{\beta}$ & $M_\gamma$ \\[0.7ex]
		\hline
		\rule{0pt}{2.6ex} irregular types & $Q$ & $Q$ \\[0.7ex]
		\hline
	\end{tabular}
\end{center}
where $\nabla_\beta=s_\beta + Y_\beta$ is the Jordan decomposition, $\gamma = \beta - \frac{1}{2}(s_\beta + \bar{s}_\beta)$ and $M_\gamma = \exp(-2\pi \sqrt{-1}\nabla_\beta)$. 

\begin{cor}\label{cor_con_and_rep}
The category of merohoric $\mathcal{G}_{\boldsymbol\beta}$-connections on $X$ with irregular type $\boldsymbol{Q}$ and Levi factors of residues $\nabla_{\boldsymbol\beta}$ is equivalent to the category of $\boldsymbol\gamma$-filtered Stokes $G$-local systems on $X_{\boldsymbol{Q}}$ with Levi factors of formal monodromies $M_{\boldsymbol\gamma}$.
\end{cor}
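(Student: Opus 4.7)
The plan is to build on Lemma \ref{lem_conn_and_rep_trivial} and Theorem \ref{thm_Boal_equiv} (Boalch's equivalence in the trivial-weight case) by tracking how the parahoric refinement on the de Rham side translates into the filtration condition on the Betti side. Concretely, starting with a merohoric $\mathcal{G}_{\boldsymbol\beta}$-connection $(\mathcal{V},\nabla)$ on $X$ with irregular type $\boldsymbol Q$ and prescribed Levi factors of residues $\nabla_{\boldsymbol\beta}$, I restrict to $X_{\boldsymbol D}$ to get a meromorphic $G$-connection with irregular type $\boldsymbol Q$, and apply Theorem \ref{thm_Boal_equiv} to obtain a Stokes $G$-local system $\rho$ with formal monodromies $h_x=\exp(-2\pi\sqrt{-1}\nabla_{\beta_x})$. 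The content of the corollary is that the parahoric structure on $\mathcal{V}$ corresponds precisely to a choice of weight $\gamma_x$ making $\rho$ a $\boldsymbol\gamma$-filtered Stokes $G$-local system, and that this correspondence is the functorial inverse of the one extending $\rho$ to a merohoric $\mathcal{G}_{\boldsymbol\beta}$-connection.

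The local key lemma will be the computation that $h_x\in P_{\gamma_x}$ for $\gamma_x:=\beta_x-\tfrac12(s_{\beta_x}+\bar s_{\beta_x})$. Using Proposition \ref{prop_canonical_form_parah}, I may assume locally that $\nabla$ is in canonical form $d+(s_{\beta_x}+Y_{\beta_x}+zQ_x'(z))\frac{dz}{z}$ with $s_{\beta_x}\in\mathfrak t$, $Y_{\beta_x}$ nilpotent commuting with $s_{\beta_x}$, and the Levi factor $s_{\beta_x}+Y_{\beta_x}\in\mathfrak p_{\beta_x}$. Because $s_{\beta_x}$ and $Y_{\beta_x}$ commute, the formal monodromy factors as $\exp(-2\pi\sqrt{-1}s_{\beta_x})\exp(-2\pi\sqrt{-1}Y_{\beta_x})$; the unipotent factor lies in the parabolic determined by $Y_{\beta_x}$, while the toral factor lies in $P_{\gamma_x}$ because for any root $r$ with $r(\gamma_x)>0$ one has $r(\beta_x)-r(s_{\beta_x})\in\mathbb{Z}_{>0}$, so $e^{-2\pi\sqrt{-1}r(s_{\beta_x})}=e^{-2\pi\sqrt{-1}r(\beta_x)}$ and the root space directions in $\mathfrak p_{\gamma_x}$ are compatible with the adjoint action. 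Conversely, starting with $\gamma_x$ and a formal monodromy $M_{\gamma_x}\in P_{\gamma_x}$ whose Jordan decomposition exponentiates from $s_{\beta_x}+Y_{\beta_x}$, one recovers $\beta_x$ via $\beta_x=\gamma_x+\tfrac12(s_{\beta_x}+\bar s_{\beta_x})$, and this weight is a genuine (co)character giving a parahoric refinement of the underlying meromorphic connection; the value $\tfrac12(s_{\beta_x}+\bar s_{\beta_x})$ is precisely the real part needed so that $z^{\beta_x}g(z)z^{-\beta_x}$ has a limit for the chosen Levi factor.

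Once the local bijection between admissible pairs $(\beta_x,\nabla_{\beta_x})$ and $(\gamma_x,M_{\gamma_x})$ is established, the functorial equivalence follows by assembling the local data: apply Theorem \ref{thm_Boal_equiv} on $X_{\boldsymbol D}$, impose the prescribed Levi factors on both sides, and glue using that morphisms in both categories are determined by morphisms of the underlying $G$-objects on $X_{\boldsymbol D}$ intertwining the formal structure at each puncture. Functoriality is automatic from Boalch's equivalence once the puncture-local conditions match. Lemma \ref{lem_conn_and_rep_trivial} is recovered as the special case $\beta_x=0$, $\gamma_x=-\tfrac12(s_{\beta_x}+\bar s_{\beta_x})$, and when the residues themselves are trivial, $\gamma_x=0$ as well.

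The main obstacle I anticipate is verifying the containment $\exp(-2\pi\sqrt{-1}(s_{\beta_x}+Y_{\beta_x}))\in P_{\gamma_x}$ in sufficient generality, i.e.\ carefully tracking which root subgroups of $G$ appear in $P_{\beta_x}$ versus $P_{\gamma_x}$ and reconciling the shift by $\tfrac12(s_{\beta_x}+\bar s_{\beta_x})$. The shift arises because the real part of a co-character pairing equals $\tfrac12$ times its sum with its complex conjugate, and this is exactly the difference between the parahoric weight needed to bound $z^{\beta_x}\nabla_{\beta_x}z^{-\beta_x}$ and the Betti weight recording the filtration of $M_{\gamma_x}$. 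Handling this bookkeeping uniformly, especially when $s_{\beta_x}$ has components outside the real cocharacter lattice, is the delicate point; once done, the rest of the argument is a direct assembly using Theorem \ref{thm_Boal_equiv} puncture by puncture.
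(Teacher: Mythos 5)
Your overall strategy matches the paper's: reduce to Boalch's equivalence (Theorem \ref{thm_Boal_equiv} via Corollary \ref{cor_canonical_form} and Lemma \ref{lem_conn_and_rep_trivial}), then verify at each puncture that the formal monodromy $\exp(-2\pi\sqrt{-1}\nabla_{\beta_x})$ actually lands in $P_{\gamma_x}$. But the local verification --- which you yourself flag as the delicate point --- is where your argument breaks down. You split $h_x=\exp(-2\pi\sqrt{-1}s_{\beta_x})\exp(-2\pi\sqrt{-1}Y_{\beta_x})$ and then (i) assert the unipotent factor ``lies in the parabolic determined by $Y_{\beta_x}$,'' which is not the statement needed (membership in the Jacobson--Morozov or any other parabolic attached to $Y_{\beta_x}$ does not give membership in $P_{\gamma_x}$), and (ii) justify the toral factor by claiming $r(\beta_x)-r(s_{\beta_x})\in\mathbb{Z}_{>0}$ whenever $r(\gamma_x)>0$. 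That integrality claim is unjustified and false in general: $r(\gamma_x)>0$ only says $r(\beta_x)-\mathrm{Re}\,r(s_{\beta_x})>0$, with no reason for the difference to be an integer. It is also unnecessary --- since $s_{\beta_x}\in\mathfrak{t}$, the factor $\exp(-2\pi\sqrt{-1}s_{\beta_x})$ lies in $T\subseteq P_{\gamma_x}$ automatically. So the easy factor is overworked and the hard factor is not handled.

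The paper's resolution of the hard factor is a one-line commutator computation you are missing: since $[s_{\beta_x},Y_{\beta_x}]=0$, writing $Y_{\beta_x}$ in root spaces forces $r(s_{\beta_x})=0$ (hence $r(\bar s_{\beta_x})=0$) on every root supporting $Y_{\beta_x}$, so $[Y_{\beta_x},\gamma_x]=[Y_{\beta_x},\beta_x-\tfrac12(s_{\beta_x}+\bar s_{\beta_x})]=[Y_{\beta_x},\beta_x]$. Thus the $\mathrm{ad}(\gamma_x)$-grading and the $\mathrm{ad}(\beta_x)$-grading agree on $Y_{\beta_x}$, and $Y_{\beta_x}\in\mathfrak{p}_{\beta_x}$ (which Proposition \ref{prop_canonical_form_parah} supplies) immediately gives $Y_{\beta_x}\in\mathfrak{p}_{\gamma_x}$, hence $h_x\in H_x\cap P_{\gamma_x}$. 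You should replace your items (i) and (ii) with this argument; the rest of your assembly (gluing the local data, functoriality from Theorem \ref{thm_Boal_equiv}) is consistent with the paper.
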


\begin{proof}
Compared to the case of trivial weights, the formal monodromies of  $\boldsymbol\gamma$-filtered Stokes $G$-local systems are taken from parabolic subgroups $P_{\gamma_x}$, while the residues of merohoric $\mathcal{G}_{\boldsymbol\beta}$-connections are elements in the Lie algebra $\mathfrak{p}_{\beta_x}$. We have to check whether formal monodromies and residues are related naturally. Now we work around a puncture $x \in \boldsymbol{D}$. Given a merohoric $\mathcal{G}_{\beta}$-connection with irregular type $Q$ and residue $\nabla_\beta$, it can be regarded as a meromorphic $G$-connection with irregular type $Q$ and residue in $\mathfrak{p}_{\beta}$ by Corollary \ref{cor_canonical_form}. Then, it corresponds to a Stokes $G$-local system $\rho$ by Theorem \ref{thm_Boal_equiv}. Now we consider its residue $\nabla_\beta$. By Proposition \ref{prop_canonical_form_parah}, $\nabla_\beta = s_\beta + Y_\beta \in \mathfrak{h} \cap \mathfrak{p}_\beta$, and $[s_\beta, Y_\beta]=0$. Then, 
\begin{align*}
[Y_\beta,\gamma] = [Y_\beta, \beta - \frac{1}{2}(s_\beta + \bar{s}_\beta)] = [Y_\beta, \beta].
\end{align*}
This gives rise to
\begin{align*}
Y_\beta \in  \mathfrak{h}, \quad Y_\beta \in \mathfrak{p}_\gamma,
\end{align*}
i.e. $\exp(-2\pi \sqrt{-1}\nabla_\beta) \in H \cap P_\gamma$. Thus, the corresponding Stokes $G$-local system $\rho$ is actually a $\gamma$-filtered Stokes $G$-local system. Therefore, by Theorem \ref{thm_Boal_equiv}, Lemma \ref{lem_conn_and_rep_trivial} and Corollary \ref{cor_canonical_form}, the corollary follows directly.
\end{proof}

Note that we first fix a maximal torus $T$ and then introduce weights and corresponding parabolic subgroups. Therefore, $T \subseteq L_{\theta_x} \subseteq P_{\theta_x}$. If a collection of parabolic subgroups $\boldsymbol{P}=\{P_x, x \in \boldsymbol{D}\}$ is invariant and compatible with a $\boldsymbol\theta$-filtered Stokes $G$-local system, then the maximal torus $T$ (of $L_{\theta_x}$) is included in $P_x$ for any $x \in \boldsymbol{D}$. Since $P_x$ are conjugate to each other, $P_x=P_{x'}$ for any $x, x' \in \boldsymbol{D}$. In this case, all parabolic subgroups in $\boldsymbol{P}$ are the same. Thus, we modify the compatibility condition for filtered Stokes $G$-local systems as follows.
\begin{defn}
	Let $P$ be a proper parabolic subgroup of $G$, and let $\rho$ be a $\boldsymbol\theta$-filtered Stokes $G$-local system. We say that $P$ is \emph{invariant}, if $T \subseteq P$. An invariant proper parabolic subgroup $P$ is \emph{compatible} with $\rho$, if we have
	\begin{align*}
	\rho(\gamma) P \rho(\gamma)^{-1} = P
	\end{align*}
	for any path $\gamma$ in $X_{\boldsymbol{Q}}$ from $b_i$ to $b_j$ (for arbitrary $i,j$). 
\end{defn}

Since $P$ is a proper parabolic subgroup, its normalizer is exactly itself. Thus, if we have 
\begin{align*}
\rho(\gamma) P \rho(\gamma)^{-1} = P, 
\end{align*}
then $\rho(\gamma) \in P$. This basic property shows that if $P$ is \emph{invariant and compatible} with $\rho$, then all elements $A_i, B_i, C_{x}, h_{x}, S_{x,j}$ are included in $P$. Therefore, if there is a proper parabolic subgroup $P$ compatible with $\rho$, the representation $\rho$ naturally induces a filtered Stokes $P$-local system, which will be denoted by $\rho_P$.

\begin{defn}
	Let $\rho$ be a $\boldsymbol\theta$-filtered Stokes $G$-local system, and let $P$ be a parabolic subgroup of $G$ compatible with $\rho$. Taking a character $\chi: P \rightarrow \mathbb{C}^*$, we define the \emph{degree of a $\boldsymbol\theta$-filtered Stokes $G$-local system $\rho$} as 
	\begin{align*}
	\deg^{\rm loc} \rho(P,\chi) := \langle \boldsymbol\theta, \chi \rangle.
	\end{align*}
	A $\boldsymbol\theta$-filtered Stokes $G$-local system $\rho$ is of \emph{degree zero}, if for any character $\chi$ of $G$, we have $\langle \boldsymbol\theta, \chi \rangle=0$.
\end{defn}

\begin{defn}\label{defn_stab_betti}
A $\boldsymbol\theta$-filtered Stokes $G$-local system is \emph{$R$-stable} (resp. \emph{$R$-semistable}), if for
	\begin{itemize}
		\item any invariant and compatible proper parabolic subgroup $P \subseteq G$,
		\item any nontrivial anti-dominant character $\chi: P \rightarrow \mathbb{C}^*$, which is trivial on the center of $P$,
	\end{itemize}
	one has
	\begin{align*}
	\deg^{\rm loc} \rho(P,\chi) > 0  \quad  (\text{resp.} \geq 0). 
	\end{align*}
\end{defn}

\begin{lem}\label{lem_stokes_trivial}
When the weights $\boldsymbol\theta$ are trivial, a $\boldsymbol\theta$-filtered Stokes $G$-local system is $R$-stable if and only if it is irreducible. Thus, the $R$-stability condition in this case is equivalent to the stability condition considered in \cite[Theorem 9.4]{Boalch2014}.
\end{lem}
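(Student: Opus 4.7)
The plan is to unfold the definitions and observe that with trivial weights the $R$-stability inequality collapses into a pure non-existence statement, which can then be matched with Boalch's notion of irreducibility. First I would note that when $\boldsymbol\theta = 0$, the local degree of Definition \ref{defn_stab_betti} is $\deg^{\rm loc}\rho(P,\chi) = \langle 0, \chi\rangle = 0$ identically. Consequently the strict inequality $\deg^{\rm loc}\rho(P,\chi) > 0$ can never be witnessed, and $R$-stability of $\rho$ is equivalent to the statement that no invariant and compatible (in the modified sense) proper parabolic $P \subseteq G$ carries a nontrivial anti-dominant character trivial on the center of $P$. Since every proper parabolic of a connected reductive group admits such a character (for instance, the negative of a strictly dominant character of the Levi, pulled back to $P$), I can eliminate $\chi$ and reduce $R$-stability to the pure assertion that there is no invariant and compatible (modified sense) proper parabolic $P \subseteq G$.

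The second step is to identify this non-existence condition with Boalch's irreducibility (Definition \ref{defn_Boal_irre_rep}). One direction is immediate: given an invariant compatible proper parabolic $P$ in the modified sense, the constant collection $\boldsymbol{P} := \{P\}_{x \in \boldsymbol{D}}$ is both invariant (because $T \subseteq P$, and $T$ serves as a maximal torus of each $H_x$ under the unramified hypothesis $Q_x \in \mathfrak{t}$) and compatible with $\rho$ in Boalch's sense (since $\rho(\gamma) P \rho(\gamma)^{-1} = P$ holds for every path). Conversely, starting from a Boalch-compatible invariant proper collection $\boldsymbol{P} = \{P_x\}$, the identities $\rho(\gamma_{ij}) P_{x_i} \rho(\gamma_{ij})^{-1} = P_{x_j}$ exhibit all $P_x$ as $G$-conjugate while invariance forces the common fixed torus $T$ to lie inside each of them; by the discussion preceding the modified compatibility definition in the paper, this forces $P_x = P_{x'}$ for all $x, x' \in \boldsymbol{D}$. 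Taking $P$ to be the common parabolic then recovers an invariant compatible proper parabolic in the modified sense.

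Combining the two steps, $R$-stability with trivial weights is equivalent to the non-existence of an invariant proper compatible collection of parabolics, i.e.\ to irreducibility in Boalch's sense. By \cite[Theorem 9.4]{Boalch2014}, this is precisely the stability condition governing the moduli space of Stokes $G$-local systems, yielding both halves of the lemma at once.

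The main obstacle is the converse implication in the second step, namely the claim that mutually $G$-conjugate parabolics all containing the fixed common maximal torus $T$ must literally coincide in this setup; this is not true for arbitrary conjugate parabolics sharing a maximal torus, and one genuinely needs the rigidity supplied by the construction, in particular the fact that each $H_x = Z_G(Q_x)$ contains $T$ as a maximal torus so that the conjugating elements $\rho(\gamma_{ij})$ can be analyzed within $N_G(T)$ together with the Stokes-compatibility at each puncture. I would handle this step by directly invoking the paper's prior argument rather than reproving it, and otherwise the proof is formal unfolding of definitions.
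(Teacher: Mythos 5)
Your proof follows essentially the same route as the paper's: with trivial weights the local degree $\deg^{\rm loc}\rho(P,\chi)$ vanishes identically, so $R$-stability amounts to the non-existence of an invariant compatible proper parabolic, which is then identified with Boalch's irreducibility by passing between a single parabolic $P$ and the constant collection $\{P\}_{x\in\boldsymbol{D}}$. The paper's own proof is terser and leaves implicit both the existence of a nontrivial anti-dominant character trivial on the center of any proper parabolic and the collapse of a Boalch collection to a single parabolic (relying, exactly as you do, on the discussion preceding the modified compatibility definition for the latter), so your version is simply a more explicit rendering of the same argument, and the subtlety you flag about conjugate parabolics containing $T$ is one the paper glosses over in the same way.
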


\begin{proof}
If the representation $\rho$ is irreducible, there is no proper parabolic subgroup $P$ compatible with $\rho$; this implies $\rho$ is $R$-stable. For the other direction, we suppose that $\rho$ is $R$-stable. If there exists a proper parabolic subgroup $P$ compatible with $\rho$, then we have
	\begin{align*}
	\deg^{\rm loc} \rho(P,\chi) = 0,
	\end{align*}	
for any character $\chi: P \rightarrow \mathbb{C}^*$ because the weights $\boldsymbol\theta$ are trivial. This contradicts the fact that $\rho$ is $R$-stable.
\end{proof}

For general weights, we introduce the following category:
\begin{itemize}
\item $\mathcal{C}_{\rm B}(X_{\boldsymbol{Q}},G,\boldsymbol{\gamma}, M_{\boldsymbol\gamma} )$: the category of $R$-stable $\boldsymbol\gamma$-filtered Stokes $G$-local systems on $X_{\boldsymbol{Q}}$ of degree zero such that the Levi factors of formal monodromies are $M_{\boldsymbol\gamma}$ around punctures.  
\end{itemize}
Given the following table
\begin{center}
	\begin{tabular}{|c|c|c|}
		\hline
		\rule{0pt}{2.6ex} &\ \ \ de Rham\ \ \ &\ Betti\  \\[0.5ex]
		\hline
		\rule{0pt}{2.6ex} weights & $\beta$ & $\gamma=\beta - \frac{1}{2}(s_\beta + \bar{s}_\beta)$  \\[0.7ex]
		\hline
		\rule{0pt}{2.6ex} residues $\backslash$ monodromies & $\nabla_{\beta}$ & $M_\gamma = \exp(-2\pi \sqrt{-1} \nabla_\beta)$ \\[0.7ex]
		\hline
		\rule{0pt}{2.6ex} irregular types & $Q$ & $Q$ \\[0.7ex]
		\hline
	\end{tabular}
\end{center}
let $(\mathcal{V},\nabla)$ be a merohoric $\mathcal{G}_{\boldsymbol\beta}$-torsor in $\mathcal{C}_{\rm dR}(X,\mathcal{G}_{\boldsymbol\beta}, \nabla_{\boldsymbol\beta}, \boldsymbol{Q} )$, and denote by $\rho$ the corresponding $\boldsymbol\gamma$-filtered Stokes $G$-local system on $X_{\boldsymbol{Q}}$ by Corollary \ref{cor_con_and_rep}. Given a proper parabolic subgroup $P$, if $P$ is compatible with $\rho$, then by Theorem \ref{thm_Boal_equiv} and Corollary \ref{cor_con_and_rep}, we obtain a merohoric $\mathcal{P}_{\boldsymbol\beta}$-torsor $(\mathcal{V}_{\varsigma},\nabla_{\varsigma})$, which is induced by a compatible reduction of structure group $\varsigma: X\rightarrow \mathcal{V}/\mathcal{P}_{\boldsymbol\beta}$. The argument also holds in the other direction.

\begin{rem}
    Although Theorem \ref{thm_Boal_equiv} is given for complex reductive groups, the correspondence involving reductive groups $G$ can be generalized to the case involving parabolic subgroups $P \subseteq G$ with the same argument as in \cite[Appendix]{Boalch2014}.
\end{rem}

\begin{lem}
	With the same setup as above, let $P$ be a proper parabolic subgroup compatible with $\rho$. Denote by $(\mathcal{V}_{\varsigma},\nabla_{\varsigma})$ the corresponding merohoric $\mathcal{P}_{\boldsymbol\beta}$-torsor. Let $\chi$ be a character of $P$, and denote by $\kappa$ the corresponding character of $\mathcal{P}_{\boldsymbol\beta}$. Then, we have
	\begin{align*}
	parh \deg \mathcal{V}(\varsigma,\kappa) = \deg^{\rm loc} \rho(P,\chi).
	\end{align*}
\end{lem}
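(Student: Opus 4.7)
My plan is to reduce the identity to local boundary computations at each puncture. First I would invoke Proposition \ref{prop_ana=alg Higgs and conn} to convert the parahoric degree into the analytic degree $parh\deg\mathcal{V}(\varsigma,\kappa) = \deg^{\rm an}V(h,\sigma,\chi) = \frac{\sqrt{-1}}{2\pi}\int_{X_{\boldsymbol{D}}}\chi_*F_{h_\sigma}$. Since $\chi_*$ kills commutators, the pushforward curvature $\chi_*F_{h_\sigma} = d(\chi_*\omega_{h_\sigma})$ is locally exact away from the punctures, and Stokes' theorem on $X_\epsilon := X\setminus\bigsqcup_x B_\epsilon(x)$ reduces the integral to a sum of boundary integrals $\oint_{\partial B_\epsilon(x)}\chi_*\omega_{h_\sigma}$ around small counterclockwise circles near each puncture.

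Next I would compute each boundary integral using the explicit model metric $h_0$ from Section \ref{subsect_local_study}. The associated Chern connection 1-form involves the $|z|^{2\beta_x}$-factor from $h_0$, the residue components $s_{\beta_x},\bar s_{\beta_x}$, the $\mathfrak{sl}_2$-triple $(X_{\beta_x},H_{\beta_x},Y_{\beta_x})$, and the irregular type $Q_x$. Crucially, $\chi_*$ annihilates $X_{\beta_x},H_{\beta_x},Y_{\beta_x}$, since each lies in $[\mathfrak{p}_{\beta_x},\mathfrak{p}_{\beta_x}]$ (using $H_{\beta_x}=[X_{\beta_x},Y_{\beta_x}]$ and the $\mathfrak{sl}_2$-relations) and every character of $P$ factors through the abelianization of $\mathfrak{p}$; and since $Q_x\in\mathfrak{t}(K)/\mathfrak{t}(R)$ has only poles of order $\geq 2$, the term $\chi_*(dQ_x)$ is exact on the punctured disk and so contributes no residue. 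Using $\oint\frac{dz}{z}=2\pi\sqrt{-1}$ and $\oint\frac{d\bar z}{\bar z}=-2\pi\sqrt{-1}$, the surviving contribution at each $x$ simplifies to $\langle\beta_x,\chi\rangle-\frac{1}{2}(\chi_*(s_{\beta_x})+\chi_*(\bar s_{\beta_x}))$, which by the defining relation $\gamma_x=\beta_x-\frac{1}{2}(s_{\beta_x}+\bar s_{\beta_x})$ equals $\langle\gamma_x,\chi\rangle$.

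Summing over all punctures then yields $\sum_x\langle\gamma_x,\chi\rangle=\langle\boldsymbol\gamma,\chi\rangle=\deg^{\rm loc}\rho(P,\chi)$, which is the desired identity. The main obstacle will be the careful bookkeeping in the boundary computation, namely tracking which parts of the Chern connection form of $h_0$ contribute the parahoric weight $\langle\beta_x,\chi\rangle$ (coming from the $|z|^{2\beta_x}$-factor of $h_0$) versus the residue shift $\frac{1}{2}\chi_*(s_{\beta_x}+\bar s_{\beta_x})$ (coming from the semisimple part of the canonical form provided by Proposition \ref{prop_canonical_form_parah}), and verifying via the explicit gauge transformations of Section \ref{subsect_local_study} that these combine to exactly the Betti weight pairing $\langle\gamma_x,\chi\rangle$ with the correct sign.
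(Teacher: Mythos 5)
Your overall strategy (pass to the analytic degree and localize at the punctures) is the right one and is close in spirit to the paper's proof, which follows Simpson's Lemma 6.5: it writes $\deg^{\rm an}V(h,\sigma,\chi)=\deg L+\langle\boldsymbol\beta,\chi\rangle$, where $L$ is the extension of the line bundle $\chi_*E_\sigma$ over $X$, and then computes $\deg L$ by the residue theorem applied to the induced meromorphic connection $\chi_*d'_\sigma$ on $L$. However, your proposal has a genuine gap in the localization step. Writing $\chi_*F_{h_\sigma}=d(\chi_*\omega_{h_\sigma})$ and applying Stokes' theorem on $X_\epsilon$ requires a \emph{global} primitive, i.e.\ a global trivialization of the line bundle $\chi_*E_\sigma$ over $X_\epsilon$; such a trivialization differs near each puncture from the local model frame $\mathbf{e}$ by a transition function, and the sum of the winding numbers of these transition functions is exactly $\deg L$. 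If you instead compute each boundary integral in the local frame $\mathbf{e}$ used in \S\ref{subsect_local_study}, the Chern connection form there is
\begin{align*}
D_{h_0}=d+\Big(\beta_x-Y_{\beta_x}+\frac{2H_{\beta_x}}{\ln|z|^2}+\frac{2X_{\beta_x}}{(\ln|z|^2)^2}\Big)\frac{dz}{z},
\end{align*}
and after applying $\chi_*$ and integrating over $\partial B_\epsilon(x)$ you obtain only $\langle\beta_x,\chi\rangle$: the semisimple residue $s_{\beta_x}$ simply does not appear in this form, so the claimed per-puncture contribution $\langle\beta_x,\chi\rangle-\tfrac{1}{2}\chi_*(s_{\beta_x}+\bar s_{\beta_x})$ cannot come out of that boundary integral alone. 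The missing term $-\langle{\rm Re}(s_{\beta_x}),\chi\rangle$ is precisely the contribution of $\deg L$, and to produce it you must bring in the \emph{meromorphic flat} connection $\chi_*\nabla$ on $L$ (whose residue at $x$ is $\langle s_{\beta_x},\chi\rangle$, the $dQ_x$ and nilpotent parts contributing nothing) and invoke the residue theorem, as the paper does. Concretely: either keep the two ingredients separate as in the paper, or redo your boundary computation in a flat frame $f=\mathbf{e}\cdot z^{-s_{\beta_x}-Y_{\beta_x}}e^{-Q_x}$, for which $|f|^2_{h_0}\sim|z|^{2\beta_x-(s_{\beta_x}+\bar s_{\beta_x})}$ and the boundary integral genuinely produces $\langle\gamma_x,\chi\rangle$; as written, your argument double-counts nothing but under-counts by $\deg L$.

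Two smaller points: your remark that $\chi_*$ kills $X_{\beta_x},H_{\beta_x},Y_{\beta_x}$ needs the observation that these are nilpotent, respectively a commutator, \emph{inside} $\mathfrak{p}$ (only $Y_{\beta_x}$ is automatically in $\mathfrak{p}_{\beta_x}\cap\mathfrak{p}$ by Proposition \ref{prop_canonical_form_parah} and compatibility; for $X_{\beta_x}$ and $H_{\beta_x}$ one should argue via the pushed-forward curvature being well defined rather than termwise). Your observation that $dQ_x$ has poles of order at least two and hence no residue is correct and is also implicitly used in the paper's residue-theorem step.
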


\begin{proof}
    The proof is similar to \cite[Lemma 6.5]{Simp}. Denote by $(V,d',h)$ the corresponding metrized $G$-connection on $X_{\boldsymbol{D}}$ under the functor $\Xi_{\rm dR}$. Denote by $\sigma$ and $\chi$ the corresponding reduction of structure group and character respectively (see the proof of Proposition \ref{prop_ana=alg Higgs and conn}). Then, $\chi_* E_\sigma$ gives a line bundle on $X_{\boldsymbol{D}}$. The line bundle $\chi_* E_\sigma$ extends to a line bundle $L$ on $X$ under the functor $\Xi$ in \S\ref{subsect_func_Xi} and the connection $\chi_* d'_{\sigma}$ can be extended to a meromorphic connection $\nabla_L$ on the line bundle. By definition of the analytic degree, we have
    \begin{align*}
	    \deg^{\rm an} V(h,\sigma,\chi) = \deg L + \langle \boldsymbol\beta,\chi \rangle.
    \end{align*}
    Moreover,
    \begin{align*}
        \deg L = \langle {\rm Re}(s_{\boldsymbol\beta}), \chi \rangle
    \end{align*}
    by the residue theorem. Therefore,
    \begin{align*}
	    parh \deg \mathcal{V}(\varsigma,\kappa) & = \deg^{\rm an} V(h,\sigma,\chi) \\
	    & =  \deg L + \langle \boldsymbol\beta,\chi \rangle =  \langle \boldsymbol\beta,\chi \rangle + \langle {\rm Re}(s_{\boldsymbol\beta}), \chi \rangle \\
	    & = \langle \boldsymbol{\gamma}, \chi \rangle = \deg^{\rm loc} \rho(P,\chi).
    \end{align*}
\end{proof}

As a direct result, the stability conditions are also equivalent:
\begin{prop}
	Let $(\mathcal{V},\nabla)$ be an element in $\mathcal{C}_{\rm dR}(X,\mathcal{G}_{\boldsymbol\beta}, \nabla_{\boldsymbol\beta}, \boldsymbol{Q} )$. Denote by $\rho$ the corresponding $\boldsymbol\gamma$-filtered Stokes $G$-local system of degree zero on $X_{\boldsymbol{Q}}$. Then, $(\mathcal{V},\nabla)$ is $R_h$-stable if and only if $\rho$ is $R$-stable.
\end{prop}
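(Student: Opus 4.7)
The plan is to deduce the proposition directly from the preceding degree-equality lemma once we establish that the bookkeeping on both sides (compatible reductions, invariant compatible parabolics, and characters) matches bijectively. Concretely, the $R$-stability of $(\mathcal{V},\nabla)$ is tested against triples $(P,\varsigma,\kappa)$ consisting of a proper parabolic $P\subseteq G$, a compatible reduction $\varsigma\colon X\to \mathcal{V}/\mathcal{P}_{\boldsymbol{\beta}}$ and a nontrivial anti-dominant character $\kappa$ of $\mathcal{P}_{\boldsymbol{\beta}}$ trivial on the center, while the $R$-stability of $\rho$ is tested against triples $(P,\chi)$ where $P$ is an invariant proper parabolic compatible with $\rho$ and $\chi$ is an analogous character of $P$. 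So the first step is to exhibit a bijection between these two families of testing data.

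For the first step, I would argue that via the functor $\Xi_{\rm dR}$ and Theorem \ref{thm_Boal_equiv} (extended from $G$ to a parabolic $P\subseteq G$, as the remark preceding the lemma notes), a compatible reduction $\varsigma$ of $(\mathcal{V},\nabla)$ to $\mathcal{P}_{\boldsymbol\beta}$ corresponds to an invariant proper parabolic subgroup $P\subseteq G$ compatible with $\rho$, in the sense that the induced filtered Stokes $P$-local system $\rho_P$ is exactly the Betti avatar of $(\mathcal{V}_\varsigma,\nabla_\varsigma)$. The ``invariance'' condition $T\subseteq P$ appearing in Definition \ref{defn_stab_betti} is automatic because every weight $\beta_x$ factors through the fixed maximal torus $T\subseteq L_{\beta_x}\subseteq P_{\beta_x}$, so any parabolic arising from such a reduction contains $T$. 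On the character side, the natural identification ${\rm Hom}(\mathcal{P}_{\boldsymbol\beta},\mathbb{G}_m)={\rm Hom}(P,\mathbb{C}^*)$ from \cite[Lemma 4.2]{KSZ2parh} gives $\kappa\leftrightarrow\chi$; the anti-dominance and center-triviality conditions are intrinsic to $\chi\colon P\to\mathbb{C}^*$, so they transfer without modification.

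For the second step, I invoke the preceding lemma to conclude that under this correspondence,
\begin{equation*}
parh\deg \mathcal{V}(\varsigma,\kappa)\;=\;\deg^{\rm loc}\rho(P,\chi).
\end{equation*}
Thus positivity (resp.\ non-negativity) of the left-hand side for all admissible triples $(P,\varsigma,\kappa)$ is equivalent to positivity (resp.\ non-negativity) of the right-hand side for all admissible triples $(P,\chi)$. Comparing Definition \ref{defn_alg_stab} with Definition \ref{defn_stab_betti} then gives the desired equivalence of stability conditions.

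The main subtlety I expect is the bijection in the first step rather than the numerical comparison in the second: one must check carefully that every invariant proper parabolic $P$ compatible with $\rho$ in the Betti sense genuinely arises from a global algebraic reduction $\varsigma\colon X\to\mathcal{V}/\mathcal{P}_{\boldsymbol\beta}$ that is \emph{$\nabla$-compatible}, and vice versa. This is where Theorem \ref{thm_Boal_equiv} (applied with structure group $P$ in place of $G$) plays the essential role, together with the fact that a merohoric $\mathcal{G}_{\boldsymbol\beta}$-connection with irregular type $\boldsymbol Q$ whose residue lies in $\mathfrak{p}_{\boldsymbol\beta}$ reduces to a $\mathcal{P}_{\boldsymbol\beta}$-connection with the same irregular type, as ensured by Proposition \ref{prop_canonical_form_parah} and Corollary \ref{cor_canonical_form}. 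Once this compatibility is verified, the proof reduces to an immediate comparison of definitions.
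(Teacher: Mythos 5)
Your proposal matches the paper's argument: the paper establishes (in the paragraph and remark preceding the degree lemma) exactly the bijection you describe between compatible reductions $\varsigma$ and invariant compatible proper parabolics $P$ via Theorem \ref{thm_Boal_equiv} extended to parabolic structure groups, identifies characters via \cite[Lemma 4.2]{KSZ2parh}, and then deduces the proposition ``as a direct result'' of the equality $parh\deg\mathcal{V}(\varsigma,\kappa)=\deg^{\rm loc}\rho(P,\chi)$ by comparing Definitions \ref{defn_alg_stab} and \ref{defn_stab_betti}. The subtlety you flag in the first step is precisely the point the paper addresses there, so your proof is correct and follows essentially the same route.
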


Thus, we have the main result in this section.
\begin{thm}\label{thm_dR_Betti}
	The categories $\mathcal{C}_{\rm dR}(X,\mathcal{G}_{\boldsymbol\beta}, \nabla_{\boldsymbol\beta}, \boldsymbol{Q} )$ and $\mathcal{C}_{\rm B}(X_{\boldsymbol{Q}},G,\boldsymbol{\gamma}, M_{\boldsymbol\gamma} )$ are equivalent.
\end{thm}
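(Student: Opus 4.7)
The plan is to argue that the main work has already been carried out in the build-up to the theorem, so the proof amounts to assembling the pieces into a single categorical statement. First I would observe that Corollary \ref{cor_con_and_rep} already provides an equivalence between the category of merohoric $\mathcal{G}_{\boldsymbol\beta}$-connections on $X$ with irregular types $\boldsymbol{Q}$ and Levi factors of residues $\nabla_{\boldsymbol\beta}$, and the category of $\boldsymbol\gamma$-filtered Stokes $G$-local systems on $X_{\boldsymbol{Q}}$ with Levi factors of formal monodromies $M_{\boldsymbol\gamma}$. Thus, to prove Theorem \ref{thm_dR_Betti}, it suffices to show that this equivalence restricts to one between the $R$-stable, degree-zero subcategories on each side.

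Next I would verify that the equivalence identifies the relevant test data for the stability condition. On the de Rham side, a $\nabla$-compatible reduction $\varsigma : X \to \mathcal{V}/\mathcal{P}_{\boldsymbol\beta}$ corresponds, via the parabolic version of Theorem \ref{thm_Boal_equiv} (see the remark immediately preceding the theorem), to an invariant proper parabolic subgroup $P \subseteq G$ compatible with the associated Stokes representation $\rho$: indeed, such a $\varsigma$ gives a merohoric $\mathcal{P}_{\boldsymbol\beta}$-sub-torsor with a meromorphic connection of the same irregular type and Levi factor of the residue, and applying the parabolic Riemann--Hilbert correspondence yields a filtered Stokes $P$-local system whose underlying $G$-local system is $\rho$; conversely, compatibility of $P$ with $\rho$ produces a filtered Stokes $P$-local system, which reverses back to a $\nabla$-compatible reduction. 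Characters of $\mathcal{P}_{\boldsymbol\beta}$ are in bijection with characters of $P$ by \cite[Lemma 4.2]{KSZ2parh}, so anti-dominant characters correspond under this bijection.

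Combining the degree equality
\begin{equation*}
parh\deg \mathcal{V}(\varsigma,\kappa) = \deg^{\rm loc}\rho(P,\chi)
\end{equation*}
established in the preceding lemma with the preceding proposition then gives the desired statement: both the degree-zero condition (applied to characters of $G$ via the trivial reduction) and the $R$-stability condition (applied to nontrivial anti-dominant characters trivial on the center) transfer faithfully across the correspondence. Hence the functor of Corollary \ref{cor_con_and_rep} restricts to the required equivalence
\begin{equation*}
\mathcal{C}_{\rm dR}(X,\mathcal{G}_{\boldsymbol\beta}, \nabla_{\boldsymbol\beta}, \boldsymbol{Q}) \;\simeq\; \mathcal{C}_{\rm B}(X_{\boldsymbol{Q}},G,\boldsymbol{\gamma}, M_{\boldsymbol\gamma}).
\end{equation*}

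The main obstacle I anticipate is not technical but bookkeeping: one must ensure that the matching of ``invariant and compatible'' in the Betti setup (where all parabolic subgroups in the collection coincide once the maximal torus is contained in $P$) lines up cleanly with the more flexible reduction-of-structure-group viewpoint on the de Rham side. The simplification of the compatibility definition carried out in the previous subsection is precisely designed to make this bijection hold, and once this is invoked the stability equivalence becomes immediate from the degree equality.
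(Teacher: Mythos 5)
Your proposal is correct and follows essentially the same route as the paper: the paper likewise deduces Theorem \ref{thm_dR_Betti} by combining Corollary \ref{cor_con_and_rep} with the matching of compatible reductions to invariant compatible parabolic subgroups (via the parabolic extension of Theorem \ref{thm_Boal_equiv}), the character bijection, and the degree equality $parh\deg\mathcal{V}(\varsigma,\kappa)=\deg^{\rm loc}\rho(P,\chi)$, which together transfer the degree-zero and $R$-stability conditions across the equivalence.
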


\begin{rem}\label{rem_}
	In this remark, we consider the special case when all irregular types are trivial, i.e. $Q_x=0$ for all $x \in \boldsymbol{D}$. One would expect that the result reduces to the \emph{tame} case naturally. However, it does not cover the full correspondence in the tame case as studied in \cite{HKSZ}. The problem comes from the local calculation of meromorphic $G$-connections and the Riemann--Hilbert correspondence. If the irregular type $Q$ is not trivial, the last step in Remark \ref{rem_cano_form_proof} shows that a meromorphic $G$-connection with irregular type $Q$ is gauge equivalent to a meromorphic $G$-connection in canonical form under the action of $G(R)$, i.e. the holomorphic part vanishes. However, this is not true when $Q$ is trivial, and the holomorphic part may not vanish and can include nilpotent elements as the coefficients. This property thus results in the failure of the Riemann--Hilbert correspondence (as a one-to-one correspondence). Nonetheless, when the irregular type $Q$ is trivial, Boalch has established such a one-to-one correspondence by introducing parahoric objects \cite{Bo}. 
\end{rem}

\bigskip
\noindent\small{\textsc{Institut F\"ur Mathematik, Ruprecht-Karls-Universit\"{a}t Heidelberg}\\
		 Im Neuenheimer Feld 205, Heidelberg 69120, Germany}\\
\emph{E-mail address}:  \texttt{pfhwang@mathi.uni-heidelberg.de}

\bigskip
\noindent\small{\textsc{Department of Mathematics, South China University of Technology}\\
		381 Wushan Rd, Tianhe Qu, Guangzhou, Guangdong, China}\\
\emph{E-mail address}:  \texttt{hsun71275@scut.edu.cn}

\end{document}